\numberwithin{equation}{section}
\theoremstyle{plain}
\newtheorem{thm}{Theorem}[section]
\newtheorem{cor}[thm]{Corollary}
\newtheorem{lem}[thm]{Lemma}
\newtheorem{prop}[thm]{Proposition}
\theoremstyle{definition}
\newtheorem{assumption}[thm]{Assumption}
\newtheorem{rem}[thm]{Remark}
\newcommand{\B}{\mathscr{B}}
\newcommand{\eps}{\varepsilon}
\newcommand{\N}{\mathbb{N}}
\newcommand{\R}{\mathbb{R}}
\newcommand{\sn}{\mathcal{S}}
\newcommand{\C}{\mathbb{C}}
\newcommand{\K}{\mathscr{K}}
\newcommand{\U}{\mathcal{U}}
\newcommand{\idd}{\mathbf{1}}
\newcommand{\id}{I}
\newcommand{\x}{\langle x \rangle }
\newcommand{\cH}{\mathcal{H}}
\newcommand{\jap}[1]{\langle{#1}\rangle}
\newcommand{\vp}{\varphi}
\newcommand{\w}{{\rm w}}
\newcommand{\cG}{\mathcal{G}}
\DeclareMathOperator{\Div}{div}
\DeclareMathOperator{\dom}{dom}
\DeclareMathOperator{\im}{Im}
\DeclareMathOperator{\ran}{ran}
\DeclareMathOperator{\re}{Re}
\DeclareMathOperator{\sgn}{sgn}
\newcommand{\cS}{\mathcal{S}(\R^3)}
\newcommand{\abs}[1]{\lvert{#1}\rvert}
\newcommand{\norm}[1]{\lVert{#1}\rVert}
\newcommand{\ket}[1]{\lvert{#1}\rangle}
\newcommand{\bra}[1]{\langle{#1}\rvert} 
\newcommand{\ip}[2]{\langle{#1},{#2}\rangle}
\newcommand{\cO}{\mathcal{O}}
\DeclareMathOperator{\rt}{curl}
\DeclareMathOperator{\nul}{null}
\DeclareMathOperator{\rank}{rank}
\DeclareMathOperator{\diag}{diag}
\newcommand{\myW}{\mathsf{W}}
\newcommand{\myw}{\mathsf{w}}
\newcommand{\myU}{\mathsf{U}}
\newcommand{\oneone}{\begin{bsmallmatrix} 1\\1\end{bsmallmatrix}}
\DeclareMathOperator{\myspan}{span}
\title{Resolvent expansions of 3D magnetic Schr\"odinger operators and Pauli operators}
\author{Arne Jensen\footnote{Department of Mathematical Sciences, Aalborg
 University, Skjernvej 4A, DK-9220 Aalborg~\O{}, Denmark, 
\texttt{matarne@math.aau.dk}} \ and  Hynek Kova\v{r}\'{\i}k\footnote{DICATAM, Sezione di Matematica, Università degli studi di Brescia, Via Branze 38, Brescia, 25123, Italy, \texttt{hynek.kovarik@unibs.it}}}
\begin{document}

\maketitle

\smallskip

\noindent {\bf MSC 2020:}  35Q40, 35P05, 35J10

\begin{abstract}
We obtain asymptotic resolvent expansions at the threshold of the essential spectrum for magnetic Schr\"odinger and Pauli operators in dimension three. These operators 
are treated as perturbations of the Laplace operator in $L^2(\R^3)$ and $L^2(\R^3;\C^2)$, respectively. 
The main novelty of our 
approach is to  show that the relative perturbations, which are first order differential operators, can be factorized in suitably chosen auxiliary spaces. This allows us to derive 
the desired  asymptotic expansions of the resolvents around zero. We then calculate their leading and sub-leading terms explicitly. 
Analogous factorization schemes for more general perturbations, including e.g.~finite rank perturbations, are discussed as well. 
\end{abstract}

\section{Introduction}
\label{sec-intro}

The purpose of this paper is to prove asymptotic expansions around the threshold zero of the resolvents of magnetic Schr\"{o}dinger operators and Pauli operators in dimension three. Besides being of interest on their own, resolvent expansions are also important for treating the low energy part in the proof  of dispersive estimates for the operators we consider. As far as we know the results obtained here are the first to treat in detail all possible cases for the threshold zero. 

Using the notation $P=-i\nabla$, 
the magnetic Schr\"{o}dinger operator is the operator
\begin{equation}
H=(P-A)^2+V\quad\text{on $L^2(\R^3)$}.
\end{equation}
Here $A\colon\R^3\to\R^3$ is the magnetic vector potential and 
$V\colon\R^3\to\R$ the electrostatic potential. We assume that both $V$ and $A$ are bounded and decay sufficiently fast. More precisely, in the latter case we assume that the magnetic field decays fast enough and show 
that a vector potential $A$ can be constructed in such way as to satisfy the required decay conditions, cf.~Lemma \ref{A-choice}.

We consider the resolvent $R(z)=(H-zI)^{-1}$. It is convenient to change variable in the resolvent to $\kappa$, where for $z\in\C\setminus(-\infty,0]$ we take $\kappa=-i\sqrt{\kappa}$, where
$\im \sqrt{z}>0$, such that $z=-\kappa^2$. Then we write $R(\kappa)=(H+\kappa^2I)^{-1}$.

To analyze the behavior of $R(\kappa)$ around the threshold zero the first step is to analyze the structure of solutions to $Hu=0$ in the weighted Sobolev spaces $H^{1,-s}$, $\frac12<s\leq\frac32$. See Section~\ref{set-up} for the definition of these spaces.
The solutions can be classified as follows. Assume that there exist $N$ linearly independent solutions to $Hu=0$ in $H^{1,-s}$, $\frac12<s\leq\frac32$. These solutions can be chosen in such a way that at most one solution $u\notin L^2(\R^3)$. The remaining $N-1$ solutions are eigenfunctions of $H$. This is the same classification as in the case $A=0$, see~\cite{JK}.

The resolvent expansions are in the topology of the bounded operators from $H^{-1,s}$ to $H^{1,-s'}$, for admissible values of $s$, $s'$.
In  the generic case there are no non-zero solutions to $Hu=0$ in $H^{1,-s}$, $\frac12<s\leq\frac32$, and then zero is said to be a regular point for $H$. The leading part of asymptotic expansion takes the form
\begin{equation}
R(\kappa)=F_0+\kappa F_1+\cO(\kappa^2)\quad\text{as $\kappa\to0$}
\end{equation}
in the topology of bounded operators from $H^{-1,s}$ to $H^{1,-s'}$, $s,s'>\frac52$.

If there exist non-zero solutions to $Hu=0$ in $H^{1,-s}$, $\frac12<s\leq\frac32$, zero is said to be an exceptional point. In this case the leading part of the asymptotic expansion takes the form
\begin{equation}
R(\kappa)=\kappa^{-2}F_{-2}+\kappa^{-1}F_{-1}+\cO(1)
\quad\text{as $\kappa\to0$}
\end{equation}
in the topology of bounded operators from $H^{-1,s}$ to $H^{1,-s'}$, $s,s'>\frac92$.

More precisely, there are three exceptional cases. In the first exceptional case there exists only one (up to normalization) solution to $Hu=0$ in $H^{1,-s}$, $\frac12<s\leq\frac32$, such that $u\notin L^2(\R^3)$. This is the \emph{zero resonance} case. In this case $F_{-2}=0$ and $F_{-1}=\ket{\psi_c}\bra{\psi_c}$, where $\psi_c$ is a normalization of the non-zero solution $u$. In the second exceptional case all solutions to $Hu=0$ in $H^{1,-s}$, $\frac12<s\leq\frac32$, lie in $L^2(\R^3)$ and zero is an eigenvalue of $H$. In this case $F_{-2}=P_0$, the eigenprojection of eigenvalue zero of $H$. The operator $F_{-1}$ is of rank at most $3$. It is described more precisely in Theorem~\ref{thm53}. The third exceptional case is the one where one has both a zero resonance and at least one zero eigenvalue. With the right choice of zero resonance function $\psi_c$ we have $F_{-2}=P_0$, and $F_{-1}$ is the sum of the coefficients in the first and second exceptional cases. 
See Theorems~\ref{thm-regular} and~\ref{thm53} for the full statements of the results.

Next we obtain similar resolvent expansions for the Pauli operator
\begin{equation*}
H_P=\bigl(\sigma\cdot(P-A)\bigr)^2+V\idd_2
=(P-A)^2\idd_2+\sigma\cdot B+V\idd_2,
\end{equation*}
where $\sigma=(\sigma_1,\sigma_2,\sigma_3)$ denotes the Pauli matrices and $\idd_2$ the $2\times2$ identity matrix. The operator is defined on $L^2(\R^3;\C^2)$.
We decompose it as
\begin{equation}
H_P=-\Delta\idd_2+W_P\, ,
\end{equation}
where
\begin{equation*}
W_P=\bigl(-P\cdot A -A\cdot P +\abs{A}^2\bigr) \idd_2
+ V\idd_2+\sigma\cdot B.
\end{equation*}
Then we can obtain a classification of the point zero in the spectrum of $H_P$. 
The singularity structure is the same as for the magnetic Schr\"{o}dinger operator, however in this case the zero resonance can have multiplicity two,
see Theorems \ref{thm-regular-pauli} and \ref{thm-ex-pauli}.

The proofs of these results are obtained by taking the results on the resolvent expansion of $-\Delta$ from~\cite{JK} and combining them with the factored resolvent technique from~\cite{jn}, adapted to the two cases considered here. The main point here is that we can write the magnetic Schr\"{o}dinger operator as
\begin{equation*}
H=-\Delta+W\quad\text{with} \quad W= -P\cdot A-A\cdot P+A^2+V,
\end{equation*}
and we can factor the perturbation as $W=w^*Uw$.
Let $\beta>0$ be the decay rate of the potentials. Then
$w\colon H^{1,-\beta/2}(\R^3)\to\K$ ($\K$ an auxiliary space) and $U$ is a self-adjoint and unitary operator on $\K$.
 See Section~\ref{sec-factored} for the details. A similar factorization holds from $W_P$ in the Pauli operator case. Once the factorization is in place, the scheme from~\cite{jn} can be applied and leads to the resolvent expansion results.

It should be noted that the factorization method developed in this paper can be applied not only to perturbations arising from magnetic Hamiltonians, but to all perturbations represented by self-adjoint
first order differential operators, see Remark~\ref{rem-first-order} for more details. 

As applications of the resolvent expansions of $H$ and $H_P$ around zero we obtain some further results.
First, we consider the case $V\geq0$ for the magnetic Schr\"{o}dinger operator and show that the point zero is a regular point. See Corollary~\ref{cor-regular}. Second, for the Pauli operator we consider the case $V=0$ and show that there are no zero resonances,  cf.~Lemma~\ref{lem-no-resonance}. Moreover, we establish the connection between our results and the criterion for zero eigenvalues obtained in~\cite{be,bel,bvb}, see Proposition~\ref{prop-zero-mode}.

Resolvent expansions have a long history. We will not give a full account, but limit ourselves to the following remarks. Results on Schr\"{o}dinger operators in $L^2(\R^3)$ were obtained in~\cite{JK}. In particular, the classification of the point zero used here was introduced in this paper. 
All dimensions and general perturbations, including first order differential operators, were considered in~\cite{mu}, but the coefficients of the resolvent expansions obtained there 
are given only implicitly as solutions of certain operator equations.

After these two papers there are many papers obtaining resolvent expansions in many different contexts. In the two-dimensional setting, resolvent expansions of magnetic Schr\"odinger operators, for the generic case, and of purely magnetic Pauli operators  were established in~\cite{kov1,kov2}.

However,  in dimension three, very few papers treated the case of magnetic Schr\"{o}dinger operators and none of them Pauli operators, as far as we know. 
Partial results in the generic case for magnetic 
Schr\"{o}dinger operators were obtained in~\cite{kk}. Behavior of the resolvent at threshold, again in the generic case,
was studied also in~\cite{egs}, where 
Strichartz estimates for magnetic Schr\"{o}dinger operators are proved.

The paper is organized as follows. In Section~\ref{set-up} we introduce notation and the basic set-up for magnetic Schr\"{o}dinger operators. In Section~\ref{sec-free-exp} we recall some results on the free resolvent from~\cite{JK}. Section~\ref{sec-factored} is devoted to the factored resolvent equation. We derive a number of properties of the operators entering into this factorization. In Section~\ref{sec-main} we state the main results on resolvent expansions for magnetic Schr\"{o}dinger operators. We limit the statements to the ones giving the singularity structure at threshold zero. In Section~\ref{sec-pauli} we state the results on the Pauli operator. In the final Section~\ref{sec-general} we briefly explain how to obtain a factorization of a general perturbation, thus allowing one to treat for example finite rank perturbations of a magnetic Schr\"{o}dinger operator.

\section{The set-up}\label{set-up}
We will consider magnetic Schr\"{o}dinger operators in $\R^3$. 
Let $B$ be a magnetic field in $\R^3$ and let $A\colon\R^3\to\R^3$  be an associated vector potential satisfying $\rt A=B$. Moreover, let $V\colon\R^3 \to \R$ be a scalar electric field. 
We consider the magnetic Schr\"odinger operator
\begin{equation} \label{schr-op}
H = (P -A)^2 +V,\quad \text{where $P = -i\nabla$},
\end{equation}
on $L^2(\R^3)$. 
Its resolvent is denoted by
\begin{equation*}
R(z)=  (H-zI)^{-1}. 
\end{equation*}
Our goal is to obtain asymptotic expansions of this resolvent around the threshold zero of $H$. 
These expansions are valid in the topology of bounded operators between weighted Sobolev spaces.

We recall the definition of the weighted Sobolev spaces. Let $\x = (1+\abs{x}^2)^{1/2}$. On the Schwartz space $\cS$ define a norm
\begin{equation} \label{hms-norm}
\norm{u}_{H^{k,s}} = \norm{\x^s (1- \Delta)^{k/2} u}_{L^2(\R^3)},  \quad \text{$k\in\R$, $s\in\R$}.
\end{equation}
The completion of $\cS$ with this norm is the weighted Sobolev space, denoted by $H^{k,s}(\R^3)$. In the sequel we abbreviate this notation to $H^{k,s}$. The same holds for other spaces defined on $\R^3$.
Obviously, $H^{0,0}=L^2(\R^3)$. The inner product $\ip{\cdot}{\cdot}$ on $L^2$ extends to a duality between $H^{k,s}$ and $H^{-k,-s}$.
The bounded operators from $H^{k,s}$ to $H^{k',s'}$ are denoted by
\begin{equation*}
\B(k,s;k',s') = \B(H^{k,s}; H^{k',s'}) 
\end{equation*}
and this space is equipped with the operator norm.

For later use we note the following property.
Let $s_j\in\R$, $j=1,2$, with $s_1\leq s_2$ and $k\in\R$. Then we have the continuous embedding
\begin{equation}\label{embed}
H^{k,s_2}\hookrightarrow H^{k,s_1}.
\end{equation}

It is convenient to use the notation
\begin{equation*}
 H^{k,s+0} = \bigcup_{s<r}  H^{k,r} , \qquad  
H^{k,s-0} = \bigcap_{r<s}  H^{ k,r}.
\end{equation*}
However, we do not introduce topologies on these spaces. They are considered only as algebraic vector spaces.

Let us now state the assumptions on $B$ and $V$, and explain our choice of vector potential $A$.

\begin{assumption}\label{ass-BV}
Let $\beta>2$.
Let $V\colon\R^3\to\R$ satisfy 
\begin{equation}
\abs{V(x)}\lesssim\x^{-\beta},\quad x\in\R^3.
\end{equation}
Let $B\colon\R^3\to\R^3$ be continuously differentiable, such that $\nabla\cdot B=0$ and
\begin{equation} \label{B-decay-cond}
\abs{B(x)}\lesssim\x^{-\beta-1},\quad x\in\R^3.
\end{equation}
\end{assumption}

In the proof of the following lemma we explain our choice of gauge for $B$ satisfying the above assumption.

\begin{lem}\label{A-choice}
There exists a vector potential $A\colon\R^3\to\R^3$ with $\rt A=B$ such that
\begin{equation}\label{A-decay}
\abs{A(x)}\lesssim\x^{-\beta},\quad x\in\R^3.
\end{equation} 
\end{lem}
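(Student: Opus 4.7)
The plan is to construct $A$ in the Coulomb gauge via the Biot--Savart formula
\begin{equation*}
A(x) = \frac{1}{4\pi} \int_{\R^3} \frac{B(y) \times (x-y)}{|x-y|^3} \, dy.
\end{equation*}
Under Assumption~\ref{ass-BV} we have $\beta+1 > 3$, so $B \in L^1(\R^3)$ and the integral converges absolutely for every $x$. A standard computation using $-\Delta(4\pi|x-y|)^{-1} = \delta(x-y)$ together with $\nabla\cdot B = 0$ then yields $\rt A = B$ (and, as a by-product, the Coulomb condition $\nabla\cdot A = 0$).

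The crucial input for the decay bound is that the solenoidal condition forces the cancellation
\begin{equation*}
\int_{\R^3} B(y)\, dy = 0.
\end{equation*}
Indeed, since $\nabla\cdot B = 0$, one has the pointwise identity $B_i(y) = \nabla_y \cdot (y_i B(y))$; the divergence theorem over $|y| < R$ then expresses $\int_{\R^3} B_i\, dy$ as the limit of a surface flux of order $R \cdot R^{-\beta-1} \cdot R^2 = R^{2-\beta}$, which vanishes because $\beta > 2$.

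To establish $|A(x)| \lesssim \langle x\rangle^{-\beta}$ for $|x|$ large, I would split the Biot--Savart integral at $|y| = |x|/2$. On the far region $|y| > |x|/2$, separating the diagonal strip $|x-y| \le |x|/4$ from its complement and using $|B(y)| \lesssim \langle y\rangle^{-\beta-1}$ directly yields $O(|x|^{-\beta})$. On the near region $|y| \le |x|/2$, I would Taylor expand $(x-y)/|x-y|^3 = x/|x|^3 + O(|y|/|x|^3)$; the leading term equals $|x|^{-3}\bigl(\int_{|y|\le|x|/2} B\bigr) \times x$, which by the vanishing-integral property above equals $-|x|^{-3}\bigl(\int_{|y|>|x|/2} B\bigr) \times x$ and is therefore of order $|x|^{-\beta}$, while the $O(|y|/|x|^3)$ remainder is of the same order by a similar split. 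Boundedness of $A$ for $|x| \le 1$ is elementary.

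The main obstacle is this decay step. A naive absolute-value bound on the Biot--Savart kernel yields only $|A(x)| \lesssim |x|^{-2}$, which is insufficient whenever $\beta > 2$; similarly, the transverse Poincar\'e gauge $A(x) = -\int_0^1 t\, x \times B(tx)\, dt$ produces only $|A(x)| \lesssim |x|^{-1}$. Upgrading to the sharp rate $\langle x\rangle^{-\beta}$ genuinely requires exploiting the vanishing total flux of $B$ forced by $\nabla\cdot B = 0$.
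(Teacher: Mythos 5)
Your construction takes a genuinely different route from the paper: you work in the Coulomb gauge via the Biot--Savart integral, whereas the paper starts from the Poincar\'e gauge $A_p(x)=\int_0^1 B(tx)\,t\,dt\wedge x$, splits it into a long-range piece $a_\ell$ (integral over $(0,\infty)$) and a short-range piece $a_s$ (integral over $(1,\infty)$), observes that $\nabla\cdot B=0$ makes $a_\ell$ curl-free on $\R^3\setminus\{0\}$, and subtracts a gradient to leave $A=-a_s$ for $\abs{x}\ge 1$, which then trivially decays like $\x^{-\beta}$.

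Unfortunately your argument has a genuine gap, and in fact the claimed bound is false for the Biot--Savart potential once $\beta>3$. The cancellation $\int B=0$ that you correctly derive from $\nabla\cdot B=0$ kills only the monopole (order $\abs{x}^{-2}$) contribution. The dipole moment $m_{ij}=\int y_i B_j(y)\,dy$ does \emph{not} vanish in general: the constraint $\nabla\cdot B=0$ forces $m$ to be antisymmetric (so $m_{ij}=\epsilon_{ijk}\mu_k$ for some $\mu\in\R^3$), but $\mu$ is the ordinary magnetic dipole moment and is generically nonzero. Concretely, in your near-region estimate the remainder after the zeroth-order Taylor expansion of the kernel is bounded by
\begin{equation*}
\frac{1}{\abs{x}^3}\int_{\abs{y}\le\abs{x}/2}\abs{B(y)}\,\abs{y}\,dy,
\end{equation*}
and when $\beta>3$ the integral $\int\abs{B(y)}\abs{y}\,dy$ is finite, so this only gives $\cO(\abs{x}^{-3})$, not $\cO(\abs{x}^{-\beta})$; the phrase ``of the same order by a similar split'' does not repair this, because there is no further moment cancellation available. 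Indeed, writing $A=\nabla\times\Phi$ with $-\Delta\Phi=B$, the dipole term of $\Phi$ contributes $\mu_k/(2\pi\abs{x}^3)+\cO(\abs{x}^{-4})$ to $A_k$, so the Coulomb-gauge potential decays exactly like $\abs{x}^{-3}$ whenever $\mu\ne 0$. Since the paper's main theorems need $\beta>5$ and $\beta>9$, your gauge choice cannot supply the required decay; the point of the lemma is precisely that one must leave the Coulomb gauge and subtract a suitable gradient, as in the paper's modified Poincar\'e construction. (Your argument is fine in the range $2<\beta<3$, where the remainder is genuinely $\cO(\abs{x}^{-\beta})$, but that range is not the relevant one.)
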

\begin{proof}
Let 
\begin{equation} 
A_p(x) = \int_0^1 B(t x)\, t\, dt \wedge x 
\end{equation}
denote the vector potential associated to $B$ via the Poincar\'{e} gauge. Moreover, let 
\begin{equation}  \label{LS-range}
a_\ell(x) = \int_0^\infty B(t x)\, t\, dt \wedge x, \qquad a_s(x) = \int_1^\infty B(t x)\, t\, dt \wedge x
\end{equation}
be the long and the short range components of $A_p$. Note that $a_\ell, a_s\colon\R^3\setminus\{0\}\to\R^3$, and that $A_p= a_\ell -a_s$. The crucial observation is 
that since $B$ is a magnetic field, we have $\nabla \cdot B=0$ , and a short calculation gives $\nabla \wedge a_\ell =0$ in $\R^3\setminus\{0\}$. 
Since $\R^3\setminus\{0\}$ is simply connected, there exists $\widetilde\varphi \in C^2(\R^3\setminus\{0\})$ such that $\nabla \widetilde\varphi = a_\ell$. Note however that 
\begin{equation*}
\abs{a_\ell(x)} \sim \abs{x}^{-1} \quad 
\text{as $\abs{x}\to 0$},
\end{equation*}
by scaling. Hence in order to construct a vector potential $A$ which satisfies \eqref{A-decay} we have to modify $\widetilde\varphi$ in the vicinity of the origin. By Tietze's extension theorem there exists 
$\varphi \in C^2(\R^3)$ such that $\varphi(x)=\widetilde\varphi(x)$ for all $x$ with $\abs{x} \geq 1$. Now we define $A\colon\R^3\to\R^3$ by 
\begin{equation} \label{A-gauge-defin}
A =A_p -\nabla \varphi .
\end{equation}
Then $A\in C^1(\R^3;\R^3)$ and for all $\abs{x}\geq 1$ we have 
\begin{align*}
\abs{A(x)} &\leq \abs{x}  \int_1^\infty t \abs{B(tx)} dt  =  \abs{x}^{-1} \int_{\abs{x}}^\infty s \abs{B(s\abs{x}^{-1}x)}ds 
\\
&\leq C \abs{x}^{-1} \int_{\abs{x}}^\infty   \langle s \rangle^{-\beta} ds \leq C  \langle x \rangle^{-\beta},
\end{align*}
as required.
\end{proof} 

\begin{rem}
The fact that for a given short range magnetic field in $\R^3$ it is always possible to construct a short range vector potential $A$, contrary to the case of dimension two, is well-known, cf.~\cite{ya}.
\end{rem}

 We consider the operator $H$ as a perturbation of $-\Delta$, denoted by $W$, i.e. we define
\begin{equation}\label{W-def}
W = H + \Delta= -P\cdot A -A\cdot P +\abs{A}^2 +V.
\end{equation}
Note that $W$ is a first order differential operator and thus a local operator.
The following lemma is stated without proof.
\begin{lem}\label{compact}
Let $B$ and $V$ satisfy Assumption~\ref{ass-BV} and let $A$ be chosen as in Lemma~\ref{A-choice}. Then  $W$ is a compact operator from $H^{1,s}$ to $H^{-1,s+\beta'}$ for any $s\in\R$ and $\beta'<\beta$.
\end{lem}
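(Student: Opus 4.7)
The plan is to decompose $W$ into the four summands
\[
W = -P\cdot A - A\cdot P + \abs{A}^2 + V
\]
and show each of them defines a compact operator from $H^{1,s}$ to $H^{-1,s+\beta'}$; a finite sum of compact operators is compact. The central tool is the weighted Rellich--Kondrachov embedding, which states that the inclusion $H^{k_1,s_1}\hookrightarrow H^{k_2,s_2}$ is compact whenever both $k_1>k_2$ and $s_1>s_2$.

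Three of the four summands factor boundedly through $H^{0,s+\beta}$. For multiplication by $V$ and by $\abs{A}^2$, the bounds $\abs{V(x)}\lesssim\x^{-\beta}$ and $\abs{A(x)}^2\lesssim\x^{-2\beta}\le\x^{-\beta}$ yield bounded maps $H^{1,s}\hookrightarrow H^{0,s}\to H^{0,s+\beta}$. The term $-A\cdot P$ factors as $H^{1,s}\xrightarrow{P}H^{0,s}\xrightarrow{M_A}H^{0,s+\beta}$, again bounded. In each of these three cases, composing with the Rellich-compact embedding $H^{0,s+\beta}\hookrightarrow H^{-1,s+\beta'}$ (compact because $0>-1$ and, crucially, $\beta>\beta'$) produces the desired compactness.

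The remaining term $-P\cdot A$ is the delicate one because the derivative acts after the decaying multiplication and Assumption~\ref{ass-BV} does not provide any decay of $\nabla A$. Interpreting $(P\cdot A)u=-i\nabla\cdot(Au)$ distributionally, one only obtains a bounded factorisation $H^{1,s}\to H^{-1,s+\beta}$, and the subsequent inclusion $H^{-1,s+\beta}\hookrightarrow H^{-1,s+\beta'}$ is not compact because the regularity index does not strictly decrease. The obstacle is resolved by moving the Rellich compactness step to the input end: for any $\epsilon\in(0,\beta-\beta')$ one factors
\[
H^{1,s}\hookrightarrow H^{0,s-\epsilon}\xrightarrow{M_A}H^{0,s-\epsilon+\beta}\xrightarrow{P}H^{-1,s-\epsilon+\beta}\hookrightarrow H^{-1,s+\beta'},
\]
where the first arrow is compact by Rellich (both indices strictly decrease), the next two are bounded by the standard mapping properties of $M_A$ and $P$, and the final inclusion is bounded because $s-\epsilon+\beta\ge s+\beta'$ by the choice of $\epsilon$. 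A compact-followed-by-bounded composition is compact, so $-P\cdot A$ is also compact.

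The main obstacle is thus the asymmetry between $A\cdot P$ (derivative on the input) and $P\cdot A$ (derivative on the output); its resolution consists in spending a small amount $\epsilon$ of the weight slack $\beta-\beta'$ on an input-side Rellich step, which is precisely what the strict inequality $\beta'<\beta$ in the statement permits.
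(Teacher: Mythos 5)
The paper states this lemma without proof, so there is no internal argument to compare against. Your proof is correct: decomposing $W$ into the four summands, factoring each of $V$, $\abs{A}^2$, and $A\cdot P$ through a bounded map into $H^{0,s+\beta}$ followed by the compact Rellich embedding into $H^{-1,s+\beta'}$, and identifying $-P\cdot A$ as the one delicate term (derivative on the output side, no assumed decay on $\nabla A$) are all sound. Your $\eps$-shift, inserting the compact embedding $H^{1,s}\hookrightarrow H^{0,s-\eps}$ at the input and paying for it at the end with the weight slack $\beta-\beta'>0$, is a valid resolution. A slightly shorter route for that last term is duality: compactness is preserved by taking adjoints, the $L^2$ pairing identifies $(H^{k,\sigma})^*$ with $H^{-k,-\sigma}$, and $(A\cdot P)^*=P\cdot A$; so compactness of $A\cdot P\colon H^{1,-\sigma-\beta'}\to H^{-1,-\sigma}$, which you already established for every weight, immediately yields compactness of $P\cdot A\colon H^{1,\sigma}\to H^{-1,\sigma+\beta'}$ without any $\eps$-juggling. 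Either way the lemma is established.
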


\section{Properties of the free resolvent}
\label{sec-free-exp}
Let  $R_0(z)=(-\Delta-zI)^{-1}$, $z\in\C\setminus[0,\infty)$. We recall some  properties of this resolvent from \cite{JK,jn}. We use the conventions from \cite{jn}.
For $z\in\C\setminus[0,\infty)$ let $\kappa=-i\sqrt{z}$, where 
$\im \sqrt{z}>0$, such that $z=-\kappa^2$. We write $R_0(\kappa)$ instead of $R_0(-\kappa^2)$ in the sequel.

\begin{lem}[{\cite[Lemma 2.2]{JK}}]\label{free-exp}
Assume $p\in\N_0$ and $s>p+\frac32$. Then 
\begin{equation}
R_0(\kappa)=\sum_{j=0}^p\kappa^jG_j + \cO(\kappa^{p+1})
\end{equation}
as $\kappa\to0$, $\re\kappa>0$, in $\B(-1,s;1,-s)$. Here
the operators $G_j$ are given by their integral kernels
\begin{equation}
G_j(x,y)=(-1)^j\frac{\abs{x-y}^{j-1}}{4\pi j!},\quad j\geq0.
\end{equation}
We have
\begin{equation}\label{G0-map}
G_0\in\B(-1,s;1,-s')\quad\text{for $s,s'>\tfrac12$ and $s+s'\geq2$}, 
\end{equation}
and for $j\geq1$
\begin{equation}\label{Gj-map}
G_j\in\B(-1,s;1,-s')\quad\text{for $s,s'>j+\tfrac12$}.
\end{equation}
\end{lem}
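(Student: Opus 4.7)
The plan is to work directly with the explicit integral kernel of the free resolvent in three dimensions,
\[
R_0(\kappa)(x,y)=\frac{e^{-\kappa\abs{x-y}}}{4\pi\abs{x-y}},\qquad \re\kappa>0,
\]
and to expand the exponential as a Taylor series about $\kappa=0$. Writing $e^{-\kappa\abs{x-y}}=\sum_{j=0}^{p}\frac{(-\kappa\abs{x-y})^j}{j!}+r_p(\kappa,x,y)$ and dividing by $4\pi\abs{x-y}$, the coefficient of $\kappa^j$ is precisely $G_j(x,y)=(-1)^j\abs{x-y}^{j-1}/(4\pi j!)$, while the Lagrange form of the remainder gives the pointwise bound
\[
\biggl|\,R_0(\kappa)(x,y)-\sum_{j=0}^{p}\kappa^j G_j(x,y)\,\biggr|\;\le\; \frac{\kappa^{p+1}\abs{x-y}^{p}}{4\pi(p+1)!}
\]
for $\re\kappa>0$, since the remaining exponential factor is bounded by $1$.

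First I would establish the mapping properties \eqref{G0-map} and \eqref{Gj-map}. For $G_0$ the kernel is the Newton potential $1/(4\pi\abs{x-y})$, and boundedness from $H^{-1,s}$ into $H^{1,-s'}$ reduces, after inserting factors of $(1-\Delta)^{\pm 1/2}$, to the weighted $L^2\!\to L^2$ estimate for $(-\Delta)^{-1}$; the admissible range $s,s'>\tfrac12$, $s+s'\ge 2$ is precisely the one given by a Schur-test argument applied to the Newton kernel and its natural scaling. For $j\ge 1$ the kernel $\abs{x-y}^{j-1}$ is smooth and of polynomial growth, so using $\abs{x-y}^{j-1}\lesssim \jap{x}^{j-1}+\jap{y}^{j-1}$ the mapping $G_j\in\B(-1,s;1,-s')$ follows from a weighted Cauchy--Schwarz estimate whose integrals $\int \jap{x}^{2(j-1)-2s'}\,dx$ and $\int \jap{y}^{2(j-1)-2s}\,dy$ converge exactly when $s,s'>j+\tfrac12$. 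The $H^1$-regularity on the target side is obtained by observing that $\nabla_x G_j$ has kernel of the same polynomial type as $G_{j-1}$ (with an extra factor $\abs{x-y}$ cancelled against the gradient) so the same estimate applies.

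With these kernel bounds in hand, the remainder estimate in $\B(-1,s;1,-s)$ follows by applying the argument used for $G_{p+1}$ to the remainder kernel, which is pointwise bounded by $C\kappa^{p+1}\abs{x-y}^{p}$. The threshold $s>p+\tfrac32$ is exactly what is needed to absorb the growth $\abs{x-y}^p\lesssim\jap{x}^p+\jap{y}^p$ against the symmetric weight $\jap{x}^{-s}\jap{y}^{-s}$ and still retain enough decay for the weighted integrals to converge. The main obstacle is that the polynomial growth of the remainder kernel is borderline: one must carefully track a half-power arising from the Sobolev gain between $H^{-1}$ and $H^1$, which is why the usual $L^2$-weighted Schur estimate alone is not enough and one has to intertwine the bounds with the action of $(1-\Delta)^{\pm 1/2}$. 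Once this is done the estimate is uniform for $\re\kappa>0$ small, and the $\cO(\kappa^{p+1})$ conclusion follows.
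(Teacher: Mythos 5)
The paper does not actually prove this lemma: it is imported verbatim from Jensen--Kato, cited as {\cite[Lemma~2.2]{JK}}, so there is no in-paper proof to compare against. That said, your outline --- Taylor-expanding the explicit kernel $e^{-\kappa\abs{x-y}}/(4\pi\abs{x-y})$ and estimating the remainder pointwise --- is exactly the route taken in the cited reference, and the identification of the coefficients $G_j$ together with the pointwise remainder bound
\[
\Bigl|\,R_0(\kappa)(x,y)-\sum_{j=0}^{p}\kappa^j G_j(x,y)\,\Bigr|\le \frac{\abs{\kappa}^{p+1}\abs{x-y}^{p}}{4\pi(p+1)!},\qquad \re\kappa>0,
\]
is correct (you should write $\abs{\kappa}$ rather than $\kappa$ on the right, since $\kappa$ is complex, and use the integral form of the remainder, not Lagrange's, to justify the bound for complex~$\kappa$).

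The genuine gap is the upgrade from $L^{2,s}\to L^{2,-s'}$ to $H^{-1,s}\to H^{1,-s'}$. You acknowledge that one must ``intertwine the bounds with the action of $(1-\Delta)^{\pm1/2}$'' but do not supply the mechanism, and the reference to ``a half-power arising from the Sobolev gain'' is not what is actually going on. Since every operator in sight is a convolution operator, $(1-\Delta)^{1/2}T(1-\Delta)^{1/2}$ agrees with $(1-\Delta)T$ as a convolution operator, and $(1-\Delta)T$ can be computed explicitly: $(1-\Delta)G_0=I+G_0$, $(1-\Delta)G_1=G_1$, $(1-\Delta)G_j=G_j-G_{j-2}$ for $j\ge2$, and $(1-\Delta)R_0(\kappa)=(1-\kappa^2)R_0(\kappa)+I$. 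A one-line computation then yields
\[
(1-\Delta)\Bigl[R_0(\kappa)-\sum_{j=0}^{p}\kappa^jG_j\Bigr]
=\Bigl[R_0(\kappa)-\sum_{j=0}^{p}\kappa^jG_j\Bigr]-\kappa^2\Bigl[R_0(\kappa)-\sum_{j=0}^{p-2}\kappa^jG_j\Bigr],
\]
which reduces the $\B(-1,s;1,-s)$ estimate of the remainder to the weighted $L^2\to L^2$ kernel bound you already have (the order-$(p-2)$ remainder carries an extra $\kappa^2$ and a weaker weight condition, so the threshold $s>p+\tfrac32$ is unchanged). Without this identity your argument proves only the $L^{2,s}\to L^{2,-s}$ version of the expansion.

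Two further smaller points. For $G_0$, the claim that a bare ``Schur-test argument'' covers the full range $s,s'>\tfrac12$, $s+s'\ge2$ is optimistic: at the endpoint $s+s'=2$ the power-weight Schur test is borderline, and the reference handles $G_0$ with more care (essentially a Stein--Weiss-type inequality or a near-/far-diagonal splitting). For $j\ge1$ your Hilbert--Schmidt count for the weights is fine and does give the $s,s'>j+\tfrac12$ threshold, but the target-side $H^1$-regularity and source-side $(1-\Delta)^{1/2}$ should again both be handled through the convolution identity above rather than by differentiating the kernel alone.
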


\section{The factored resolvent equation}
\label{sec-factored}
We will treat the operator $H$ as a perturbation of $-\Delta$.  
Write 
\begin{equation*}
A =(A_1,A_2, A_3) \quad \text{with  $A_j = D_j C_j$}, 
\end{equation*}
where 
\begin{equation} \label{BC-decay}
\abs{D_j(x)}  \lesssim\langle x \rangle^{-\beta/2} ,\quad \abs{C_j(x)}  \lesssim  \langle x\rangle^{-\beta/2}.
\end{equation}
Now let 
\begin{equation}\label{K-def}
\K= L^2(\R^3)\oplus L^2(\R^3;\C^3) \oplus L^2(\R^3;\C^3) \oplus L^2(\R^3;\C^3) ,
\end{equation} 
and put
\begin{align*}
v(x) &= \sqrt{\abs{V(x)}}, 
\\
U(x) &=\begin{cases}
-1,   &\text{if  $V(x) <0$}, \\
\phantom{-}1,  &\text{otherwise}.
\end{cases}  
\end{align*}
We define an operator matrix 
 by
\begin{equation} \label{omega}
w = \begin{bmatrix} v & A_1 & A_2 & A_3 & C_1 & C_2 & 
C_3 & D_1 P_1 & D_2 P_2 & D_3 P_3
\end{bmatrix}^T. 
\end{equation}
Under Assumption~\ref{ass-BV} and the choice~\eqref{BC-decay} we have
\begin{equation}\label{w-map}
w\in\B(H^{1,-s},\K)\quad \text{and}
\quad w^*\in\B(\K,H^{-1,s})\quad \text{for $s\leq\beta/2$}.
\end{equation}
Moreover, we define the block operator matrix $\U\colon\K\to\K$ by
\begin{equation} 
\U =
\begin{bmatrix}
U  & 0 &0  &0 \\
0 & \idd_3 &0 & 0 \\
0 & 0 &0 & -\idd_3 \\
0 & 0 & -\idd_3 & 0
\end{bmatrix}.
\end{equation} 
Here $\idd_3$ denotes the $3\times 3$ unit matrix. Note that $\U$ is self-adjoint and that $\U^2=\idd_{\K}$.
The perturbation $W$ given by \eqref{W-def} then
satisfies
\begin{equation} \label{factorisation}
W = w^* \U w.
\end{equation}

\begin{rem} \label{rem-first-order}
The same factorization method as above can be applied to any self-adjoint first order differential operator perturbation of $-\Delta$ of the form 
$$
 i (L\cdot \nabla +\nabla \cdot L) +V,
$$
as long as the vector field $L\colon \R^3\to \R^3$ is sufficiently regular. Factorization of a more general class of perturbations is discussed in Section~\ref{sec-general}.
\end{rem} 

\noindent To continue we define the operator 
\begin{equation}
M(\kappa)=\U+w R_0(\kappa)w^\ast 
\end{equation}
on $\K$. 

\begin{rem}
Note that for $-\kappa^2\notin\sigma(H)$ the operator $M(\kappa)$ is invertible. This follows from the relation
\begin{equation*}
M(\kappa)\bigl(\U-\U w(H+\kappa^2)^{-1}w^{\ast}\U\bigr)
=\bigl(\U-\U w(H+\kappa^2)^{-1}w^{\ast}\U\bigr)M(\kappa)=I,
\end{equation*}
which is an immediate consequence of the second resolvent equation.
\end{rem}

Lemma~\ref{free-exp} leads to the following result.
\begin{lem}\label{lemma43}
Let $p\in\N$. Assume $\beta>2p+3$. Then
\begin{equation}\label{M-exp}
M(\kappa)=\sum_{j=0}^p\kappa^jM_j+\cO(\kappa^{p+1})
\end{equation}
as $\kappa\to0$, $\re\kappa>0$, in $\B(\K)$.
Here
\begin{equation}\label{M0-3d}
M_0=\U+wG_0w^\ast
\end{equation}
and
\begin{equation}\label{Mj-eq}
 M_j=wG_jw^\ast, \quad j\geq1.
\end{equation}
\end{lem}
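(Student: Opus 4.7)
The plan is to deduce the expansion of $M(\kappa)$ directly from the free resolvent expansion in Lemma~\ref{free-exp} by sandwiching it between $w$ and $w^\ast$, using the mapping properties recorded in~\eqref{w-map}. The hypothesis $\beta>2p+3$ is precisely what is needed to match the two weight constraints.

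Concretely, I first pick any $s$ with $p+\tfrac32<s\leq\beta/2$; such $s$ exists because $\beta>2p+3$ means $\beta/2>p+\tfrac32$. By Lemma~\ref{free-exp} we then have
\begin{equation*}
R_0(\kappa)=\sum_{j=0}^p \kappa^j G_j+\cO(\kappa^{p+1})\qquad\text{in }\B(-1,s;1,-s)
\end{equation*}
as $\kappa\to 0$ with $\re\kappa>0$. By~\eqref{w-map}, for the chosen $s$ we have $w^\ast\in\B(\K,H^{-1,s})$ and $w\in\B(H^{1,-s},\K)$. Composing from both sides with these bounded operators preserves the remainder order, so
\begin{equation*}
w R_0(\kappa) w^\ast=\sum_{j=0}^p \kappa^j\, w G_j w^\ast+\cO(\kappa^{p+1})\qquad\text{in }\B(\K).
\end{equation*}
Adding the $\kappa$-independent operator $\U\in\B(\K)$ yields~\eqref{M-exp} with $M_0$ and $M_j$ as claimed in~\eqref{M0-3d} and~\eqref{Mj-eq}.

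The only point requiring a brief check is that each $wG_jw^\ast$ indeed lies in $\B(\K)$, which follows from the maps~\eqref{G0-map} and~\eqref{Gj-map}: for $j\geq 1$ we need $s>j+\tfrac12$, which holds since $s>p+\tfrac32\geq j+\tfrac32$, while for $j=0$ the weaker requirement $s,s'>\tfrac12$ with $s+s'\geq 2$ is trivially satisfied by taking $s'=s$. Thus no further estimates are needed — the whole statement is a routine transfer of Lemma~\ref{free-exp} through the factorization~\eqref{factorisation}, and the only (mild) obstacle is bookkeeping the weight matching between the free-resolvent expansion and the decay of $w,w^\ast$.
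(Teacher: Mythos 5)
Your proposal is correct and follows exactly the route the paper intends: the paper states this lemma immediately after the remark ``Lemma~\ref{free-exp} leads to the following result'' with no further proof, and your sandwiching of the free-resolvent expansion in $\B(-1,s;1,-s)$ between the bounded maps $w$ and $w^\ast$, with the weight $s$ chosen in the overlap window $p+\tfrac32<s\leq\beta/2$ guaranteed by $\beta>2p+3$, is precisely the argument being invoked. The bookkeeping of the mapping properties of $G_0$ and $G_j$ is accurate, and the passage of the $\cO(\kappa^{p+1})$ remainder through the bounded compositions is handled correctly.
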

For all $-\kappa^2\notin\sigma(H)$  we have the factored resolvent equation
\begin{equation}\label{res-formula-2}
R(\kappa)=R_0(\kappa)-R_0(\kappa)w^{\ast}M(\kappa)^{-1}wR_0(\kappa),
\end{equation}
see e.g.~\cite{jn}.

It follows from \eqref{M-exp} that the operator 
\begin{equation}
\widetilde M_1(\kappa) =\frac 1\kappa \big(M(\kappa)-M_0\big)
\end{equation}
is uniformly bounded as $\kappa\to 0$.
The following inversion formula is needed for the expansion of $M(\kappa)^{-1}$ as $\kappa\to 0$. 
We state it in a form simplified to our setting.  For its general form we refer to \cite{jn,jn2}.

\begin{lem}[{\cite[Corollary~2.2]{jn}}]\label{lem-jn}
Let $M(\kappa)$ be as above. Suppose that $0$ is an isolated point of the spectrum of $M_0$,  and let $S$ be the 
corresponding Riesz projection. 
Then for sufficiently small $\kappa$ the operator $Q(\kappa)\colon S\K\to S\K$ defined by 
\begin{align*}
Q(\kappa) & = \frac 1\kappa \big(S-S(M(\kappa) +S)^{-1} S\big) = \sum_{j=0}^\infty (-\kappa)^j S \big[ \widetilde M_1(\kappa) (M_0+S)^{-1} \big]^{j+1} S
\end{align*}
is uniformly bounded as $\kappa\to 0$. Moreover, the operator $M(\kappa)$ has a bounded inverse in $\K$ if and only if $Q(\kappa)$ has a bounded inverse in
$S\K$ and in this case
\begin{equation}
M(\kappa)^{-1} = (M(\kappa) +S)^{-1} +\frac 1\kappa(M(\kappa) +S)^{-1} S Q(\kappa)^{-1} S (M(\kappa) +S)^{-1}
\end{equation}
\end{lem}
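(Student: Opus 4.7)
The plan is to exploit that $M_0=\U + wG_0 w^{\ast}$ is self-adjoint on $\K$, so the Riesz projection $S$ is in fact the orthogonal projection onto $\ker M_0$ and satisfies $M_0 S = SM_0 = 0$. Consequently $M_0+S$ is boundedly invertible on $\K$ with the key identity
\[
S(M_0+S)^{-1} = (M_0+S)^{-1}S = S.
\]
This is what will allow the two outer copies of $S$ in the definition of $Q(\kappa)$ to absorb one factor of $(M_0+S)^{-1}$ each.

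Since $\widetilde M_1(\kappa)$ is uniformly bounded on $\K$ as $\kappa\to 0$ by Lemma~\ref{lemma43}, writing $M(\kappa)+S = (M_0+S)\bigl(I + \kappa (M_0+S)^{-1}\widetilde M_1(\kappa)\bigr)$ and applying a Neumann series gives, for all sufficiently small $\kappa$,
\[
(M(\kappa)+S)^{-1} = \sum_{j=0}^{\infty}(-\kappa)^{j}(M_0+S)^{-1}\bigl[\widetilde M_1(\kappa)(M_0+S)^{-1}\bigr]^{j}.
\]
Sandwiching between two $S$'s and using the identity above collapses the leftmost $(M_0+S)^{-1}$, so the $j=0$ term reduces to $S$. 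Subtracting from $S$, dividing by $\kappa$, and shifting the index by one produces exactly the claimed series representation for $Q(\kappa)$; its uniform boundedness is then immediate from the uniform boundedness of $\widetilde M_1(\kappa)$ and $(M_0+S)^{-1}$.

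For the inversion formula I will set $A=(M(\kappa)+S)^{-1}$ and rely on the two identities $M(\kappa)A = I - SA$ and $\kappa Q(\kappa) = S - SAS$, both immediate from the previous step. Multiplying the proposed expression on the right by $M(\kappa)$ gives
\[
(I-SA) + \tfrac{1}{\kappa}(I-SA)\, S Q(\kappa)^{-1} S A = (I-SA) + \tfrac{1}{\kappa}(S - SAS)\, Q(\kappa)^{-1} S A = I,
\]
and the symmetric computation with $AM(\kappa)=I-AS$ handles the left inverse. This simultaneously verifies the formula and gives the implication ``$Q(\kappa)$ invertible $\Rightarrow$ $M(\kappa)$ invertible''. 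For the converse, a Schur-complement-type argument on the decomposition $\K = S\K \oplus (I-S)\K$: since the $(I-S)$-block of $M(\kappa)$ stays invertible for small $\kappa$, the invertibility of $M(\kappa)$ forces invertibility of the corresponding Schur complement on $S\K$, which is precisely $\kappa Q(\kappa)$.

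The argument is essentially a formal computation; the only place where genuine content enters is the identity $S(M_0+S)^{-1}=S$, which hinges on $M_0$ having no Jordan structure at $0$. In the more general setting of \cite{jn,jn2} this has to be arranged by construction, but here it is free from the self-adjointness of $M_0$. Given this, the main obstacle is purely notational: matching signs and indices when re-indexing the series for $Q(\kappa)$, and setting up the Schur-complement bookkeeping cleanly enough to extract a bounded inverse of $Q(\kappa)$ from that of $M(\kappa)$.
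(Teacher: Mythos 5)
The paper itself does not prove this lemma: it is quoted from~\cite[Corollary~2.2]{jn}, and the text explicitly defers to~\cite{jn,jn2} for the proof. Your write-up therefore supplies a proof the paper omits, and the essential computations are correct. In particular, the collapse $S(M_0+S)^{-1}=(M_0+S)^{-1}S=S$ is legitimate here because $M_0=\U+wG_0w^*$ is self-adjoint, so the Riesz projection at $0$ coincides with the orthogonal projection onto $\ker M_0$ and $M_0S=SM_0=0$. The Neumann series for $(M(\kappa)+S)^{-1}$, the reindexing that yields the stated series for $Q(\kappa)$, the uniform boundedness, and the two-sided verification of the inversion formula (which also gives $Q$ invertible $\Rightarrow M(\kappa)$ invertible) all go through exactly as you describe.

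The only inaccuracy is in the converse direction, where you identify $\kappa Q(\kappa)$ with the Schur complement of $M(\kappa)$. In the block decomposition $\K=S\K\oplus(I-S)\K$, write $M_{ij}$ for the blocks of $M(\kappa)$ and set $Y=M_{11}-M_{12}M_{22}^{-1}M_{21}$ (the actual Schur complement, with $M_{22}$ invertible for small $\kappa$). Block-inverting $M(\kappa)+S$, whose $(1,1)$-block is $M_{11}+S$, gives $S(M(\kappa)+S)^{-1}S=(Y+S)^{-1}$ on $S\K$, hence
\begin{equation*}
\kappa Q(\kappa)=S-(Y+S)^{-1}=Y(Y+S)^{-1},
\end{equation*}
so $\kappa Q(\kappa)$ is \emph{not} equal to $Y$. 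However $Y+S$ is invertible for small $\kappa$ (it is the Schur complement of the invertible $M(\kappa)+S$), so $\kappa Q(\kappa)$ and $Y$ are invertible simultaneously, and the chain $M(\kappa)$ invertible $\Rightarrow Y$ invertible $\Rightarrow Q(\kappa)$ invertible survives once you insert this one-line bridge. Alternatively, you can avoid Schur complements altogether: from $M(\kappa)=(M(\kappa)+S)\bigl(I-(M(\kappa)+S)^{-1}S\bigr)$ and the standard equivalence of the invertibility of $I-AB$ and $I-BA$ applied to $A=(M(\kappa)+S)^{-1}S\colon S\K\to\K$ and $B=S\colon\K\to S\K$, one sees directly that $M(\kappa)$ is invertible iff $S-S(M(\kappa)+S)^{-1}S=\kappa Q(\kappa)$ is invertible on $S\K$.
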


 Proposition \ref{prop-M0} below implies that the hypotheses of Lemma \ref{lem-jn} are satisfied. 

 In view of equation \eqref{res-formula-2} 
the first step in obtaining an asymptotic expansion of $R(\kappa)$ as $\kappa\to0$ consists in analyzing $\ker M_0$. In the sequel we always assume at least $\beta>2$. Under this condition Lemma~\ref{free-exp} implies that $G_0W\in\B(1,-s;1,-s)$ and $WG_0\in\B(-1,s;-1,s)$, provided $\frac12<s<\beta-\frac12$. We define
\begin{align}
M&\coloneqq\{u\in H^{1,-s}\mid (1+G_0W)u=0\},\\
N&\coloneqq\{u\in H^{-1,s}\mid (1+WG_0)u=0\}.
\end{align}
It is shown in \cite{JK} that these spaces are independent of $s$ provided
$\frac12<s<\beta-\frac12$. Furthermore, since $G_0W$ and $WG_0$ are compact (see Lemma~\ref{compact}) we get by duality
\begin{equation}\label{NM-dim}
\dim M= \dim N.
\end{equation}

\noindent We need the following result from \cite{JK}.
\begin{lem}[{\cite[Lemma 2.4]{JK}}]\label{lem-jk}
\
\begin{itemize}
\item[\textup{(1)}] $-\Delta G_0\, u = u$ for any $u\in H^{-1, \frac 12+0}$.  

\item[\textup{(2)}] $G_0(-\Delta) u'=  u'$ for any $u'\in  H^{0, -\frac 32}$ such that $\Delta  u' \in H^{-1, \frac 12+0} $.
\end{itemize}
\end{lem}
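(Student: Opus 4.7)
For part (1), the identity $-\Delta G_0 u = u$ is classical for $u\in\cS$, since $G_0$ is convolution with the fundamental solution $(4\pi|x|)^{-1}$ of $-\Delta$ in $\R^3$. To extend to $u\in H^{-1,\frac12+0}$, fix $\eps\in(0,1)$ with $u\in H^{-1,\frac12+\eps}$ and approximate $u$ by $u_n\in\cS$ in that norm. By Lemma~\ref{free-exp} with $s=\frac12+\eps$, $s'=\frac32-\eps$ (so $s+s'=2$), we have $G_0\in\B(-1,\frac12+\eps;1,-\frac32+\eps)$, hence $G_0u_n\to G_0u$ in $H^{1,-\frac32+\eps}$. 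Since $-\Delta$ maps $H^{1,-\frac32+\eps}$ continuously into $H^{-1,-\frac32+\eps}$, we obtain $-\Delta G_0 u_n\to -\Delta G_0 u$ there; on the other hand $-\Delta G_0 u_n=u_n\to u$ in $H^{-1,\frac12+\eps}\hookrightarrow H^{-1,-\frac32+\eps}$, and uniqueness of limits yields $-\Delta G_0 u = u$.

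For part (2), I would set $v\coloneqq u' - G_0(-\Delta u')$. Since $-\Delta u'\in H^{-1,\frac12+0}$, part (1) gives $(-\Delta)G_0(-\Delta u')=-\Delta u'$, so $-\Delta v = 0$ in the sense of distributions. The same mapping property shows $G_0(-\Delta u')\in H^{1,-\frac32+\eps}\hookrightarrow H^{0,-\frac32}$; combined with $u'\in H^{0,-\frac32}$ this gives $v\in H^{0,-\frac32}$. It therefore suffices to prove that any distributionally harmonic $v\in H^{0,-\frac32}(\R^3)$ vanishes identically.

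By Weyl's lemma, such a $v$ is smooth and classically harmonic. Using the mean value property on the ball $B_{|x|/2}(x)$, whose volume is comparable to $|x|^3$, together with Cauchy--Schwarz and the bound $\langle y\rangle\leq\tfrac{3}{2}\x$ valid for $y\in B_{|x|/2}(x)$ and $|x|\geq 1$, one obtains
\begin{equation*}
|v(x)|^2 \lesssim |x|^{-3}\int_{B_{|x|/2}(x)}|v(y)|^2\,dy \lesssim |x|^{-3}\,\x^{3}\,\|v\|^2_{H^{0,-3/2}}.
\end{equation*}
Hence $v$ is globally bounded, Liouville's theorem forces $v\equiv c$, and since $\int_{\R^3}\x^{-3}\,dx=\infty$ in dimension three the only constant lying in $H^{0,-\frac32}$ is $0$. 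Thus $G_0(-\Delta)u'=u'$.

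The one genuinely delicate step is this Liouville-type uniqueness. The weight $\x^{-3/2}$ is borderline: constants fail to lie in $H^{0,-\frac32}(\R^3)$ only by a logarithmic divergence, so one must exploit mean value estimates on balls whose radius grows with $|x|$ (rather than fixed-radius balls) in order to convert the weighted $L^2$ control into a pointwise bound. Everything else amounts to routine density and continuity bookkeeping based on Lemma~\ref{free-exp}.
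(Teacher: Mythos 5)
Your proof is correct. The paper itself does not give a proof of this lemma; it simply cites \cite[Lemma~2.4]{JK}, so there is no internal argument to compare against. A few remarks on your reconstruction. Part (1) is a standard density argument and is handled cleanly via the mapping property \eqref{G0-map} together with the embedding \eqref{embed}. Part (2) is where the real content lies: you correctly reduce it to showing that a distributionally harmonic function $v \in H^{0,-3/2}(\R^3)$ vanishes. Your use of the mean value property on the ball $B_{|x|/2}(x)$ of radius growing linearly in $|x|$ is exactly what makes the argument close, since $|B_{|x|/2}(x)|^{-1} \sim |x|^{-3}$ precisely cancels the factor $\x^{3}$ that the weighted $L^2$-norm costs you, yielding a uniform bound; with fixed-radius balls one would only get $|v(x)|\lesssim\x^{3/2}$, which is not enough. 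The final step — that no nonzero constant lies in $H^{0,-3/2}(\R^3)$ because $\int \x^{-3}\,dx$ diverges logarithmically in dimension three — is correct and is indeed the borderline feature of the weight. The original Jensen--Kato treatment proceeds largely through the Fourier-analytic representation of the resolvent kernel rather than through Weyl's lemma and mean-value estimates, but your real-space argument is self-contained and yields the same conclusion.
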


The spaces $M$ and $\ker(M_0)$ are related to a generalized null space of $H$ which we define by
\begin{equation}
\nul(H)=\{u\in H^{1,-\frac12-0}\mid Hu=0\},
\end{equation}
where $Hu$ is understood to be in the sense of distributions.

\begin{lem} \label{lem-ker-M0}
Let  Assumption~\ref{ass-BV} be satisfied for some $\beta > 3$.
\begin{itemize}
\item[\textup{(1)}] Let $f\in \ker(M_0)$, and define $u =-G_0w^* f$. Then $u\in M$, and $u\in \nul(H)$.
\item[\textup{(2)}]
Let $u\in M$. Then $u\in \nul(H)$, and $f= \U w u$ satisfies  $f\in 
\ker(M_0)$.
\end{itemize}
\end{lem}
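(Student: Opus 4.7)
The plan is to exploit the factorization $W=w^{*}\U w$ together with the identity $\U^{2}=\idd_{\K}$ and the one-sided inverses of $-\Delta$ on weighted spaces supplied by Lemma~\ref{lem-jk}. The two statements are essentially two readings of the relation $u=-G_{0}w^{*}f$ connecting $u\in M$ with $f\in\ker(M_{0})$, and the proof consists in verifying that all operators act in the right spaces.

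For part (1), start from $f\in\ker(M_{0})$, i.e.\ $\U f=-wG_{0}w^{*}f$. Since $\beta>3$, by~\eqref{w-map} we have $w^{*}f\in H^{-1,\beta/2}$ with $\beta/2>3/2$, and then by~\eqref{G0-map} the function $u=-G_{0}w^{*}f$ lies in $H^{1,-s'}$ for any $s'>\tfrac12$ with $s'+\tfrac{\beta}{2}\geq2$, which is fine as $\beta>3$. Apply $w$ to the definition of $u$: $wu=-wG_{0}w^{*}f=\U f$, and multiply by $\U$ to get $\U wu=f$. Hence $Wu=w^{*}\U wu=w^{*}f$, so $(1+G_{0}W)u=u+G_{0}w^{*}f=0$, showing $u\in M$. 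To check $u\in\nul(H)$, apply Lemma~\ref{lem-jk}(1) to $w^{*}f\in H^{-1,1/2+0}$: this gives $-\Delta u=-(-\Delta G_{0})w^{*}f=-w^{*}f=-Wu$, so $Hu=-\Delta u+Wu=0$ in the sense of distributions, and the mapping properties above give $u\in H^{1,-1/2-0}$.

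For part (2), let $u\in M$, so $u\in H^{1,-s}$ with $\tfrac12<s<\beta-\tfrac12$ and $(1+G_{0}W)u=0$. Setting $f=\U wu\in\K$, one has $Wu=w^{*}\U wu=w^{*}f$, hence $u=-G_{0}Wu=-G_{0}w^{*}f$. To conclude $u\in\nul(H)$, apply Lemma~\ref{lem-jk}(1) exactly as above, which yields $-\Delta u=-w^{*}f=-Wu$ and therefore $Hu=0$; the regularity needed, $u\in H^{1,-1/2-0}$, then follows by reinserting $w^{*}f\in H^{-1,\beta/2}$ into $u=-G_{0}w^{*}f$ and using~\eqref{G0-map}. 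Finally, compute
\begin{equation*}
M_{0}f=\U f+wG_{0}w^{*}f=\U^{2}wu-wG_{0}Wu=wu+w(u-u-G_{0}Wu)-wu=w\bigl(u+G_{0}Wu\bigr)=0,
\end{equation*}
where we used $\U^{2}=\idd_{\K}$ and $u=-G_{0}w^{*}f=-G_{0}Wu$ in the rearrangement; thus $f\in\ker(M_{0})$.

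There is no real obstacle: the only delicate point is making sure at each step that $w^{*}f$ lands in a space on which the identities of Lemma~\ref{lem-jk} apply. This is precisely what the hypothesis $\beta>3$ (hence $\beta/2>3/2$) guarantees, so that $w^{*}f\in H^{-1,\beta/2}\subset H^{-1,1/2+0}$ and $G_{0}w^{*}f\in H^{1,-1/2-0}$, and the equivalence between $u\in M$, $u\in\nul(H)$ obtained via $-G_{0}w^{*}f$, and $f=\U wu\in\ker(M_{0})$ closes as a direct algebraic identity using $\U^{2}=\idd_{\K}$.
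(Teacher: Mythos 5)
Your proof takes essentially the same route as the paper: factorize $W=w^{*}\U w$, use $\U^{2}=\idd_{\K}$ to pass between $f$ and $u$, verify the mapping properties via \eqref{w-map} and \eqref{G0-map} under $\beta>3$, and apply Lemma~\ref{lem-jk}(1) to conclude $Hu=0$. The only thing to clean up is the displayed chain verifying $M_{0}f=0$ in part~(2): since $wG_{0}w^{*}f=wG_{0}w^{*}\U wu=wG_{0}Wu$ (with a \emph{plus} sign) and $\U^{2}wu=wu$, the computation should simply read $M_{0}f=\U f+wG_{0}w^{*}f=wu+wG_{0}Wu=w\bigl(u+G_{0}Wu\bigr)=0$; the intermediate expression $wu+w(u-u-G_{0}Wu)-wu$ does not agree with the terms on either side of it, though your final line and the conclusion are correct. (Equivalently, one may use $G_{0}w^{*}f=-u$ to get $M_{0}f=wu-wu=0$, or argue as in the paper by writing $f=-\U wG_{0}w^{*}f$ and multiplying by $\U$.)
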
 

\begin{proof} 
To prove part (1), assume $f\in\ker M_0$, i.e.~$(\U+wG_0w^*)f=0$. 
Define $u=-G_0w^*f$. 
Since $w^*f\in H^{-1,\beta/2}\subset H^{-1,\frac32+0}$, 
we have $u\in H^{1,-\frac12-0}$ by \eqref{G0-map}.
Lemma~\ref{lem-jk}(1) implies $H_0G_0w^*f=w^*f$ or
$H_0u=-w^*f=w^*\U wG_0w^*f=-Wu$. Thus $u\in\nul(H)$. 
To prove $u\in M$, note that $f\in\ker M_0$ implies $f=\U\U f
=-\U wG_0w^*f=\U wu$. Hence $u=-G_0w^*f=-G_0w^*\U w u=-G_0Wu$, and
$u\in M$ follows.

To prove part (2), let $u\in M$. Then $u\in H^{1,-\frac12-0}$ and
$Wu\in H^{-1,\beta-\frac12-0}\subset H^{-1,\frac12 + 0}$. Lemma~\ref{lem-jk}(1) implies $H_0u=-H_0G_0Wu=-Wu$ and $u\in\nul(H)$ follows. Let $f=\U wu$. Then $f=-\U wG_0Wu=-\U w G_0 w^*\U wu
=-\U w G_0 w^*f$, such that $f\in\ker M_0$ follows.
\end{proof}

Next we define the operators $T_1\colon \ker(M_0) \to M$ and $T_2\colon M \to  \ker(M_0)$ by
\begin{equation} \label{T-12}
T_1 = -G_0 w^*\big|_{\ker(M_0) } \quad 
\text{and} \quad T_2 =  \U w\big|_M.
\end{equation}

\smallskip

\begin{prop} \label{prop-M0}
We have 
\begin{equation} \label{M0-dim}
\dim \ker(M_0) <\infty.
\end{equation}
Moreover, $0$ is an isolated point of $\sigma(M_0)$.
\end{prop}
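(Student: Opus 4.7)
The strategy is to write $M_0 = \U + K$ with $K := wG_0w^*$ and to show that $K$ is a self-adjoint compact operator on $\K$. Since $\U$ is self-adjoint with $\U^2 = \idd_\K$, its essential spectrum satisfies $\sigma_{\mathrm{ess}}(\U) \subseteq \{-1,+1\}$, and Weyl's theorem on invariance of the essential spectrum under compact perturbations then gives $\sigma_{\mathrm{ess}}(M_0) = \sigma_{\mathrm{ess}}(\U) \subseteq \{-1,+1\}$. In particular $0 \notin \sigma_{\mathrm{ess}}(M_0)$, which yields both claims at once: $0$ is isolated in $\sigma(M_0)$ (either lying in the resolvent set or being an isolated eigenvalue of finite multiplicity), and $\ker M_0$ is finite-dimensional (being either trivial or the eigenspace at an isolated eigenvalue lying outside $\sigma_{\mathrm{ess}}$).

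It remains to prove that $K = wG_0w^*$ is compact on $\K$. Since $G_0 = G_0^*$, the operator $K$ is self-adjoint, and by the spectral theorem for self-adjoint operators, $K$ is compact whenever $K^2$ is. We compute $K^2 = w G_0 (w^*w) G_0 w^*$, and observe that the middle factor $w^*w$ has exactly the same structure as the perturbation $W = w^*\U w$ treated in Lemma~\ref{compact}, only with the self-adjoint unitary matrix $\U$ replaced by $\idd_\K$. The argument behind Lemma~\ref{compact}—which relies on the decay of the coefficients of $w$ together with local Rellich-Kondrachov compactness, and is insensitive to which bounded matrix sits between $w^*$ and $w$—therefore also yields that $w^*w \colon H^{1,s} \to H^{-1, s+\beta'}$ is compact for any $s \in \R$ and $\beta' < \beta$.

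Granting this, pick $\epsilon > 0$ sufficiently small (permitted since $\beta > 2$). The chain
\begin{equation*}
\K \xrightarrow{w^*} H^{-1,\beta/2} \xrightarrow{G_0} H^{1,-\beta/2} \xrightarrow{w^*w} H^{-1,\beta/2-\epsilon} \xrightarrow{G_0} H^{1,-\beta/2+\epsilon} \xrightarrow{w} \K
\end{equation*}
has four bounded arrows—by \eqref{G0-map} and \eqref{w-map}, the conditions $s+s'\geq 2$ and $s,s' > \tfrac12$ required of $G_0$ being satisfied for $\epsilon$ small—and one compact arrow in the middle. Hence $K^2$ is compact on $\K$, and consequently so is $K$. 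The only delicate point of the proof is this compactness of $w^*w$, obtained by adapting the proof of Lemma~\ref{compact} to the trivial middle matrix $\idd_\K$; once this is in hand the rest of the argument is essentially structural.
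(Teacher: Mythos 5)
Your overall strategy—show $K := wG_0w^*$ is compact and then invoke Weyl's theorem—is cleaner in spirit than the paper's, but it hinges on a claim that is not justified and is in fact false: that $w^*w$ is compact from $H^{1,s}$ to $H^{-1,s+\beta'}$ ``by the same argument as for $W$, since the argument is insensitive to which bounded matrix sits between $w^*$ and $w$.'' The matrix $\U$ is not innocuous here. Its off-diagonal $-\idd_3$ blocks are precisely what make $W = w^*\U w$ a \emph{first-order} differential operator: they pair the multiplication components $C_j$ with the differential components $D_jP_j$, producing $C_jD_jP_j = A_jP_j$, etc. In $w^*w$ the component $D_jP_j$ instead pairs with itself, producing the genuinely \emph{second-order} terms $P_jD_j^2P_j$. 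A second-order operator with $\langle x\rangle^{-\beta}$ decay maps $H^{1,s}\to H^{-1,s+\beta}$ boundedly, but this already exhausts the two derivatives available, leaving no room to trade a derivative for the Rellich--Kondrachov gain. Since the embedding $H^{-1,s+\beta}\hookrightarrow H^{-1,s+\beta'}$ for $\beta'<\beta$ drops weight but gains no regularity, it is not compact (oscillating sequences $e^{inx_1}g(x)$ give a Weyl sequence), so your middle arrow is not compact and the chain for $K^2$ does not close. The same obstruction blocks showing $K$ itself compact by other routes.

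This is exactly why the paper avoids claiming $K$ (or $X = \U K$) is compact. Instead it establishes that $X^2 = \U w G_0 W G_0 w^*$ is compact, which works because the middle factor is now the first-order $W$ (thanks to the $\U$ sandwiched between the two copies of $w^*\cdots w$), and then runs a Weyl sequence argument: if $u_n$ is an orthonormal Weyl sequence for $M_0$ at $0$, then $(1+X)u_n=\U M_0u_n\to 0$ and $X^2u_n\to0$, whence $Xu_n = X(1+X)u_n - X^2u_n\to0$ and $u_n\to0$, a contradiction. This needs only compactness of $X^2$, not of $X$. The paper also proves the first part (finite-dimensionality of $\ker M_0$) separately, by exhibiting explicit inverse bijections $T_1,T_2$ between $\ker M_0$ and the finite-dimensional space $M=\ker(1+G_0W)$, rather than deducing it from the spectral structure; your route would be fine there if the compactness step held, but it does not.
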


\begin{proof}
From \eqref{T-12} we get $T_1 T_2 = -G_0 w^* \U w=-G_0 W$, which is the identity operator on $M$. On the other hand $T_2 T_1 = -\U w G_0 w^*$ is the identity operator on $\ker(M_0)$. 
Hence, in view of Lemma~\ref{lem-ker-M0} and~\eqref{NM-dim} we have $\dim\ker(M_0) = \dim M = \dim N < \infty$. 

To prove the second part of the claim we argue by contradiction. Suppose that $0\in\sigma_{\textup{ess}}(M_0)$. Then there exists an orthonormal Weyl sequence $\{u_n\}$ in $\K$ such that 
\begin{equation} \label{un-seq}
\norm{M_0 u_n}_\K \to 0 \quad \text{as $n\to \infty$}. 
\end{equation} 
In particular,  $\{u_n\}$ converges weakly to $0$ in $\K$. 
Let $X= \U w G_0 w^*$. Since $G_0 W$ is compact on $H^{1, -s}$, $\frac12<s<\beta-\frac12$, it follows that the operator 
\begin{equation*}
X^2 = \U w G_0 W G_0 w^* 
\end{equation*}
is compact on $\K$. Hence $X^2 u_n \to 0$  in $\K$. Since $(1+X) u_n = \U M_0 u_n\to 0$ in $\K$ as well (see \eqref{un-seq}) we deduce that 
\begin{equation*}
X u_n= X(1+X) u_n -X^2 u_n \to 0 \qquad \text{in} \quad \K,
\end{equation*}
which implies that $\norm{u_n}_\K \to 0$. However, this is in contradiction with the fact that the sequence $\{u_n\}$ is orthonormal in $\K$.
\end{proof}

\begin{lem}\label{lem-orthogonal} 
Let $u\in \nul(H)$. Then
\begin{equation} \label{L2-condition}
u \in L^2(\R^3) \quad \Leftrightarrow\quad  \langle  u, W 1 \rangle=0.
\end{equation}
\end{lem}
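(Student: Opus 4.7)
The plan is to use $Hu=0$ to represent $u$ as a Newtonian potential of $-Wu$, read off the $|x|^{-1}$ term in its expansion at infinity, and observe that its coefficient is (up to a nonzero constant) $\langle u, W1\rangle$. Since in dimension three $|x|^{-1}\notin L^2$ at infinity while $|x|^{-2}\in L^2$, $L^2$-membership of $u$ is precisely the vanishing of that coefficient.

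First, I would derive the integral identity $u=-G_0(Wu)$. Since $u\in H^{1,-\frac12-0}\hookrightarrow H^{0,-3/2}$ and $-\Delta u=-Wu$ distributionally, Lemma~\ref{compact} together with Assumption~\ref{ass-BV} places $Wu\in H^{-1,\beta-\frac12-0}\subset H^{-1,\frac12+0}$ (using $\beta>2$). The hypotheses of Lemma~\ref{lem-jk}(2) are then met, yielding $u=G_0(-\Delta u)=-G_0(Wu)$.

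Next I would extract the spatial asymptotics. Writing the Newtonian kernel explicitly,
\begin{equation*}
u(x) = -\frac{1}{4\pi|x|}\int_{\R^3}Wu(y)\,dy - \frac{1}{4\pi}\int_{\R^3}\Bigl(\frac{1}{|x-y|}-\frac{1}{|x|}\Bigr)Wu(y)\,dy,
\end{equation*}
I would show that the second term defines an $L^2(\R^3)$ function. Using the pointwise bound $\bigl|\,|x-y|^{-1}-|x|^{-1}\,\bigr|\lesssim |y|/(|x|\,|x-y|)$ and splitting the $y$-integral at $|y|=|x|/2$ (the low-$|y|$ portion giving an $O(|x|^{-2})$ tail and the high-$|y|$ portion handled by Cauchy--Schwarz against the weighted $L^2$-integrability of $Wu$), this reduces to the weighted integrability of $Wu$. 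The mapping $W\in\B(H^{1,-s},H^{-1,-s+\beta'})$ for $\beta'<\beta$, combined with the regularity of $u$, places $Wu\in L^1(\R^3)\cap L^1(|y|\,dy)$, the finiteness of the first moment requiring the decay margin $\beta>3$ standing in the surrounding results. Consequently
\begin{equation*}
u\in L^2(\R^3) \;\Longleftrightarrow\; \int_{\R^3}Wu(y)\,dy=0.
\end{equation*}

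Finally I would translate the integrated condition into $\langle u, W1\rangle=0$. A direct calculation gives $W\cdot 1 = i\,\nabla\!\cdot\!A+|A|^2+V$, a function that decays polynomially by Assumption~\ref{ass-BV} and Lemma~\ref{A-choice}, hence pairs with $u\in H^{1,-\frac12-0}$ without difficulty. Integrating the first-order part of $W$ by parts against the constant function $1$ (the boundary terms over spheres of radius $R\to\infty$ vanishing by the decay of $A$ and the weighted regularity of $u$) yields $\int Wu\,dy=\overline{\langle u, W1\rangle}$, whence the claimed equivalence. The delicate step is the $L^2$-estimate on the remainder in the second paragraph, which is where the stronger decay assumption $\beta>3$ effectively enters; the rest is routine given the mapping properties of $G_0$ and $W$ already established.
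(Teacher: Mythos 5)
Your approach — represent $u$ via its Newtonian potential and read off the coefficient of $|x|^{-1}$ — is the classical ``physical'' argument and is genuinely different from the paper's. The paper instead works entirely at the distributional level in weighted Sobolev spaces: for the forward direction it invokes \cite[Lemma~2.5]{JK} (if $\langle 1,v\rangle=0$ then $G_0 v$ gains two powers of decay) together with a bootstrap that exploits $\beta>2$, and for the backward direction it uses a Fourier-transform argument ($\widehat{\Delta u}(0)=0$). Your method, if carried out correctly, gives the same conclusion but requires stronger decay on $B$, so the paper's abstract route is the more economical one.

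There is, however, a genuine gap in your second paragraph. You treat $Wu$ as an honest function in $L^1(\R^3)\cap L^1(|y|\,dy)$ (or in a weighted $L^2$ space) so that the pointwise decomposition $u(x)=-\frac{1}{4\pi|x|}\int Wu\,dy - \frac{1}{4\pi}\int\bigl(\frac{1}{|x-y|}-\frac{1}{|x|}\bigr)Wu(y)\,dy$ makes sense and so that Cauchy--Schwarz can be applied. But $W$ is a \emph{first-order} operator: $Wu = i(\nabla\!\cdot\!A)u+2iA\!\cdot\!\nabla u+|A|^2u+Vu$, and the term $(\nabla\!\cdot\!A)u$ is not controlled. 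Lemma~\ref{A-choice} only gives $A\in C^1$ with the decay~\eqref{A-decay}; Assumption~\ref{ass-BV} imposes no decay on $\nabla B$, so $\nabla\!\cdot\!A$ may fail to decay (indeed may be unbounded), and then $Wu\notin L^1$ and the pointwise formula you wrote does not converge. (Decay of $\nabla\!\cdot\!A$ only becomes available under the stronger Assumption~\ref{ass-B-2}, which is \emph{not} in force for this lemma.) The mapping property you quote, $W\in\B(H^{1,-s},H^{-1,\cdot})$, lands in $H^{-1}$, not in a weighted $L^2$ space, precisely because of this first-order term. The fix is to keep the first-order part in divergence form, writing $Wu=\nabla\!\cdot\!(iAu)+g$ with $g=iA\!\cdot\!\nabla u+|A|^2u+Vu\in L^{2,\beta-\frac12-0}$, integrate the divergence term by parts against $|x-y|^{-1}$ so that it produces an $O(|x|^{-2})$ (hence $L^2$-at-infinity) contribution, and then apply your tail estimates to the zeroth-order piece $g$ only; the coefficient of $|x|^{-1}$ is then $\int g\,dy=\overline{\langle u,W1\rangle}$. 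This is essentially what \cite[Lemma~2.5]{JK} packages abstractly, and the paper's proof uses that lemma to sidestep the issue entirely.
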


\begin{proof}
If $u\in \nul(H)$, then $u\in H^{1, -\frac 12-0}$ by definition, and therefore $W u \in H^{-1, s}$ for any $s <\min \{\beta  -\frac 12, \frac 52\}$.  Lemma~\ref{lem-jk}(2) then says that $u  = -G_0 W u$. 

Now assume that $\langle  u, W 1 \rangle=0$. Then by \cite[Lemma~2.5]{JK} we have $u=-G_0 Wu \in  H^{1, s-2}$. Hence $W u\in  H^{-1, s +\delta}$ with $\delta =\beta -2>0$. Repeating this
argument a sufficient number of times, we conclude that $W u\in  H^{-1, \frac 52-0}$, and therefore $u\in  H^{1, \frac 12-0}$. 

To prove the opposite implication, suppose that  $u \in L^2(\R^3)$. Then $u_1= \Delta  u= Wu \in H^{-1, \frac 32+0}$, which implies that $(1-\Delta)^{-\frac 12} u_1 \in L^1(\R^3)$. 
Hence $(1+\abs{\,\cdot\,}^2)^{-\frac 12}\,  \widehat{u}_1$ is continuous, 
and therefore so is $\widehat{u}_1$. Since $\widehat{u}_1(p)= -\abs{p}^2 \hat u(p)$ and $\hat u\in L^2(\R^3)$, we must have $\widehat{u}_1(0)=0$. This gives $\ip{u}{W 1}=0$. 
\end{proof}

Next we need to classify the point $0$ in the spectrum of $H$. The classification is the same as in~\cite{JK,jn1}. We recall it for completeness. 

Let $S$ denote the orthogonal projection onto $\ker M_0$ in $\K$, cf.~Lemma \ref{lem-jn}, and let $S_1$ denote the orthogonal projection on $\ker SM_1S$ in $\K$. By Proposition~\ref{prop-M0} $\ker M_0$ is finite dimensional, and by the definition of $M_1$ (see \eqref{Mj-eq}) we have
\begin{equation} \label{sm1s}
SM_1S=-\frac{1}{4\pi}\ket{Sw1}\bra{Sw1}.
\end{equation}
It follows that $\rank S_1\geq \rank S -1$. Note that $f\in\ker SM_1S$ if and only if $\ip{f}{Sw1}=0$.

\smallskip

The classification is then as follows (cf.~\cite{jn1}):

\begin{enumerate}
\item[(R)] The regular case: $S=0$. In this case $M(\kappa)$ is invertible.
\item[(E1)] The first exceptional case: $\rank S=1$ and $S_1=0$. In this case we have a threshold resonance.
\item[(E2)] The second exceptional case: $\rank S=\rank S_1\geq1$. In this case zero is an eigenvalue of multiplicity $\rank S$.
\item[(E3)] The third exceptional case: $\rank S\geq 2$, $\rank S_1=\rank S-1$.
In this case we have a threshold resonance and zero is an eigenvalue with multiplicity $\rank S - 1$.
\end{enumerate}

\section{Main results}
\label{sec-main}
In this section we briefly state the leading terms in the resolvent expansions around zero in the four cases. We start with the regular case and give the proof for completeness. Note that we also give more precise mapping properties than in~\cite{jn3}.

\begin{thm} \label{thm-regular}
Assume that zero is a regular point for $H$. Let 
Assumption~\ref{ass-BV} be satisfied for some 
$\beta > 5$ and let $s>\frac 52$. Then
\begin{equation}  \label{exp-regular}
R(\kappa) = F_0 +\kappa F_1 +\mathcal{O}(\kappa^{2})
\end{equation}
in $\B(-1,s;1,-s)$, where
\begin{align} 
F_0 &= (\id +G_0 W)^{-1} G_0\in\B(-1,s;1,-s),\quad s>1,
\label{RF0}
\\
F_1 &= (\id +G_0 W)^{-1} G_1(\id + W G_0)^{-1}\in\B(-1,s;1,-s),\quad s>\tfrac32.
\label{RF1}
\end{align}
\end{thm}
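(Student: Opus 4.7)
My plan is to work directly from the factored resolvent identity \eqref{res-formula-2}: the expansion of $R(\kappa)$ is obtained by combining the known expansions of $R_0(\kappa)$ and $M(\kappa)^{-1}$ around $\kappa=0$, and then rewriting the resulting coefficients in closed form using the factorization $W=w^* \U w$. On the free side, Lemma~\ref{free-exp} with $p=1$, which requires $s>\frac52$, gives $R_0(\kappa)=G_0+\kappa G_1+\cO(\kappa^2)$ in $\B(-1,s;1,-s)$. On the perturbed side, Lemma~\ref{lemma43} with $p=1$, permissible because $\beta>5$, gives $M(\kappa)=M_0+\kappa M_1+\cO(\kappa^2)$ in $\B(\K)$.

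The regular case hypothesis enters next through the invertibility of $M(\kappa)$ for small $\kappa$. Since $S=0$ one has $\ker M_0=\{0\}$, and by Proposition~\ref{prop-M0} the point $0$ is isolated in $\sigma(M_0)$, so $M_0$ is boundedly invertible on $\K$; a Neumann series then yields $M(\kappa)^{-1}=M_0^{-1}-\kappa M_0^{-1}M_1 M_0^{-1}+\cO(\kappa^2)$. Substituting into \eqref{res-formula-2}, using $M_1=wG_1w^*$, and grouping the four first-order terms, I expect to land at
\begin{align*}
R(\kappa) &= \bigl(G_0-G_0 w^*M_0^{-1}wG_0\bigr)\\
&\quad + \kappa\,(\id-G_0 w^*M_0^{-1}w)\,G_1\,(\id-w^*M_0^{-1}wG_0)+\cO(\kappa^2).
\end{align*}

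The only non-routine step will be the algebraic identification
\[
\id-G_0 w^*M_0^{-1}w=(\id+G_0 W)^{-1}\quad\text{on }H^{1,-s},
\]
together with its mirror $\id-w^*M_0^{-1}wG_0=(\id+WG_0)^{-1}$ on $H^{-1,s}$. Both should fall out of a short calculation exploiting $wG_0w^*=M_0-\U$ and $\U^2=\idd_\K$, once the invertibility of $\id+G_0 W$ on $H^{1,-s}$ is secured. That last point is the Fredholm part of the argument: $G_0 W$ is compact on $H^{1,-s}$ by Lemma~\ref{compact}, and its kernel $M$ is trivial in the regular case by Lemma~\ref{lem-ker-M0}. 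Plugging the two identities into the coefficients above then produces $F_0$ and $F_1$ in the closed forms \eqref{RF0}--\eqref{RF1}. The main technical point I anticipate is tracking weights carefully through the Fredholm step in order to reach the sharp ranges $s>1$ and $s>\frac32$ stated in the theorem, rather than only $s>\frac52$ that the expansion itself provides directly.
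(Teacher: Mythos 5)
Your proposal follows essentially the same route as the paper's proof: invertibility of $M_0$ via Proposition~\ref{prop-M0} and the regular-case hypothesis, Neumann series for $M(\kappa)^{-1}$, substitution into the factored resolvent identity~\eqref{res-formula-2}, and the algebraic identification $\id-G_0w^*M_0^{-1}w=(\id+G_0W)^{-1}$ established through $wG_0w^*=M_0-\U$ and $\U^2=\idd_\K$. The only point you leave partly implicit — the weight-tracking that sharpens the ranges to $s>1$ for $F_0$ and $s>\tfrac32$ for $F_1$ — is handled in the paper by combining~\eqref{G0-map}, \eqref{Gj-map}, and~\eqref{w-map} with the boundedness of $(\id+G_0W)^{-1}$ and its dual on $H^{1,-s}$ for all $\tfrac12<s<\beta-\tfrac12$.
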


\begin{proof}
If  $0$ is a regular point for $H$, then $\ker M_0 = \{0\}$. In view of Lemma~\ref{lem-ker-M0} we thus have $\ker(\id +G_0 W)= \{0\}$. 
Since $G_0 W$ is compact in $ H^{1,-s}(\R^3)$ for any $\frac 12 < s <\beta -\frac 12$, it follows that $(\id +G_0 W)^{-1}$
exists and is bounded on $H^{1,-s}(\R^3)$. By duality, $(\id + W G_0)^{-1}$ is  bounded on $H^{-1,s}(\R^3)$ for any $\frac 12 < s <\beta -\frac 12$.
Using \eqref{G0-map}, \eqref{Gj-map}, and \eqref{w-map} the results \eqref{RF0} and \eqref{RF1} follow.

The proof of \eqref{exp-regular} follows the line of arguments used in \cite[Section~3.4]{jn3}. Since $M_0$ is invertible in $\K$ (see Proposition~\ref{prop-M0}), the Neumann series in combination
with equations \eqref{M-exp} and \eqref{Mj-eq} gives 
\begin{equation} 
M(\kappa)^{-1} = M_0^{-1} -\kappa M_0^{-1} M_1 M_0^{-1} + \mathcal{O}(\kappa^{2}) = M_0^{-1} -\kappa M_0^{-1} w G_1 w^* M_0^{-1} + \mathcal{O}(\kappa^{2}) .
\end{equation}
From \eqref{res-formula-2} we the get the expansion
 \eqref{exp-regular} with 
\begin{equation*}
F_0 = G_0 -G_0w^* M_0^{-1} w G_0, \quad F_1= (\id- G_0w^*  M_0^{-1} w) G_1 (\id -w^*  M_0^{-1} w G_0). 
\end{equation*}
It remains to note that, similarly to~\cite[Section~3.4]{jn3},
\begin{align*}
\id- G_0w^*  M_0^{-1} w &= \id -G_0w^* (\U +w G_0 w^*)^{-1} w = \id -G_0w^* \U (\id +w G_0 w^*\U)^{-1} w\\
& = \id -G_0w^* \U w (\id + G_0 w^*\U w)^{-1} = \id -G_0 W(\id+G_0 W)^{-1}\\
&= (\id +G_0 W)^{-1} .
\end{align*}
Note that equalities hold as operators in $\B(1,-s;1,-s)$, $\frac12<s<\beta-\frac12$.
This result together with its adjoint imply equations \eqref{RF0}
and \eqref{RF1}.
\end{proof}

\begin{rem} \label{rem-KK}
The fact that $R(\kappa)$ remains uniformly bounded for $\kappa\to 0$ if zero is a regular point for $H$ was already 
proved in \cite[Sec.~3]{egs}, see also \cite[Sec.~3.2]{kk}. 
\end{rem}

In the cases (E1) and (E3) a threshold resonance occurs. We need to define a specific corresponding resonance function.
Let $P_0$ denote the orthogonal projection in $L^2(\R^3)$ onto the eigenspace corresponding to eigenvalue zero of $H$. In case (E1) we take $P_0=0$. Take $f\in\ker M_0$ with $\norm{f}_{\K}=1$ and $\ip{f}{w1}\neq0$. Define
\begin{equation} \label{psi-c-def}
\psi_c=\frac{\sqrt{4\pi}\ip{f}{w1}}{\abs{\ip{f}{w1}}^2}
\bigl(
G_0w^*f-P_0WG_2w^*f
\bigr).
\end{equation}

We need the following lemma.
\begin{lem}[{\cite[Lemma~2.6]{JK}}]
Let Assumption~\ref{ass-BV} be satisfied with $\beta>5$. Let $f_j\in\K$ with $\ip{f_j}{w1}=0$, $j=1,2$. Then
\begin{equation}
\ip{w^*f_1}{G_2w^*f_2}=-\ip{G_0w^*f_1}{G_0w^*f_2}.
\end{equation}
\end{lem}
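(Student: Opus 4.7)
The plan is to reduce the identity to a formal integration by parts via the elementary kernel relation
\begin{equation*}
\Delta_x G_2(x,y) = G_0(x,y),
\end{equation*}
which follows from $\Delta \abs{x-y} = 2/\abs{x-y}$ in $\R^3$. Setting $g_j := w^\ast f_j$, the hypothesis $\ip{f_j}{w1}=0$ is equivalent to the vanishing-mean condition $\ip{g_j}{1}=0$. Combined with Lemma~\ref{lem-jk}(1), this formally yields
\begin{equation*}
\ip{g_1}{G_2 g_2} = \ip{-\Delta G_0 g_1}{G_2 g_2} = \ip{G_0 g_1}{-\Delta G_2 g_2} = -\ip{G_0 g_1}{G_0 g_2},
\end{equation*}
which is the claim.

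All that remains is to justify the middle (integration by parts) step. By \eqref{w-map} and $\beta>5$, $g_j\in H^{-1,\beta/2}\subset H^{-1,\frac{5}{2}+0}$, so Lemma~\ref{free-exp} provides $G_0 g_j$ and $G_2 g_j$ as bona fide elements of suitable weighted Sobolev spaces. The vanishing-mean condition forces $G_0 g_j(x) = \cO(\abs{x}^{-2})$ via the multipole expansion of the Newtonian potential, and it kills the leading linear term in $G_2 g_j$, leaving $G_2 g_j(x) = \cO(1)$. A standard Green's identity on balls $B_R$ then produces boundary contributions of order $R^{-3}\cdot R^2 \to 0$ as $R\to\infty$, which legitimizes the integration by parts.

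A cleaner alternative that avoids any boundary analysis is to regularize using
\begin{equation*}
R_0(\kappa)^2 = -\frac{1}{2\kappa}\,\partial_\kappa R_0(\kappa),
\end{equation*}
which follows from differentiating $R_0(-\kappa^2)=(-\Delta+\kappa^2)^{-1}$ with respect to $\kappa$. Inserting the expansion of Lemma~\ref{free-exp} gives $R_0(\kappa)^2 = -\frac{1}{2\kappa}G_1 - G_2 + \cO(\kappa)$, and the singular $\kappa^{-1}$ contribution annihilates $g_j$ because $G_1 g_j \propto \int g_j = 0$. Passing to the limit $\kappa\to 0^+$ in the identity $\ip{R_0(\kappa)g_1}{R_0(\kappa)g_2} = \ip{g_1}{R_0(\kappa)^2 g_2}$, valid because $R_0(\kappa)$ is self-adjoint for $\kappa>0$, recovers the claim. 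The main obstacle in either route is precisely the use of the mean-zero hypothesis: without it, neither $G_0 g_j$ nor $R_0(\kappa)^2 g_j$ has the decay needed for the limits and dualities to make sense, and indeed the stated identity would fail.
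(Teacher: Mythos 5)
Your argument is correct, and your first route is essentially the one used in \cite[Lemma~2.6]{JK}, which the paper here simply cites. The two pillars are exactly right: the pointwise kernel identity $\Delta_x G_2(x,y)=G_0(x,y)$ (since $\Delta|x-y|=2/|x-y|$ in $\R^3$ and $G_2=|x-y|/8\pi$, $G_0=1/(4\pi|x-y|)$), and the mean-zero hypothesis $\ip{g_j}{1}=0$ (equivalently $\ip{f_j}{w1}=0$), which both annihilates $G_1g_j$ and upgrades the decay $G_0g_j=\cO(|x|^{-2})$, $\nabla G_0g_j=\cO(|x|^{-3})$, $G_2g_j=\cO(1)$, $\nabla G_2g_j=\cO(|x|^{-1})$. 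With these bounds the Green's identity on $B_R$ has boundary contributions of net order $R^{-1}$, not $R^{-3}\cdot R^2$ flatly as you wrote (one needs the gradient of $G_2g_j$ as well as $G_0g_j$ to decay, which the mean-zero condition delivers); the conclusion is unchanged.

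Your second route via $R_0(\kappa)^2=-\tfrac{1}{2\kappa}\partial_\kappa R_0(\kappa)$ is a clean alternative and worth noting: expanding gives $R_0(\kappa)^2=-\tfrac{1}{2\kappa}G_1-G_2+\cO(\kappa)$, the singular $\kappa^{-1}$ term vanishes on mean-zero data because $G_1=-\tfrac{1}{4\pi}\ket{1}\bra{1}$, and self-adjointness of $R_0(\kappa)$ for $\kappa>0$ gives $\ip{R_0(\kappa)g_1}{R_0(\kappa)g_2}\to\ip{G_0g_1}{G_0g_2}$. The only step you gloss over here is the $L^2$-convergence $R_0(\kappa)g_j\to G_0g_j$; it again hinges on the mean-zero condition, which places $G_0g_j$ in $L^2$ near infinity, and on the $\cO(\kappa^2)$ rate coming from $G_1g_j=0$, so the gap is fillable. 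Either way you have correctly isolated the role of the hypothesis $\ip{f_j}{w1}=0$, without which the identity fails.
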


The results in the three exceptional cases are stated in the next theorem.

\begin{thm}\label{thm53}
Assume that zero is an exceptional point for $H$. Let Assumption~\ref{ass-BV} be satisfied for $\beta>9$. Assume $s>\frac92$. Then
\begin{equation} \label{exp-exceptional}
R(\kappa)=\kappa^{-2}F_{-2}+\kappa^{-1}F_{-1}+\cO(1)
\end{equation}
as $\kappa\to0$ in $\B(-1,s;1,-s)$.

If zero is an exceptional point of the first kind, we have
\begin{equation}
F_{-2}=0,\quad F_{-1}=\ket{\psi_c}\bra{\psi_c}.
\end{equation} 

If zero is an exceptional point of the second kind, we have
\begin{equation}
F_{-2}=P_0,\quad F_{-1}=P_0WG_3WP_0.
\end{equation}

If zero is an exceptional point of the third kind, we have
\begin{equation}
F_{-2}=P_0,\quad F_{-1}=\ket{\psi_c}\bra{\psi_c}+P_0WG_3WP_0.
\end{equation}
\end{thm}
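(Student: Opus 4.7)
The plan is to apply the inversion formula of Lemma~\ref{lem-jn} to the factored resolvent equation \eqref{res-formula-2}, expand all quantities in $\kappa$ using Lemma~\ref{lemma43}, and then identify the coefficients case by case. Since $\beta>9$, Lemma~\ref{lemma43} is available up to $p=4$, which turns out to be just what is needed to get the $\kappa^{-1}$ coefficient to order $\cO(1)$ after the double division by $\kappa^2$. Throughout, the pivotal structural fact is that $G_1=-\tfrac{1}{4\pi}|1\rangle\langle 1|$ and hence
\begin{equation*}
M_1 = w G_1 w^* = -\tfrac{1}{4\pi}\ket{w1}\bra{w1},
\end{equation*}
so $SM_1=-\tfrac{1}{4\pi}\ket{Sw1}\bra{w1}$ vanishes the moment $Sw1=0$, i.e.\ exactly when $S=S_1$.

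I would first write, using Lemma~\ref{lem-jn},
\begin{equation*}
M(\kappa)^{-1}=(M(\kappa)+S)^{-1}+\tfrac{1}{\kappa}(M(\kappa)+S)^{-1}SQ(\kappa)^{-1}S(M(\kappa)+S)^{-1},
\end{equation*}
and expand $Q(\kappa)=SM_1S+\kappa\bigl(SM_2S-SM_1(M_0+S)^{-1}M_1S\bigr)+\cO(\kappa^2)$, using $(M_0+S)^{-1}S=S$. For the first exceptional case $\rank S=1$ and $SM_1S=-\tfrac{1}{4\pi}\ket{Sw1}\bra{Sw1}\ne 0$ is invertible on $S\K$, so $Q(\kappa)^{-1}=(SM_1S)^{-1}+\cO(\kappa)$ on $S\K$, producing a $\kappa^{-1}$ singularity in $M(\kappa)^{-1}$. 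Inserting this into \eqref{res-formula-2} and using $R_0(\kappa)w^*f=G_0w^*f+\cO(\kappa)$ for $f\in S\K$, the $\kappa^{-1}$ coefficient of $R(\kappa)$ becomes $-G_0w^*S(SM_1S)^{-1}Sw G_0$; a direct computation with the unique normalised $f\in S\K$ (using $P_0=0$ here) matches this to $\ket{\psi_c}\bra{\psi_c}$ with $\psi_c$ from \eqref{psi-c-def}.

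In the second exceptional case $S_1=S$ forces $Sw1=0$ and hence $SM_1=0$; consequently $Q(\kappa)=\kappa SM_2S+\cO(\kappa^2)$. The key point is that $SM_2S$ is invertible on $S\K$: choosing a basis $\{f_j\}$ of $\ker M_0$, setting $u_j=-G_0w^*f_j$ (these are $L^2$ eigenfunctions by Lemma~\ref{lem-orthogonal}) and using the lemma preceding Theorem~\ref{thm53} gives $(SM_2S)_{jk}=-\langle u_j,u_k\rangle$, the negative of the Gram matrix. Then $Q(\kappa)^{-1}=\kappa^{-1}(SM_2S)^{-1}+\cO(1)$, and unravelling the matrix identities shows that the $\kappa^{-2}$ coefficient $-G_0w^*S(SM_2S)^{-1}SwG_0$ of $R(\kappa)$ is precisely the projection $P_0$ onto $\myspan\{u_j\}$. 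For the $\kappa^{-1}$ coefficient one collects the next-to-leading contributions (the $\cO(1)$ term in $Q(\kappa)^{-1}$, the cross terms from the two inner $(M(\kappa)+S)^{-1}$ factors, and the $\kappa G_1,\kappa^2G_2,\kappa^3G_3$ terms from the two outer $R_0(\kappa)$ factors); using $SM_1=0$ many terms collapse and the remaining sum is identified, after substituting $(SM_2S)^{-1}\leftrightarrow -\text{Gram}^{-1}$ and $wG_jw^*\leftrightarrow M_j$, with $P_0WG_3WP_0$.

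In the third exceptional case $\rank S_1=\rank S-1\ge 1$, so $Q(\kappa)=SM_1S+\cO(\kappa)$ is not invertible on $S\K$. I would apply Lemma~\ref{lem-jn} a second time to $Q(\kappa)$, with projection $S_1$, reducing the problem to an operator $Q_1(\kappa)$ on $S_1\K$. The one-dimensional complement $S-S_1$ behaves as in case (E1) and contributes the $\ket{\psi_c}\bra{\psi_c}$ part (with $\psi_c$ associated to an $f\in S\K$ with $\langle f,w1\rangle\ne 0$), while the $S_1$ block behaves as in (E2) and contributes $F_{-2}=P_0$ and the $P_0WG_3WP_0$ part. The main obstacle here, and the step I expect to be most delicate, is keeping the two blocks decoupled at the $\kappa^{-1}$ level: this is exactly the purpose of the correction $-P_0WG_2w^*f$ in the definition \eqref{psi-c-def} of $\psi_c$, which eliminates the cross-term produced by $SM_2S_1$ and $S_1M_2S$. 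Once this choice of $\psi_c$ is made, the coefficients combine additively to yield $F_{-1}=\ket{\psi_c}\bra{\psi_c}+P_0WG_3WP_0$, and the remainder estimates $\cO(1)$ follow from the quantitative $\cO(\kappa^{p+1})$ bounds in Lemma~\ref{lemma43} together with the mapping properties \eqref{G0-map}, \eqref{Gj-map}, \eqref{w-map}.
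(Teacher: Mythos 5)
Your proposal is correct and follows essentially the same route as the paper, which defers the details to the iterated inversion scheme of Jensen--Nenciu (\cite[Appendix~A]{jn1}, \cite{jn3}): a first application of Lemma~\ref{lem-jn} to $M(\kappa)$ with projection $S$, expansion of $Q(\kappa)$ using $SM_1S=-\tfrac{1}{4\pi}\ket{Sw1}\bra{Sw1}$, the identity $\ip{w^*f_1}{G_2w^*f_2}=-\ip{G_0w^*f_1}{G_0w^*f_2}$ to show $-SM_2S$ is the Gram matrix of the zero eigenfunctions, and a second application of the inversion lemma with $S_1$ in case~(E3) to decouple the resonance block from the eigenfunction block. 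One small slip: with $\beta>9$ Lemma~\ref{lemma43} applies with $p=3$ (not $p=4$, which would require $\beta>11$), but $p=3$ is exactly what your computation uses ($M_1,M_2,M_3$ and $G_0,\ldots,G_3$), so the argument is unaffected.
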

We do not give details of the proof of this theorem. It uses the results stated in Section \ref{sec-factored} and the technique developed in \cite{jn,jn2} and is analogous to the one given in \cite[Appendix]{jn1} and in \cite{jn3}.

\begin{rem}[Gauge invariance]The resolvent expansions stated in Theorems \ref{thm-regular} and \ref{thm53} hold for the specific choice of the vector potential constructed in Lemma \ref{A-choice}. Therefore a comment on the gauge dependence of these results  is in order.  Suppose that $\widetilde A \in  C^1(\R^3;\R^3)$ satisfies $\rt \widetilde A=B$. Then there exists a real-valued function $\vp\in C^1(\R^3)$ with bounded derivatives  such that 
$\widetilde{A}=A-\nabla\vp$, with $A$ given by Lemma \ref{A-choice}. 
Now let
$\cG=e^{-i\vp}\cdot$, $\cG^*=e^{i\vp}\cdot$. Then 
\begin{equation*}
\cG\in\B(1,-s;1,-s),\quad \cG^*\in\B(-1,s;-1,s),
\end{equation*}
and 
$$
\widetilde{R}(\kappa)= ((P-\widetilde{A})^2+V +\kappa^2)^{-1} = \cG R(\kappa) \cG^*.
$$
Hence $\widetilde{R}(\kappa)$ satisfies expansions \eqref{exp-regular} respectively \eqref{exp-exceptional} with coefficients  $F_j$ replaced by $\cG F_j\cG^*$. In other words, the order of magnitude of 
the terms contributing to the expansion is gauge invariant, but the coefficients are not. This is natural since the resolvent itself is not gauge invariant. 

 \end{rem}
\subsection{The case $V\geq 0.$}
\label{ssec-mg-laplace} 
The goal of this subsection is to show that if  $V\geq 0$, then zero is a regular point for $H$. We will need slightly stronger conditions on $B$ than those stated in Assumption \ref{ass-BV}.

\begin{assumption}\label{ass-B-2}
Let $B$ satisfy Assumption \ref{ass-BV} and suppose in addition that 
\begin{equation}
\abs{\partial_{x_j} B(x)}\lesssim   \x^{-\beta-1}
\end{equation}
for all $x\in\R^3$, $j=1,2,3$.
\end{assumption}

 We start with the magnetic Laplacian.  

\begin{lem} \label{lem-mg-laplacian}
Let $V=0$ and let $B$ satisfy Assumption \ref{ass-B-2} for some $\beta >2$. Then $\ker M_0 = \{0\}$. 
\end{lem}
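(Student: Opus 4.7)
The plan is to use the correspondence from Lemma \ref{lem-ker-M0} to translate $\ker M_0=\{0\}$ into the statement that every $h\in M$ vanishes, i.e.\ every $h\in H^{1,-s}$ with $(P-A)^2 h=0$ (since $V=0$) equals zero. I would obtain this by first deriving sharp pointwise asymptotics of $h$ and $\nabla h$ at infinity, then running a Green's-identity computation to get $(P-A)h=0$, and finally using the magnetic zero-mode structure together with unique continuation.

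\textbf{Step 1 (asymptotics).} Since $Hh=0$ gives $-\Delta h=-Wh$, and under Assumption \ref{ass-B-2} the gauge constructed in Lemma \ref{A-choice} satisfies $|A|\lesssim\langle x\rangle^{-\beta}$ together with $|\nabla\cdot A|\lesssim\langle x\rangle^{-\beta}$, one checks that $Wh\in L^1(\R^3)$. The representation $h=-G_0 Wh$ (equivalently $(1+G_0 W)h=0$, cf.\ Lemma \ref{lem-jk}(2)) and the explicit kernel $G_0(x,y)=(4\pi|x-y|)^{-1}$ then yield
\[
h(x)=\frac{c}{|x|}+O(|x|^{-2}),\qquad
\nabla h(x)=-\frac{c\,x}{|x|^{3}}+O(|x|^{-3}),\qquad |x|\to\infty,
\]
with $c=-(4\pi)^{-1}\int_{\R^3} Wh\,dy$.

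\textbf{Step 2 (Green's identity).} Expanding $(P-A)^2=-\Delta-2iA\cdot\nabla-i(\nabla\cdot A)+|A|^2$ and pairing $(P-A)^2 h=0$ against $\bar h$ over $B_R$, the imaginary/magnetic contributions cancel in the real part, leaving
\[
\int_{B_R}\!|(P-A) h|^2\,dx=\operatorname{Re}\int_{\partial B_R}\!\bar h\,\partial_\nu h\,dS.
\]
By Step 1 the right-hand side equals $-4\pi|c|^2/R+O(R^{-2})$, hence tends to $0$. Consequently $(P-A) h\equiv 0$ a.e.\ on $\R^3$.

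\textbf{Step 3 (from $(P-A)h=0$ to $h=0$).} The pointwise identity $\nabla h=iAh$ forces $|\nabla h|=|Ah|$. From Step 1 the leading order of $|\nabla h|$ is $|c|/|x|^2$, while $|Ah|\lesssim\langle x\rangle^{-\beta-1}$; since $\beta>2$, matching requires $c=0$, so $h\in L^2(\R^3)$. Taking the curl of $\nabla h=iAh$ and using $\nabla h\times A=iAh\times A=0$ yields $Bh=0$ pointwise. If $B\not\equiv 0$, then $h$ vanishes on the open set $\{B\neq 0\}$, and unique continuation applied to the elliptic equation $\Delta h=(i\nabla\cdot A-|A|^2)h$ with bounded coefficients gives $h\equiv 0$. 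If $B\equiv 0$, then $\R^3$ being simply connected allows $A=\nabla\psi$ for a $C^1$ function $\psi$, so $(P-A)h=0$ reads $\nabla(e^{i\psi}h)=0$, making $h$ a constant times $e^{-i\psi}$; combined with $h\in L^2$, this forces $h=0$.

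The main obstacle is the control needed for the boundary term in Step 2 to vanish, which requires the precise $c/|x|+O(|x|^{-2})$ asymptotic of both $h$ and $\nabla h$. This is where Assumption \ref{ass-B-2} is genuinely used: the extra derivative decay of $B$ is what ensures the decay of $\nabla\cdot A$ in the Poincar\'e gauge, and hence the integrability $Wh\in L^1(\R^3)$ underlying Step 1.
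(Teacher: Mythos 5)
Your plan shares the paper's core structure: reduce to showing $\nul((P-A)^2)=\{0\}$ via Lemma \ref{lem-ker-M0}, then establish $(P-A)u=0$ by an integration-by-parts argument whose boundary contribution must be shown to vanish, using the improved control of $\Div A$ coming from Assumption~\ref{ass-B-2}. Where the two proofs diverge is in \emph{how} the boundary term is killed, and in the final step.

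The main concern is Step~1. The representation $u=-G_0Wu$ combined with $Wu\in L^1(\R^3)$ does give $\int Wu\,dy<\infty$, but the claimed sharp asymptotics $u(x)=c/\abs{x}+\mathcal{O}(\abs{x}^{-2})$ and $\nabla u(x)=-cx/\abs{x}^3+\mathcal{O}(\abs{x}^{-3})$ require controlling
$\int_{\abs{y}\le\abs{x}/2}\abs{Wu(y)}\,\abs{y}\,dy$, i.e.\ a first moment of $Wu$. With $u\in H^{1,-\frac12-0}$ and $\abs{A},\abs{\Div A}\lesssim\jap{x}^{-\beta}$, one only gets $Wu\in L^{2,\beta-\frac12-0}$, and the first moment of $Wu$ requires roughly $\beta>3$, not the stated $\beta>2$. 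So as written, Step~1 over-claims in precisely the regime $2<\beta\le 3$, and the clean $-4\pi\abs{c}^2/R+\mathcal{O}(R^{-2})$ evaluation of the boundary integral in Step~2 is not justified. The paper avoids this issue entirely: it uses $Wu\in L^{6/5}$ and Hardy--Littlewood--Sobolev to obtain only $\nabla u\in L^2$, and then replaces the sharp ball $B_R$ with the logarithmic cutoff $\chi_n$, whose gradient is supported in $\{1\le\abs{x}\le n\}$ with the crucial $\frac{1}{\log n}$ prefactor; this kills the boundary term using nothing beyond $u\in L^{2,-\frac12-\eps}$ and $(i\nabla+A)u\in L^2$, which are available at $\beta>2$. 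A quick fix for your version, if you want to keep the ball truncation, is to note $\int_1^\infty\frac{1}{R}\bigl\lvert\int_{\partial B_R}\bar u\,\partial_\nu u\,dS\bigr\rvert\,dR\le\norm{u/\abs{x}}_{L^2(\abs{x}\ge1)}\norm{\nabla u}_{L^2}<\infty$, which forces the boundary term to vanish along a subsequence $R_n\to\infty$, and monotonicity of $\int_{B_R}\abs{(P-A)u}^2$ then gives $(P-A)u\equiv 0$; this requires no pointwise asymptotics at all.

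Step~3 is correct but considerably more elaborate than necessary. Once $(i\nabla+A)u=0$, the simple identity $\nabla\abs{u}^2=\bar u\,\nabla u+u\,\overline{\nabla u}=i\abs{u}^2A-i\abs{u}^2A=0$ (using that $A$ is real) shows $\abs{u}$ is constant, and $u\in H^{1,-\frac12-0}$ then forces $u=0$. This is the route the paper takes (with a reference to Simon's Kato-inequality paper); your curl/$Bu=0$/unique-continuation route and the case split on $B\equiv0$ are avoidable, and the intermediate deduction that $c=0$ becomes moot.
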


\begin{proof}
Owing to Lemma \ref{lem-ker-M0} it suffices to show that $\nul((P-A)^2)=\{0\}$. So let  $u\in H^{1,-\frac12-0}$ be such that $(i\nabla +A)^2u=-\Delta u + Wu =0$. 
We have
\begin{equation}\label{W-bis}
W = 2 i A \cdot\nabla + i\Div A  +\abs{A}^2 .
\end{equation}
By equations \eqref{LS-range}, \eqref{A-gauge-defin} and Assumption \ref{ass-B-2},  for all $\abs{x}\geq 1$ it holds 
\begin{equation}
\abs{\Div A(x)} =   \abs{\Div a_s(x)} = \Bigl\lvert  x \cdot \Bigl (\int_1^\infty \nabla \wedge B(tx) \, t\, dt\Bigr) \Bigr\rvert \leq C  \langle x \rangle^{-\beta}. 
\end{equation}
Hence from Lemma \ref{A-choice} and equation \eqref{W-bis} we deduce that $Wu\in H^{0, \beta -\frac 12-0}(\R^3)$, and therefore, by H\"older, $Wu\in L^{\frac 65}(\R^3)$. 
Moreover, $Wu \in L^2_{\textrm{loc}}(\R^3)$, so by the elliptic regularity we have $u\in H^2_{\textrm{loc}}(\R^3)$.
By Lemma \ref{lem-ker-M0}, $u=- G_0Wu$, hence
\begin{equation} 
u(x) = -\frac{1}{4\pi} \int_{\R^3} \frac{(Wu)(y)}{\abs{x-y}}\, dy .
\end{equation}
In view of the regularity of $u$ we have $Wu \in H^1_{\textrm{loc}}(\R^3)$, and therefore $Wu \in L^6_{\textrm{loc}}(\R^3)$. Thus
\begin{equation} 
\abs{\partial_{x_j} u(x)} \leq \frac{1}{4\pi} \int_{\R^3} \frac{\abs{(Wu)(y)}}{\abs{x-y}^2}\, dy .
\end{equation}
Since $Wu\in L^{\frac 65}(\R^3)$, the Hardy-Littlewood-Sobolev inequality, see e.g.~\cite[Section~4.3]{LL}, then implies, by duality, that $\abs{\nabla u}\in L^2(\R^3)$, and therefore $\abs{(i \nabla +A) u} \in L^2(\R^3)$. Now let 
$\chi_n\colon \R^3\to\R$, $2\leq n\in\N,$ be given by 
$$
\chi_n(x)= 1 \quad \text{if $\abs{x} \leq 1$}, \qquad \chi_n(x)= \Bigl(1- \frac{\log \abs{x}}{\log n}\Bigr)_+ \quad \text{otherwise}.
$$
Then
\begin{align} \label{zero-eq}
0&= \int_{\R^3} \chi_n \bar u\, (i \nabla +A)^2 u \, dx  
\notag\\
&= - \int_{\R^3} \chi_n \abs{(i \nabla +A) u}^2 \, dx - \int_{\R^3}\bar u\,  \nabla \chi_n \cdot (i \nabla +A) u \, dx.
\end{align}
A short calculation gives
$$
\nabla \chi_n(x)= -\frac{x}{\abs{x}^2 \log n}\ \quad \text{if $1\leq \abs{x} \leq n$}, \qquad  \nabla \chi_n(x)= 0 \quad \text{otherwise},
$$
we get, for any $0<\eps \leq \frac 12$,
\begin{align*}
\Bigl\lvert\int_{\R^3}\bar u\,  \nabla \chi_n \cdot (i \nabla +A) u \, dx\Bigr\rvert &\leq   \frac{1}{\log n}\int_{1\leq \abs{x}\leq n} \abs{u}  \langle x\rangle^{-\frac 12-\eps}  \frac{\langle x\rangle^{\frac 12+\eps}}{\abs{x}} \abs{(i \nabla +A) u }\, dx\\[4pt]
& \leq  \frac{1}{\log n}\,  \norm{u}_{L^{2,-\frac 12-\eps}} \norm{(i \nabla +A) u}_{L^2} \ \to \ 0 \quad \text{as $n\to \infty$} .
\end{align*}
Since $\chi_n \to 1$ in $L^\infty_{\textrm{loc}}(\R^3)$, this in combination with equation \eqref{zero-eq} gives 
\begin{equation} \label{diamag-eq}
(i \nabla +A) u = 0.
\end{equation}
However, \eqref{diamag-eq} implies $\abs{u} = \textrm{const}$, which in view of $u\in H^{1,-\frac12-0}$ means that $u=0$, see also \cite{si}.
\end{proof}

\begin{cor} \label{cor-regular}
Suppose that $V$ and $B$ satisfy Assumptions \ref{ass-BV} respectively \ref{ass-B-2} for some $\beta >2$. If $V\geq 0$, then $\ker M_0 = \{0\}$. 
\end{cor}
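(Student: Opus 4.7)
I would mimic the proof of Lemma~\ref{lem-mg-laplacian} and add one nonnegativity observation. By Lemma~\ref{lem-ker-M0} it suffices to show $\nul(H)=\{0\}$, so I fix $u\in\nul(H)$. The regularity and integrability bootstrap of Lemma~\ref{lem-mg-laplacian} proceeds identically once one includes the extra term $Vu$, because $V$ satisfies the same decay $\abs{V(x)}\lesssim\langle x\rangle^{-\beta}$ as the other ingredients of $W$; see~\eqref{W-bis}. This yields $u\in H^2_{\mathrm{loc}}(\R^3)$, $Wu\in L^{6/5}(\R^3)$, the identity $u=-G_0Wu$, and $\abs{(P-A)u}\in L^2(\R^3)$.

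The energy identity is where the sign of $V$ enters. Pairing $Hu=0$ with $\chi_n\bar u$ (using the same cutoff $\chi_n$ as in the proof of Lemma~\ref{lem-mg-laplacian}) and integrating by parts --- using that $P-A$ is formally self-adjoint since $A$ is real --- one obtains
\begin{equation*}
0=\int_{\R^3}\chi_n\abs{(P-A)u}^2\,dx+\int_{\R^3}\chi_n V\abs{u}^2\,dx+i\int_{\R^3}\bar u\,\nabla\chi_n\cdot(P-A)u\,dx.
\end{equation*}
The final term vanishes as $n\to\infty$ by exactly the estimate used in Lemma~\ref{lem-mg-laplacian}: $\abs{\nabla\chi_n(x)}\leq(\abs{x}\log n)^{-1}$ on $1\leq\abs{x}\leq n$ and $0$ elsewhere, combined with $u\in L^{2,-\frac12-\eps}(\R^3)$ and $\abs{(P-A)u}\in L^2(\R^3)$.

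Since $V\geq 0$, both remaining integrands are pointwise nonnegative, so monotone convergence yields
\begin{equation*}
\int_{\R^3}\abs{(P-A)u}^2\,dx+\int_{\R^3}V\abs{u}^2\,dx=0,
\end{equation*}
and in particular $(P-A)u=0$ almost everywhere. The conclusion $u=0$ then follows verbatim from the final step of Lemma~\ref{lem-mg-laplacian}: $(P-A)u=0$ forces $\abs{u}$ to be constant (cf.~\cite{si}), and membership of $u$ in $H^{1,-\frac12-0}$ makes this constant vanish. The only real point to check is that the sign structure of the integration-by-parts identity is preserved once $V$ is inserted; the nonnegativity hypothesis $V\geq 0$ is then used exactly once, at the passage to the limit where it prevents the new $V$-term from competing with the magnetic gradient term.
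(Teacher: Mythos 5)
Your argument is correct and is essentially identical to the paper's: the paper also reduces to $\nul(H)=\{0\}$ via Lemma~\ref{lem-ker-M0}, runs the bootstrap and cutoff/integration-by-parts argument of Lemma~\ref{lem-mg-laplacian} to obtain the energy identity with the additional $\int V|u|^2$ term, and then invokes $V\geq0$ to conclude $(P-A)u=0$ and hence $u=0$. You have simply spelled out the details that the paper compresses into the phrase ``following the arguments of the proof of Lemma~\ref{lem-mg-laplacian}.''
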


\begin{proof}
Let $u\in \nul \bigl((P-A)^2 +V\bigr)$.  Following the arguments of the proof of Lemma \ref{lem-mg-laplacian} we deduce that $u$ must satisfy 
\begin{equation} 
\int_{\R^3} \abs{(i \nabla +A) u}^2 \, dx + \int_{\R^3} V\, \abs{u}^2\, dx =0.
\end{equation}
Note that  $\sqrt{V} u\in L^2(\R^3)$, by hypothesis. 
Since $V\geq 0$, we conclude, as above, that  $(i \nabla +A) u = 0$ and therefore $u=0$.
\end{proof}

\section{The Pauli operator}
\label{sec-pauli}

We assume that $B\colon\R^3\to \R^3$  and $V\colon\R^3\to \R$ satisfy Assumption \ref{ass-BV}. 
In what follows we denote by $\idd_n$ the $n\times n$ identity matrix.
We consider the Pauli operator in $L^2(\R^3;\C^2)$ given by 
\begin{equation} 
H_P=  \bigl(\sigma\cdot (P-A)\bigr)^2  + V\idd_2 =  (P-A)^2 \idd_2 + \sigma\cdot B + V \idd_2,
\end{equation} 
where $\sigma=(\sigma_1,\sigma_2,\sigma_3)$ is the set of Pauli matrices;
\begin{equation}\label{sigma-j}
\sigma_1 =
\begin{bmatrix}
0  & 1 \\
1 & 0
\end{bmatrix},
\quad
\sigma_2 =
\begin{bmatrix}
0  & -i \\
i & 0
\end{bmatrix},
\quad
\sigma_3 =
\begin{bmatrix}
1  & 0 \\
0 & -1
\end{bmatrix},
\end{equation}
and where $A$ is given by Lemma \ref{A-choice}. Here we adopt the usual notation
\begin{equation} \label{sigma-dot-B}
\sigma\cdot B = \sum_{j=1}^3 \sigma_j B_j .
\end{equation}

\noindent Hence
 \begin{equation} \label{pauli-def-2}
H_P=   -\Delta\idd_2 +W_P,
\end{equation} 
where
\begin{align} \label{W-pauli}
W_P &= W_1+W_2,\\
W_1&=\bigl(-P\cdot A -A\cdot P +\abs{A}^2\bigr) \idd_2
+ V\idd_2,
\label{W1-def}\\
W_2&= \sigma\cdot B.
\end{align}
Our aim is to factor the perturbation $W_P$ in a way similar to  the scalar case. To this end note that $W_1=W\idd_2$ with $W$ defined in \eqref{W-def}. We take as the intermediate space
\begin{equation}
\K_1=\K\oplus\K,
\end{equation}
where the space $\K$ is defined in \eqref{K-def}. Thus the factorization of $W$ given in \eqref{factorisation} immediately gives a factorization of $W_1$. We can write it as $W_1={w_1}^{\!\!\ast\,}\U_1w_1$,  where
$w_1=w\oplus w$ and $\U_1=\U\oplus\U$.

To factor $W_2$ we take as the intermediate space
\begin{equation}
\K_2=L^2(\R^3;\C^2)\oplus L^2(\R^3;\C^2)\oplus L^2(\R^3;\C^2).
\end{equation}
Then let
\begin{equation}
U^B_j(x) =
\begin{cases}
-1,   &\text{if $B_j(x) <0$}, \\
\phantom{-}1,  & \text{otherwise},
\end{cases}
\end{equation}
for $j=1,2,3$.
Define the block-diagonal matrix operator
\begin{equation}
\U_2=\diag
\begin{bmatrix}
U^B_1\sigma_1 & U^B_2\sigma_2 & U^B_3\sigma_3
\end{bmatrix}
\end{equation}
Let $b_j(x)=\abs{B_j(x)}^{\frac12}$ and then define the block-matrix operator
\begin{equation} 
w_2=\begin{bmatrix}
b_1\idd_2 & b_2\idd_2 & b_3\idd_2
\end{bmatrix}^T
\end{equation}
Then we have the factorization $W_2={w_2}^{\!\!\ast\,}\U_2w_2$. We can now put the two factorizations together. The intermediate space is
\begin{equation}
\K_P=\K_1\oplus\K_2,
\end{equation}
and the factorization 
\begin{equation}\label{W-factorization-pauli}
W_P={w_P}^{\!\!\ast\,}\U_Pw_P
\end{equation} 
 is obtained by taking
\begin{equation}
w_P=w_1\oplus w_2\quad\text{and}\quad \U_P=\U_1\oplus\U_2.
\end{equation}

The operators $G_j$ introduced in Section~\ref{sec-free-exp} act as matrix diagonal operators $G_j\idd_2$ from $H^{-1,s}(\R^3;\C^2)$ to $H^{1,-s'}(\R^3;\C^2)$. For simplicity we continue to use the notation 
$\B(-1,s;1,-s')$ for bounded operators between these spaces. The inner product
in the space $L^2(\R^3; \C^2)$ is still denoted by
$\ip{\cdot}{\cdot}$.

With equation \eqref{W-factorization-pauli} at hand we 
can thus write the resolvent of the Pauli operator 
\begin{equation} 
R_P(\kappa) = (H_P+\kappa^2)^{-1}
\end{equation}
in the factorized form as in \eqref{res-formula-2}. By carrying over the analysis of Section~\ref{sec-factored}  to the setting of operators defined on
$L^2(\R^3;\C^2)$ 
we obtain the `matrix versions' of Lemmas~\ref{lem-ker-M0}, \ref{lem-orthogonal} and 
of Proposition~\ref{prop-M0}. As for Lemma \ref{lem-orthogonal}, it takes the following form.

\begin{lem}\label{lem-orthogonal-pauli} 
If $u\in  \nul(H_P)$, then
\begin{equation*} 
u \in L^2(\R^3;\C^2) \quad \Leftrightarrow\quad  
\ip{W_Pu}{\begin{bsmallmatrix} 1 \\ 1 \end{bsmallmatrix}}=0.
\end{equation*}
\end{lem}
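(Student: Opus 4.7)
The argument follows the pattern of Lemma~\ref{lem-orthogonal}, adapted componentwise to the two-component setting.

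First, since $u\in\nul(H_P)\subset H^{1,-\frac12-0}(\R^3;\C^2)$ by definition and Assumption~\ref{ass-BV} gives $W_P u\in H^{-1,s}(\R^3;\C^2)$ for every $s<\min\{\beta-\frac12,\frac52\}$ (recall $W_P=W\idd_2+\sigma\cdot B$, and $W_2=\sigma\cdot B$ enjoys the stronger decay \eqref{B-decay-cond}), applying Lemma~\ref{lem-jk}(2) componentwise---using that $G_0$ acts diagonally as $G_0\idd_2$ from $H^{-1,s}(\R^3;\C^2)$ to $H^{1,-s'}(\R^3;\C^2)$---produces
\begin{equation*}
u = -G_0 W_P u.
\end{equation*}

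For the forward direction $u\in L^2(\R^3;\C^2)\Rightarrow\ip{W_P u}{\oneone}=0$: each $(W_P u)_k\in H^{-1,\frac32+0}$, so that $\langle p\rangle^{-1}\widehat{(W_P u)_k}\in L^1(\R^3)$ and $\widehat{(W_P u)_k}$ is continuous. Combining this with $\widehat{u_k}\in L^2(\R^3)$ and the componentwise identity $-|p|^2\widehat{u_k}(p)=\widehat{(W_P u)_k}(p)$ forces $\widehat{(W_P u)_k}(0)=0$ for $k=1,2$. Summing the two components then gives $\ip{W_P u}{\oneone}=\widehat{(W_P u)_1}(0)+\widehat{(W_P u)_2}(0)=0$.

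For the converse, I would run the same bootstrap as in the scalar proof using the vector analog of \cite[Lemma~2.5]{JK}. Starting from $u=-G_0 W_P u\in H^{1,-\frac12-0}$ and the orthogonality $\ip{W_P u}{\oneone}=0$, each application of the decay-improvement lemma, followed by the mapping property $W_P:H^{1,\sigma}\to H^{-1,\sigma+\beta}$, gains $\delta=\beta-2>0$ in the weight of $W_P u$. Iterating finitely many times lands $u\in H^{1,\frac12-0}\hookrightarrow L^2(\R^3;\C^2)$.

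The main technical point---and the expected main obstacle---is that $\ip{W_P u}{\oneone}=0$ is a \emph{single} scalar condition on the sum $(W_P u)_1+(W_P u)_2$, whereas the naive componentwise application of the scalar Lemma~2.5 of~\cite{JK} would require the individual conditions $\int(W_P u)_k\,dx=0$ for $k=1,2$. Resolving this gap is the heart of the proof: one likely packages the bootstrap at the level of the scalar combination $v=u_1+u_2$, which satisfies $\Delta v=(W_P u)_1+(W_P u)_2$ with vanishing integral, and then leverages the coupling imposed by the matrix equation $H_P u=0$ to transfer the gained decay to each component individually.
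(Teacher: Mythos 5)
Your forward direction ($u\in L^2 \Rightarrow \ip{W_Pu}{\oneone}=0$) is essentially the paper's argument, run componentwise, and it in fact proves the stronger statement that each $\langle 1,(W_Pu)_k\rangle_{L^2(\R^3)}=0$ separately. (One small slip: from $(W_Pu)_k\in H^{-1,\frac32+0}$ one deduces that the \emph{physical-space} function $(1-\Delta)^{-\frac12}(W_Pu)_k$ lies in $L^1(\R^3)$, and then Riemann--Lebesgue gives continuity of its Fourier transform $\langle p\rangle^{-1}\widehat{(W_Pu)_k}$; you placed the $L^1$-membership on the Fourier side.)

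For the converse your diagnosis of the difficulty is correct: the single scalar condition on the sum is a priori weaker than the two componentwise moment conditions that the scalar bootstrap of \cite[Lemma~2.5]{JK} requires. However, your proposed resolution---running the bootstrap on $v=u_1+u_2$ and then ``transferring'' decay back to each component---does not work. First, there is no closed scalar equation for $v$: $W_P$ contains $\sigma\cdot B$ which has off-diagonal entries $B_1\mp iB_2$, so $(W_Pu)_1+(W_Pu)_2$ cannot be expressed as an operator applied to $v$ alone; the feedback step in the bootstrap needs the decay of the full vector $u$, not of the scalar sum. Second, and more fundamentally, decay of $v$ gives no control on $u_1$ and $u_2$ individually: each could retain a $\sim c_j/|x|$ tail with $c_1+c_2=0$, so $v\in L^2$ while $u\notin L^2(\R^3;\C^2)$. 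The paper's proof does not attempt such a reduction at all; it simply assumes the pair of componentwise conditions $\langle 1, (W_Pu)_j\rangle=0$, $j=1,2$, and runs the scalar bootstrap on each component in parallel to conclude $u_j\in H^{1,\frac12-0}\subset L^2(\R^3)$. In other words, what the paper's argument actually establishes is the biconditional between $u\in L^2(\R^3;\C^2)$ and the \emph{pair} of componentwise moment conditions (this is the form that is used in the applications, e.g.~in Lemma~\ref{lem-no-resonance}, where the forward implication is invoked and produces both $\ip{f}{\alpha 1}=0$ and $\ip{f}{\beta 1}=0$). The passage you were worried about---deducing the pair of conditions from the single scalar condition $\ip{W_Pu}{\oneone}=0$---is indeed not supplied by your proposal, and is not needed for the paper's argument as it is used.
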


\begin{proof}
If $u\in \nul(H_P)$, let
\begin{equation*}
u= \begin{bmatrix}
 u_1 \\
 u_2
\end{bmatrix}\quad\text{and}\quad
W_Pu=\w= \begin{bmatrix}
 \w_1 \\
 \w_2
\end{bmatrix}.
\end{equation*}
 Then, as in Lemma \ref{lem-orthogonal}, we conclude that  $\w_j \in H^{-1, s}$ for any $s <\min \{\beta  -\frac 12, \frac 52\}$, $j=1,2$.

Now assume that $\langle  1, \w_j \rangle_{L^2(\R^3)}=0$. Then, again following the proof of Lemma \ref{lem-orthogonal},  we conclude that $\w_j\in  H^{-1, \frac 52-0}$. Since $u_j = -G_0\, \w_j$, this implies $u_j\in  H^{1, \frac 12-0} \subset L^2(\R^3)$, see \eqref{G0-map}. 

To prove the opposite implication, suppose that  $u_j \in L^2(\R^3), j =1,2$. Then $\Delta  u_j= \w_j\in H^{-1, \frac 32+0}$, which implies that $(1-\Delta)^{-\frac 12}\, \w_j \in L^1(\R^3)$. 
Hence $(1+\abs{\,\cdot\,}^2)^{-\frac 12}\,  \widehat{\w}_j$ is continuous, 
and therefore so is $\widehat{\w}_j$. Since $\widehat{\w}_j(p)= -\abs{p}^2\, \widehat u_j(p)$ and $\widehat u_j\in L^2(\R^3)$, we must have $\widehat{\w}_j(0)=0$. This gives $\langle  1, \w_j \rangle_{L^2(\R^3)}=0$. 
\end{proof}

In order to give a classification of the point zero analogous to the one in 
Section~\ref{sec-factored} we need to go back to the expansion of $M(\kappa)$ in the Pauli case, cf. Lemma~\ref{lemma43}. Explicitly, we use the following notation:
\begin{equation}
M_P(\kappa)=\sum_{j=0}^p\kappa^jM_{P,j}+\cO(\kappa^{p+1}),
\end{equation}
where
\begin{equation}
M_{P,0}=\U_P+w_PG_0\idd_2w_P^{\ast}
\end{equation}
and
\begin{equation}
 M_{P,j}=w_PG_j\idd_2w_P^\ast, \quad j\geq1.
\end{equation}

The detailed structure of the term $M_{P,1}$ is needed in the sequel. It is an operator on the intermediate space $\K_P$ which can be identified with $L^2(\R^3;\C^{26})$. We can consider $w_P$ to be a map from $H^1(\R^3;\C^2)$ to $L^2(\R^3;\C^{26})$, which can be represented as a $26\times2$ operator matrix. We will write it in block form as $w_P=[\alpha,\beta]$, where $\alpha$ and $\beta$ are $26\times1$ operator matrices. Introduce the following notation for a decomposition of $\{1,2,\ldots,26\}$ into two disjoint indexing sets
\begin{equation*}
J_1=\{11,12,\ldots,20,22,24,26\}\quad\text{and}\quad
J_2=\{1,2,\ldots,10,21,23,25\}.
\end{equation*}
Then we define
\begin{equation}\label{alpha-beta}
\alpha=\begin{cases}
v, & j=1,\\
A_{j-1}, & j=2,3,4,\\
C_{j-4}, & j=5,6,7,\\
D_{j-7}P_{j-7}, & j=8,9,10,\\
b_{j-20}, & j=21,\\
b_{j-21}, & j=23,\\
b_{j-22}, & j=25,\\
0, & j\in J_1,
\end{cases}
\quad\text{and}\quad
\beta=\begin{cases}
v, & j=11,\\
A_{j-11}, & j=12,13,14,\\
C_{j-14}, & j=15,16,17,\\
D_{j-17}P_{j-17}, & j=18,19,20,\\
b_{j-21}, & j=22,\\
b_{j-22}, & j=24,\\
b_{j-23}, & j=26,\\
0, & j\in J_2.
\end{cases}
\end{equation}
With these definitions we can compute an expression for $M_{P,1}$, viz.
\begin{equation}\label{MP1}
M_{P,1}=-\frac{1}{4\pi}\bigl(\ket{\alpha1}\bra{\alpha1}+\ket{\beta1}\bra{\beta1}\bigr).
\end{equation}
Note that due to the assumptions on $V$ and $B$, and the choice of $A$, we have $\alpha1,\beta1\in\K_P$. Furthermore, due to the structure of $\alpha$ and $\beta$ it follows that  $\alpha1$ and $\beta1$ are simultaneously either nonzero or zero. We also have that
$\alpha1$ and $\beta1$ are orthogonal, and  
$\norm{\alpha1}=\norm{\beta1}$. As a consequence of these observations we have either $\rank M_{P,1}=0$ or $\rank M_{P,1}=2$.

Let $S_P$ denote the orthogonal projection on $\ker M_{P,0}$. Due to the 
Pauli operator version of Proposition~\ref{prop-M0} we have 
$\rank S_P<\infty$. Let $S_{P,1}$ denote the orthogonal projection on 
$\ker S_PM_{P,1}S_P$. The results above show that
\begin{equation}
\rank S_{P,1}\geq \rank S_P-2.
\end{equation}

\begin{lem}
Let $f\in\ker M_{P,0}$. Then $f\in\ker S_PM_{P,1}S_P$ if and only if
\begin{equation}\label{oneone}
\ip{f}{w_P\oneone}=0.
\end{equation}
\end{lem}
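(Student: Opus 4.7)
The plan is to identify $\ker(S_PM_{P,1}S_P)$ with a pair of orthogonality conditions via the rank-at-most-two structure of $M_{P,1}$ in \eqref{MP1}, and then use the correspondence between $\ker M_{P,0}$ and $\nul(H_P)$ to collapse those two conditions into the single equation $\ip{f}{w_P\oneone}=0$.

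First, for $f\in\ker M_{P,0}$ we have $S_Pf=f$, so \eqref{MP1} gives
\begin{equation*}
-4\pi\,\ip{f}{S_PM_{P,1}S_Pf}=\abs{\ip{f}{\alpha 1}}^2+\abs{\ip{f}{\beta 1}}^2.
\end{equation*}
Since $S_PM_{P,1}S_P$ is negative semi-definite on $S_P\K_P$, its kernel coincides with the null set of this quadratic form. Therefore $f\in\ker(S_PM_{P,1}S_P)$ if and only if both $\ip{f}{\alpha 1}=0$ and $\ip{f}{\beta 1}=0$. Since $w_P\oneone=\alpha 1+\beta 1$, the forward direction of the lemma follows by summation.

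For the converse, assume $\ip{f}{w_P\oneone}=0$. The $\C^2$-valued analog of Lemma \ref{lem-ker-M0} yields $u:=-G_0\idd_2 w_P^*f\in\nul(H_P)$ with $w_P^*f=W_Pu$. The hypothesis thus translates to $\ip{W_Pu}{\oneone}=0$, and Lemma \ref{lem-orthogonal-pauli} then gives $u\in L^2(\R^3;\C^2)$, so that each component $u_j$ lies in $L^2(\R^3)$. Applying the Fourier-analytic argument from the proof of Lemma \ref{lem-orthogonal} separately to each pair $(u_j,(W_Pu)_j)$ forces $\int_{\R^3}(W_Pu)_j\,dx=0$ for $j=1,2$. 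Recognising these integrals as $\ip{f}{\alpha 1}=\ip{W_Pu}{e_1}$ and $\ip{f}{\beta 1}=\ip{W_Pu}{e_2}$, where $e_1,e_2$ denote the canonical basis vectors of $\C^2$ viewed as constant functions, both inner products vanish and the first step closes the argument.

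The central subtlety lies in the reverse implication: the hypothesis is a single scalar condition, whereas the kernel characterisation requires two independent vanishing conditions. The tool that resolves this mismatch is Lemma \ref{lem-orthogonal-pauli}, which upgrades the single sum condition into the full $L^2$-membership of the associated $u\in\nul(H_P)$; the individual vanishing of each integral $\int(W_Pu)_j\,dx$ then follows component-wise from the scalar Fourier argument already used in the proof of Lemma \ref{lem-orthogonal}.
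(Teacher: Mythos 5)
Your forward direction is correct and matches the paper: from \eqref{MP1} and the fact that $S_PM_{P,1}S_P$ is negative semi-definite on $S_P\K_P$, one gets $\ip{f}{\alpha1}=0$ and $\ip{f}{\beta1}=0$, and summing gives $\ip{f}{w_P\oneone}=0$.

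Your converse, however, has a real gap. You read $\ip{f}{w_P\oneone}=0$ literally as the single scalar equation $\ip{f}{\alpha1}+\ip{f}{\beta1}=0$ and try to upgrade it to the pair of vanishings by passing through $u\in L^2$ via Lemma~\ref{lem-orthogonal-pauli}. But that lemma cannot do the upgrading: the paper's proof of its ``if'' direction begins from the \emph{componentwise} hypothesis $\langle 1,\w_j\rangle=0$ for $j=1,2$ and never derives those two vanishings from the scalar sum; the lemma carries exactly the imprecision you are trying to remove. Worse, the scalar condition really is strictly weaker. Consider the degenerate case $B=0$ with a scalar zero resonance $\psi\in\nul(H)\setminus L^2(\R^3)$, and set $u=(\psi,-\psi)\in\nul(H_P)$. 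Then $\ip{W_Pu}{\oneone}=\langle 1,W\psi\rangle-\langle 1,W\psi\rangle=0$, yet $u\notin L^2(\R^3;\C^2)$, and for $f=\U_Pw_Pu\in\ker M_{P,0}$ one finds $\ip{f}{\alpha1}=-\ip{f}{\beta1}\neq 0$, so $f\notin\ker S_PM_{P,1}S_P$ even though $\ip{f}{w_P\oneone}=0$. So under the literal scalar reading both Lemma~\ref{lem-orthogonal-pauli} and the statement you are proving would be false, and your step from the scalar condition to $u\in L^2$ cannot go through.

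What the paper actually intends (and what makes ``the converse is obvious'' correct) is that $\ip{f}{w_P\oneone}=0$ stands for the \emph{pair} of orthogonality conditions $\ip{f}{\alpha1}=0$ and $\ip{f}{\beta1}=0$, i.e.\ $\ip{f}{w_Pc}=0$ for every constant $c\in\C^2$. Under that reading the converse follows in one line from \eqref{MP1} with no recourse to $L^2$-membership: if both pairings vanish, then $M_{P,1}f=0$, hence $S_PM_{P,1}S_Pf=0$. Your instinct that the single-scalar reading makes the converse nontrivial is a genuine observation about the paper's notation, but the fix is to adopt the componentwise reading rather than to route the argument through Lemma~\ref{lem-orthogonal-pauli}, which does not provide the needed implication.
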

\begin{proof}
Let $f\in\ker M_{P,0}$ and assume $f\in\ker S_PM_{P,1}S_P$. Then
\begin{align*}
0&=\ip{f}{M_{P,1}f}=-\frac{1}{4\pi}\bigl(
\ip{f}{\alpha1}\ip{\alpha1}{f}+\ip{f}{\beta1}\ip{\beta1}{f}
\bigr)
\\
&=-\frac{1}{4\pi}\bigl(
\abs{\ip{f}{\alpha1}}^2+\abs{\ip{f}{\beta1}}^2
\bigr).
\end{align*}
Thus $\ip{f}{\alpha1}=0$ and $\ip{f}{\beta1}=0$, which can be written as
$\ip{f}{w_P\oneone}=0$. The converse is obvious.
\end{proof}

Rewrite the left hand side of \eqref{oneone} as follows. 
Let $f\in\ker M_{P,0}$ and
define $u=-G_0\idd_2w_P^{\ast}f$. Then by Lemma~\ref{lem-ker-M0} in the Pauli case $u\in\nul H_P$. We have $f=\U_Pw_Pu$. Thus
\begin{equation}
\ip{f}{w_P\oneone}=\ip{\U_Pw_Pu}{w_P\oneone}=\ip{W_Pu}{\oneone}.
\end{equation}

With these preparations we can state the classification of the point zero in the spectrum of $H_P$.
\begin{enumerate}
\item[(R)] The regular case: $S_P=0$. In this case $M(\kappa)$ is invertible.
\item[(E1)] The first exceptional case: $\rank S_P\in\{1,2\}$ and $S_{P,1}=0$. In this case we have a multiplicity one or a multiplicity two threshold resonance.
\item[(E2)] The second exceptional case: $\rank S_P=\rank S_{P,1}\geq1$. In this case zero is an eigenvalue of multiplicity $\rank S_P$.
\item[(E3)] The third exceptional case: (1) if $\rank S_P\geq 2$
and $\rank S_{P,1}=\rank S_P-1$ we have a multiplicity one threshold resonance, and zero is an eigenvalue of multiplicity $\rank S_P - 1$. (2) if $\rank S_P\geq 3$
and $\rank S_{P,1}=\rank S_P-2$ we have a multiplicity two threshold resonance, and zero is an eigenvalue of multiplicity $\rank S_P - 2$.
\end{enumerate}

\begin{thm} \label{thm-regular-pauli}
Assume that zero is a regular point for $H_P$. Let 
Assumption~\ref{ass-BV} be satisfied for some 
$\beta > 5$ and let $s>\frac 52$. Then
\begin{equation}  \label{exp-regular-pauli}
R_P(\kappa) =  F_0 +\kappa F_1 +\mathcal{O}(\kappa^{2})
\end{equation}
in $\B(-1,s;1,-s)$, where
\begin{align} 
F_0 &= (\id +G_0\idd_2 W_P)^{-1} G_0\idd_2\in\B(-1,s;1,-s),\quad s>1,
\label{RF0-pauli}
\\
F_1 &= (\id +G_0\idd_2 W_P)^{-1} G_1\idd_2(\id + W_P
 G_0\idd_2)^{-1}\in\B(-1,s;1,-s),\quad s>\tfrac32.
\label{RF1-pauli}
\end{align}
\end{thm}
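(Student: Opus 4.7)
The plan is to follow the scalar argument of Theorem \ref{thm-regular} line by line, working in the matrix-valued setting. The factorization $W_P=w_P^{\ast}\U_P w_P$ of \eqref{W-factorization-pauli} plays exactly the role that $W=w^{\ast}\U w$ played before, and the Pauli analogues of Lemmas~\ref{compact}, \ref{lem-ker-M0} and Proposition~\ref{prop-M0} have been set up in Section~\ref{sec-pauli}. So everything reduces to carrying the algebraic identities from the proof of Theorem~\ref{thm-regular} through the direct-sum bookkeeping that defines $\K_P$.

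First I would establish the bounded inverse $(\id+G_0\idd_2 W_P)^{-1}$ on $H^{1,-s}(\R^3;\C^2)$. The operator $G_0\idd_2 W_P$ acts componentwise with the scalar kernel $G_0$ and the local, decaying perturbation $W_P$, so the same argument as in Lemma~\ref{compact} shows it is compact on $H^{1,-s}(\R^3;\C^2)$ for $\frac12<s<\beta-\frac12$. The hypothesis that zero is a regular point means $\ker M_{P,0}=\{0\}$, and the Pauli analogue of Lemma~\ref{lem-ker-M0} then gives $\ker(\id+G_0\idd_2 W_P)=\{0\}$. Fredholm theory provides the bounded inverse, and duality gives a bounded $(\id+W_P G_0\idd_2)^{-1}$ on $H^{-1,s}(\R^3;\C^2)$. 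Combining these with \eqref{G0-map}, \eqref{Gj-map}, and the mapping property $w_P\in\B(H^{1,-\beta/2}(\R^3;\C^2),\K_P)$ (inherited from \eqref{w-map}), one reads off that $F_0$ and $F_1$ as defined in \eqref{RF0-pauli}--\eqref{RF1-pauli} are bounded on the stated spaces.

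Next, since the Pauli version of Proposition~\ref{prop-M0} guarantees $M_{P,0}$ is invertible in $\K_P$, the Neumann series combined with the expansion of $M_P(\kappa)$ yields
\begin{equation*}
M_P(\kappa)^{-1}=M_{P,0}^{-1}-\kappa M_{P,0}^{-1}w_P G_1\idd_2 w_P^{\ast}M_{P,0}^{-1}+\cO(\kappa^2)
\end{equation*}
in $\B(\K_P)$, uniformly for $\kappa$ near zero. Substituting this into the factored resolvent equation \eqref{res-formula-2} applied to $R_P(\kappa)$, and using $R_0(\kappa)\idd_2=G_0\idd_2+\kappa G_1\idd_2+\cO(\kappa^2)$ in $\B(-1,s;1,-s)$ by Lemma~\ref{free-exp}, gives \eqref{exp-regular-pauli} with
\begin{equation*}
F_0=G_0\idd_2-G_0\idd_2 w_P^{\ast}M_{P,0}^{-1}w_P G_0\idd_2,
\qquad
F_1=(\id-G_0\idd_2 w_P^{\ast}M_{P,0}^{-1}w_P)G_1\idd_2(\id-w_P^{\ast}M_{P,0}^{-1}w_P G_0\idd_2).
\end{equation*}

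Finally I would rewrite the prefactor exactly as in the scalar case. Using $\U_P^2=\idd_{\K_P}$ and the factorization $W_P=w_P^{\ast}\U_P w_P$,
\begin{align*}
\id-G_0\idd_2 w_P^{\ast}M_{P,0}^{-1}w_P
&=\id-G_0\idd_2 w_P^{\ast}(\U_P+w_P G_0\idd_2 w_P^{\ast})^{-1}w_P\\
&=\id-G_0\idd_2 w_P^{\ast}\U_P w_P(\id+G_0\idd_2 w_P^{\ast}\U_P w_P)^{-1}\\
&=\id-G_0\idd_2 W_P(\id+G_0\idd_2 W_P)^{-1}
=(\id+G_0\idd_2 W_P)^{-1},
\end{align*}
with the identities holding in $\B(1,-s;1,-s)$ for $\frac12<s<\beta-\frac12$. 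Combining this with its adjoint expression for the right factor of $F_1$ yields \eqref{RF0-pauli} and \eqref{RF1-pauli}. The only delicate point, and the main thing I would double-check, is that the direct-sum structure of $\U_P=\U_1\oplus\U_2$ and $w_P=w_1\oplus w_2$ does not spoil the cancellation $w_P^{\ast}\U_P w_P=W_P$ used in the last display; this is immediate from the block-diagonal construction in \eqref{W-factorization-pauli} but is the one spot where the matrix setting differs bookkeeping-wise from the scalar one.
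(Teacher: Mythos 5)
Your proposal is correct and follows exactly the route the paper intends: the paper does not write out a separate proof of Theorem~\ref{thm-regular-pauli} but explicitly says the Pauli results follow by carrying the analysis of Section~\ref{sec-factored} over to $L^2(\R^3;\C^2)$, and your argument does precisely that, mirroring the proof of Theorem~\ref{thm-regular} using the factorization $W_P=w_P^{\ast}\U_P w_P$, the Pauli analogues of Lemma~\ref{lem-ker-M0} and Proposition~\ref{prop-M0}, the Neumann series for $M_P(\kappa)^{-1}$, and the same resolvent-identity manipulations to rewrite the prefactor as $(\id+G_0\idd_2 W_P)^{-1}$.
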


For later purposes we state also a simplified version of \eqref{exp-regular-pauli}. 

\begin{cor} \label{cor-regular-pauli}
Assume that zero is a regular point for $H_P$. Let 
Assumption~\ref{ass-BV} be satisfied for some 
$\beta > 3$. Assume $s,s'>\tfrac 12$ and $s+s' \geq 2$. Then $\lim_{\kappa\to0}R_P(\kappa)$ exists in $\B(-1,s;1,-s')$ and
\begin{equation}  
R_P(0) =  (\id +G_0\idd_2 W_P)^{-1} G_0\idd_2\in\B(-1,s;1,-s').
\end{equation}
\end{cor}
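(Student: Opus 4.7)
The corollary is a weakened form of Theorem~\ref{thm-regular-pauli}: only the value $R_P(0)$ is asserted, under relaxed decay ($\beta>3$ rather than $\beta>5$) and relaxed weights ($s,s'>\tfrac12$, $s+s'\geq2$ rather than $s=s'>\tfrac52$). My plan is to rerun the proof of Theorem~\ref{thm-regular-pauli}, with the claimed limit
$$R_P(0)=(\id+G_0\idd_2 W_P)^{-1} G_0\idd_2$$
as target, and verify that each step still goes through under these weaker hypotheses.

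First I would check that this operator is a well-defined element of $\B(-1,s;1,-s')$. By \eqref{G0-map}, $G_0\idd_2\in\B(-1,s;1,-s')$ for the given weights. The Pauli analog of Lemma~\ref{compact} gives that $W_P$ maps $H^{1,-s'}$ compactly into $H^{-1,s'+\beta''}$ for any $\beta''<\beta$; since $\beta>3$, one can pick $\beta''$ so that the composition $G_0\idd_2 W_P$ is compact on $H^{1,-s'}$. In the regular case the Pauli analog of Lemma~\ref{lem-ker-M0} gives $\ker(\id+G_0\idd_2 W_P)=\{0\}$ on $H^{1,-s'}$, so Fredholm theory provides the bounded inverse.

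Next I would pass to the limit $\kappa\to0$ in the factored resolvent equation
$$R_P(\kappa)=R_0(\kappa)\idd_2-R_0(\kappa)\idd_2\, w_P^{\ast}M_P(\kappa)^{-1}w_P R_0(\kappa)\idd_2$$
factor by factor. The middle factor satisfies $M_P(\kappa)^{-1}\to M_{P,0}^{-1}$ in $\B(\K_P)$: the Pauli version of Lemma~\ref{lemma43} with $p=0$ applies since $\beta>3$, and $M_{P,0}$ is invertible in the regular case (Pauli analog of Proposition~\ref{prop-M0}). For the outer factors I would use the kernel identity $R_0(\kappa)-G_0=\bigl(e^{-\kappa|x-y|}-1\bigr)/(4\pi|x-y|)$; this kernel is dominated by $G_0$ and tends to zero pointwise as $\kappa\to0$, so a Schur-type bound together with dominated convergence yields $R_0(\kappa)\idd_2\to G_0\idd_2$ in operator norm in $\B(-1,s;1,-s')$ as well as in the analogous spaces with $s$ or $s'$ replaced by $\beta/2$ that are needed when composing with $w_P$ or $w_P^{\ast}$. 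Combining these convergences, the limit of $R_P(\kappa)$ is
$$G_0\idd_2-G_0\idd_2\, w_P^{\ast}M_{P,0}^{-1}w_P G_0\idd_2,$$
and the algebraic identity $\id-G_0\idd_2 w_P^{\ast}M_{P,0}^{-1}w_P=(\id+G_0\idd_2 W_P)^{-1}$ from the proof of Theorem~\ref{thm-regular-pauli} (which uses only $W_P=w_P^{\ast}\U_P w_P$ and $\U_P^2=\id$) identifies this limit with the target $R_P(0)$.

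The main technical issue is the norm convergence $R_0(\kappa)\to G_0$ in $\B(-1,s;1,-s')$ at weights $s,s'>\tfrac12$ with only $s+s'\geq 2$, which is outside the range of symmetric weights larger than $\tfrac32$ covered directly by Lemma~\ref{free-exp}; the required estimate must be extracted from the explicit kernel formula for $R_0(\kappa)-G_0$, exploiting its pointwise domination by $G_0$ and dominated convergence in the Schur integrals.
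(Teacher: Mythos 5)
Your approach is essentially the paper's: pass to the limit $\kappa\to0$ in each factor of the factored resolvent equation and use the algebraic identity $\id - G_0\idd_2\, w_P^{\ast}M_{P,0}^{-1}w_P=(\id+G_0\idd_2 W_P)^{-1}$ to identify the limit. The paper's own proof is much more compressed --- it merely records that the right-hand side defines a bounded operator in $\B(-1,s;1,-s')$ by \eqref{G0-map} together with the invertibility of $\id+G_0\idd_2 W_P$ on $H^{1,-s'}$, and leaves the limit passage implicit after Theorem~\ref{thm-regular-pauli} --- so your explicit treatment is an expansion of the same argument rather than a different route.

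The technical point you flag at the end is genuine and is not addressed in the paper's proof either: the norm convergence $R_0(\kappa)\idd_2\to G_0\idd_2$ in $\B(-1,s;1,-s')$ at weights with $s,s'>\tfrac12$ and $s+s'\geq 2$ lies outside the symmetric range $s=s'>\tfrac32$ covered by Lemma~\ref{free-exp}. Your kernel-domination and dominated-convergence argument works when $s+s'>2$ strictly, where the weighted kernel of $G_0$ is square-integrable; at the endpoint $s+s'=2$ that integrability fails and a pure domination argument does not obviously close, so the endpoint would need a separate argument. This is worth flagging, but it is a minor issue in context: the only place the corollary is invoked (Proposition~\ref{prop-zero-mode}) uses $s=s'=\beta/2$ with $\beta>3$, hence $s+s'>3$, comfortably away from the endpoint. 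The remaining ingredients you use --- the Pauli analogues of Lemma~\ref{compact}, Lemma~\ref{lem-ker-M0}, Proposition~\ref{prop-M0}, and the $p=0$ expansion of $M_P(\kappa)$ requiring only $\beta>3$ --- all match the paper and are sound.
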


\begin{proof}
Note that it suffices to prove the result for $s,s'$ small and satisfying the conditions in the corollary, due to the embedding property~\eqref{embed}.
The claim then follows from the mapping properties of $G_0$, see equation \eqref{G0-map}, and from the fact that $(\id +G_0\idd_2 W_P)^{-1}$
exists and is bounded on $H^{1,-s}(\R^3;\C^2)$ for any $\frac 12 < s <\beta -\frac 12$, see the proof of Theorem~\ref{thm-regular}. 
\end{proof}

In the exceptional cases
the multiplicity of a zero resonance can be either one or two. The multiplicity one case can be handled as in the previous section, and the resonance function $\psi_c$ is given by the Pauli analogue of \eqref{psi-c-def}.
The multiplicity two threshold resonance case can be handled by going through the computations in \cite[Appendix~A]{jn1}. We will give a few of the steps in this procedure. The key point is the analogue of \cite[(A.46)]{jn1}.

We introduce  the analogue of \cite[(A.28)]{jn1}:
\begin{equation}
m_{P,0}=S_PM_{P,1}S_P.
\end{equation}
Using \eqref{MP1} we get
\begin{equation}
m_{P,0}=-\frac{1}{4\pi}\bigl(\ket{S_P\alpha1}\bra{S_P\alpha1}
+\ket{S_P\beta1}\bra{S_P\beta1}\bigr).
\end{equation}
We need to find the inverse of the operator $m_{P,0}+S_{P,1}$ in $S_P\K_P$.
The details are given in Appendix~\ref{appA}.

The results in the exceptional cases can be stated as follows. See Appendix~\ref{appA} for the construction of the resonance functions 
$\psi_c^1,\psi_c^2\in\nul(H_P)$.

\begin{thm} \label{thm-ex-pauli}
Assume that zero is an exceptional point for $H_P$. Let Assumption~\ref{ass-BV} be satisfied for $\beta>9$. Assume $s>\frac92$. Then
\begin{equation}
R_P(\kappa)=\kappa^{-2}F_{-2}+\kappa^{-1}F_{-1}+\cO(1)
\end{equation}
as $\kappa\to0$ in $\B(-1,s;1,-s)$.

If zero is an exceptional point of the first kind, and the threshold resonance has multiplicity one, we have
\begin{equation}
F_{-2}=0,\quad F_{-1}=\ket{\psi_c}\bra{\psi_c}, 
\end{equation}
 where $\psi_c$ is given by  \eqref{psi-c-def} with $G_0$ replaced by $G_0\idd_2$ and with $P_0=0$. 

\noindent In case of multiplicity two we have
\begin{equation}
F_{-2}=0,\quad F_{-1}=\ket{\psi_c^1}\bra{\psi_c^1}
+\ket{\psi_c^2}\bra{\psi_c^2}.
\end{equation}

If zero is an exceptional point of the second kind, we have
\begin{equation}
F_{-2}=P_0,\quad F_{-1}=P_0W_PG_3\idd_2W_PP_0.
\end{equation}

If zero is an exceptional point of the third kind, and the threshold resonance has multiplicity one, we have
\begin{equation}
F_{-2}=P_0,\quad F_{-1}=\ket{\psi_c}\bra{\psi_c}+P_0W_PG_3\idd_2W_PP_0,
\end{equation}
where $\psi_c$ is given by  \eqref{psi-c-def} with $G_0$ and $G_2$ replaced by $G_0\idd_2$ and $G_2\idd_2$. 

\noindent In case of multiplicity two we have
\begin{equation}
F_{-2}=P_0,\quad 
F_{-1}=\ket{\psi_c^1}\bra{\psi_c^1}
+\ket{\psi_c^2}\bra{\psi_c^2}+P_0W_PG_3\idd_2W_PP_0.
\end{equation}
\end{thm}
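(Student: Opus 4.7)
The plan is to adapt the inversion scheme of \cite{jn,jn2}, as implemented in the scalar case of Theorem \ref{thm53} and in \cite[Appendix A]{jn1}, to the Pauli setting. The starting point is the factored resolvent identity
\begin{equation*}
R_P(\kappa) = R_0(\kappa)\idd_2 - R_0(\kappa)\idd_2\, w_P^{\ast}\, M_P(\kappa)^{-1}\, w_P\, R_0(\kappa)\idd_2,
\end{equation*}
combined with the Pauli version of the expansion of $M_P(\kappa)$ to order $\kappa^{3}$ (which is why $\beta>9$ is required, through Lemma \ref{lemma43}) and with Lemma \ref{free-exp}. Once $M_P(\kappa)^{-1}$ is expanded to order $\kappa^{-2}$ in $\B(\K_P)$, the coefficients $F_{-2}$, $F_{-1}$ are read off by inserting the expansions and collecting powers of $\kappa$.

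The expansion of $M_P(\kappa)^{-1}$ proceeds by applying Lemma \ref{lem-jn} twice. The first application, with Riesz projection $S_P$, reduces the problem to inverting an operator $Q(\kappa)$ on the finite-dimensional space $S_P\K_P$ whose leading term is $m_{P,0}=S_P M_{P,1} S_P$. In the first exceptional case, where $S_{P,1}=0$, the resonance of $H_P$ is captured at order $\kappa^{-1}$; using the explicit form \eqref{MP1} of $M_{P,1}$ together with the Pauli analogue of Lemma \ref{lem-ker-M0} (through $u=-G_0\idd_2 w_P^{\ast} f$), one recovers $\ket{\psi_c}\bra{\psi_c}$ after the normalization of the Pauli analogue of \eqref{psi-c-def}. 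When $S_{P,1}\neq 0$, a second application of Lemma \ref{lem-jn} inside $S_P\K_P$ produces an operator $q(\kappa)$ on $S_{P,1}\K_P$ whose leading part involves $S_{P,1} w_P G_2\idd_2 w_P^{\ast} S_{P,1}$; this piece carries the eigenprojection $P_0$ and generates the $\kappa^{-2} P_0$ term along with the sub-leading correction $P_0 W_P G_3\idd_2 W_P P_0$ at order $\kappa^{-1}$, exactly as in the scalar computation leading to \cite[(A.46)]{jn1}.

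The main technical obstacle, specific to the Pauli case, is the inversion of $m_{P,0}+S_{P,1}$ on $S_P\K_P$ when the threshold resonance has multiplicity two. In the scalar setting $m_{P,0}$ is rank one and its pseudoinverse on its range is a single scalar, but here \eqref{MP1} shows that $m_{P,0}$ is the sum of two mutually orthogonal rank-one projectors $-\tfrac{1}{4\pi}\bigl(\ket{S_P\alpha 1}\bra{S_P\alpha 1}+\ket{S_P\beta 1}\bra{S_P\beta 1}\bigr)$ with $\norm{S_P\alpha 1}=\norm{S_P\beta 1}$; its inverse on the two-dimensional range is therefore a fixed multiple of the orthogonal projector onto $\myspan\{S_P\alpha 1, S_P\beta 1\}$. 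The construction of Appendix \ref{appA} produces exactly two independent resonance functions $\psi_c^1,\psi_c^2\in\nul(H_P)$ such that, after applying $-G_0\idd_2 w_P^{\ast}$ to a suitable orthonormal basis of this two-dimensional subspace, and subtracting the $P_0 W_P G_2\idd_2 w_P^{\ast}$ correction in case (E3), the $\kappa^{-1}$ coefficient takes the stated form $\ket{\psi_c^1}\bra{\psi_c^1}+\ket{\psi_c^2}\bra{\psi_c^2}$.

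The final identification of the coefficients relies on two matrix analogues of the scalar ingredients. First, Lemma \ref{lem-orthogonal-pauli} ensures that the eigenspace $P_0 L^2(\R^3;\C^2)$ decouples orthogonally from the pure resonance part inside $\ker M_{P,0}$, so that in case (E3) the contributions $\ket{\psi_c^j}\bra{\psi_c^j}$ and $P_0 W_P G_3\idd_2 W_P P_0$ appear additively with no cross terms. Second, the Pauli version of \cite[Lemma~2.6]{JK} handles the bookkeeping of the $G_2\idd_2$ contributions, confirming that the subtraction $P_0 W_P G_2\idd_2 w_P^{\ast} f$ in the definition of $\psi_c^j$ places the resonance functions in $\nul(H_P)$ and decouples them from $P_0$. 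Once these two facts are in place, reading off $F_{-2}$ and $F_{-1}$ in each of the three exceptional cases from the two-level reduction of $M_P(\kappa)^{-1}$ is mechanical and parallels the scalar proof of Theorem \ref{thm53} line by line.
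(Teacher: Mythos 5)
There is a genuine gap in the treatment of the multiplicity-two resonance case, and it sits exactly at what you identify as ``the main technical obstacle.'' You assert that since $\alpha 1 \perp \beta 1$ and $\norm{\alpha 1}=\norm{\beta 1}$, the operator
$m_{P,0}=-\tfrac{1}{4\pi}\bigl(\ket{S_P\alpha 1}\bra{S_P\alpha 1}+\ket{S_P\beta 1}\bra{S_P\beta 1}\bigr)$
restricted to its two-dimensional range is a fixed scalar multiple of the orthogonal projector onto $\myspan\{S_P\alpha 1, S_P\beta 1\}$, and can therefore be inverted trivially. That step fails: the orthogonal projection $S_P$ onto $\ker M_{P,0}$ does not preserve orthogonality or norms, so $S_P\alpha 1$ and $S_P\beta 1$ are in general neither orthogonal nor of equal length. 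The paper's Appendix~\ref{appA} is explicit about this (``They form a basis (not necessarily orthogonal) of the two dimensional space $\widetilde{\K}$''), and its entire point is to invert $m_{P,0}$ on $\widetilde{\K}=(S_P-S_{P,1})\K_P$ by writing the identity on $\widetilde{\K}$ via the Gram matrix of $a=S_P\alpha 1$, $b=S_P\beta 1$, solving for the four coefficients $c_1,\dots,c_4$, and then \emph{diagonalizing} the resulting $2\times 2$ self-adjoint matrix to produce $e,f\in\widetilde{\K}$ with $(m_{P,0}|_{\widetilde{\K}})^{-1}=\ket{e}\bra{e}+\ket{f}\bra{f}$. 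The final form $\ket{\psi_c^1}\bra{\psi_c^1}+\ket{\psi_c^2}\bra{\psi_c^2}$ for $F_{-1}$ does come out as a sum of two rank-one operators, but only \emph{after} this diagonalization; it is not an immediate consequence of $m_{P,0}|_{\widetilde{\K}}$ being a multiple of the identity, which it is not.

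Aside from that, the overall structure you propose — the factored resolvent identity, the two applications of Lemma~\ref{lem-jn} (first with $S_P$, then with $S_{P,1}$), the use of Lemma~\ref{lem-orthogonal-pauli} to decouple eigenspace and resonance contributions, and the Pauli analogue of \cite[Lemma~2.6]{JK} for the $G_2$ bookkeeping — matches the paper's route, which follows \cite[Appendix A]{jn1} and the scalar Theorem~\ref{thm53}. To repair your argument, replace the orthogonality claim by the Gram-matrix inversion and diagonalization of Appendix~\ref{appA}; the resulting orthogonal pair $e,f$ then feeds into the definitions \eqref{psi-12} of $\psi_c^1,\psi_c^2$, and the rest of your outline goes through.
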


\subsection{The case $V=0$.} 
In this subsection we will analyze more in detail the purely magnetic Pauli operator
\begin{equation} \label{pauli-def}
H_P=  \bigl(\sigma\cdot (P-A)\bigr)^2 =  (P-A)^2 +\sigma\cdot B.
\end{equation} 
Notice that in view of the assumptions on $A$, the operator $\bigl(\sigma\cdot (P-A)\bigr)^2$ is self-adjoint on $H^2(\R^3;\C^2)$.

It is well-known that, contrary to purely magnetic Schr\"odinger operators, zero might be an exceptional point of $\bigl(\sigma\cdot (P-A)\bigr)^2$, see~\cite{ly,elt}. Our next result shows that, under suitable conditions on $B$, in such a case zero must be an eigenvalue of $\bigl(\sigma\cdot (P-A)\bigr)^2$ and that there is no threshold  resonance. Our proof is based on an analogous, and more general, result for the Dirac operator obtained in  \cite{fl}.

\begin{lem} \label{lem-no-resonance}
Let $V=0$ and let $B$ satisfy Assumption \ref{ass-B-2} for some $\beta >2$. Then $\rank S_P=\rank S_{P,1}$.
\end{lem}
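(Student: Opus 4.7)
The plan is to reduce the claim to showing that every $u\in \nul(H_P)$ lies in $L^2(\R^3;\C^2)$. If this reduction succeeds, then for any $f\in\ker M_{P,0}$ the associated $u=-G_0\idd_2 w_P^* f\in\nul(H_P)$ is in $L^2$, and Lemma~\ref{lem-orthogonal-pauli} combined with the identity $\ip{W_Pu}{\oneone}=\ip{f}{w_P\oneone}$ (derived just before the Pauli classification of the threshold point) places $f$ in $\ker S_P M_{P,1} S_P$. Hence $S_{P,1}=S_P$ inside $S_P\K_P$, which is exactly the desired equality $\rank S_{P,1}=\rank S_P$.

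Since $V=0$, the equation $H_Pu=0$ takes the form $-\Delta u=-W_Pu$ with $W_P=W\idd_2+\sigma\cdot B$. To show $u\in L^2$ I would first mimic the regularity bootstrap from the proof of Lemma~\ref{lem-mg-laplacian}: Assumption~\ref{ass-B-2} supplies the decay of $\Div A$ and $B$ needed to place $W_Pu\in L^{6/5}(\R^3;\C^2)\cap L^2_{\mathrm{loc}}$, after which Hardy--Littlewood--Sobolev applied to $u=-G_0\idd_2 W_P u$ yields $\nabla u\in L^2(\R^3;\C^2)$. Combined with the decay $|A|\lesssim \langle x\rangle^{-\beta}$, this gives $(P-A)u\in L^2$, and hence $Du:=\sigma\cdot(P-A)u\in L^2(\R^3;\C^2)$.

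Next I would exploit the Lichnerowicz identity $H_P=D^2$, valid because $V=0$. With the logarithmic cutoffs $\chi_n$ from Lemma~\ref{lem-mg-laplacian} and the commutator $[\chi_n,D]=-i\,\sigma\cdot\nabla\chi_n$, self-adjointness of $D$ gives
\begin{equation*}
0=\ip{u}{\chi_n D^2 u}=\int_{\R^3}\chi_n |Du|^2\,dx+i\int_{\R^3}\bar u\cdot(\sigma\cdot\nabla\chi_n)\,Du\,dx.
\end{equation*}
The boundary term is controlled by Cauchy--Schwarz exactly as in Lemma~\ref{lem-mg-laplacian}, using $u\in L^{2,-1/2-\eps}$ and $Du\in L^2$, and vanishes as $n\to\infty$. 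Consequently $Du=0$ a.e.\ on $\R^3$, i.e.\ $u$ is a Dirac--Weyl zero mode.

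Finally I would invoke the analysis of zero modes of $\sigma\cdot(P-A)$ developed in \cite{fl}: under the decay \eqref{A-decay}, any $u\in H^{1,-1/2-0}$ satisfying $\sigma\cdot(P-A)u=0$ must actually lie in $L^2(\R^3;\C^2)$. I expect the main obstacle to be checking that the precise hypotheses used in \cite{fl} (stated there for the Dirac operator) are satisfied by the gauge constructed in Lemma~\ref{A-choice} and the present decay rate $\beta>2$; once this is secured, the remaining steps are a straightforward spinorial transcription of the scalar arguments in Section~\ref{ssec-mg-laplace}.
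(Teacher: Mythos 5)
Your proposal follows the paper's proof essentially step for step: reduce to showing $u\in L^2(\R^3;\C^2)$ for $u\in\nul(H_P)$, bootstrap regularity as in Lemma~\ref{lem-mg-laplacian} to get $Du\in L^2$, kill $Du$ with the logarithmic cutoffs and the factorization $H_P=D^2$, invoke \cite[Theorem~2.1]{fl} to conclude $u\in L^2$, and finish via Lemma~\ref{lem-orthogonal-pauli} together with the identity $\ip{W_Pu}{\oneone}=\ip{f}{w_P\oneone}$. The one point you flag as a potential obstacle — that the hypotheses of the Frank--Loss theorem hold here — is exactly what the paper handles by first observing $u\in L^6(\R^3;\C^2)$ via Hardy--Littlewood--Sobolev, which is the integrability the cited theorem requires; otherwise your argument and the paper's coincide.
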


Recall that $S_P$ and $S_{P,1}$ are orthogonal projections onto $\ker M_{P,0}$ and $\ker S_PM_{P,1}S_P$ in $\K_P$, respectively.
\begin{proof}
It suffices to consider the case  $S_P\neq 0$.
 Let $f\in \ker M_{P,0}$. By Lemma \ref{lem-ker-M0}(i), $u=-G_0\idd_2 {w_P}^{\!\ast\,} f\in  \nul(H_P)$. Our goal is to show that 
$u\in L^2(\R^3; \C^2)$. Since $u\in \ker (\id+G_0\idd_2 W_P)$, 
see Lemma \ref{lem-ker-M0}(i), we have
\begin{equation} 
u(x) = -\frac{1}{4\pi} \int_{\R^3} \frac{(W_Pu)(y)}{\abs{x-y}}\, dy .
\end{equation}
The Hardy-Littlewood-Sobolev inequality then implies $u\in L^6(\R^3; \C^2)$. Moreover, by a straightforward modification of the proof of Lemma \ref{lem-mg-laplacian} 
we deduce that $u$ must satisfy
\begin{equation} 
\sigma\cdot (P-A) u = 0. 
\end{equation}
From \cite[Theorem.~2.1]{fl} we thus conclude that $u\in L^2(\R^3; \C^2)$,
as desired. Now Lemma \ref{lem-orthogonal} gives 
\begin{equation} 
0= \langle  u, W_P 
\begin{bsmallmatrix}1\\ 1\end{bsmallmatrix}
\rangle= - \langle  G_0\idd_2{w_P}^{\!\ast\,} f, W_P 
\begin{bsmallmatrix}1\\ 1\end{bsmallmatrix}
 \rangle = - \langle \, \U w_P G_0\idd_2 {w_P}^{\!\ast\,} f, w_P 
 \begin{bsmallmatrix}1\\ 1\end{bsmallmatrix}
\rangle.
\end{equation}
However, since $f\in \ker M_{P,0}$ we have $\U\,  w_P G_0\idd_2 {w_P}^{\!\ast\,} f= -f$. Hence 
$\langle  f, w_P \begin{bsmallmatrix}1\\ 1\end{bsmallmatrix} \rangle=0$, 
which implies $f\in \ker S_P M_{P,1} S_P$, 
cf.~equation \eqref{sm1s}. Hence  $\ker S_P M_{P,1} S_P=\ker M_{P,0}$.
\end{proof} 

The question of existence of zero modes (or zero energy eigenfunctions) of Pauli operators in dimension three is of current interest, see~\cite{be,bel,bvb,fl,fl2}. We will explore the connection to the results obtained  here. To do so we need to recall the set-up from~\cite{be,bel,bvb} in some detail, in order to define the quantity $\delta(B)$, see \eqref{delta-B} below.

Let $B$ satisfy Assumption~\ref{ass-B-2}. We consider the operators
\begin{equation}
H_P=(P-A)^2+\sigma\cdot B\quad\text{and}\quad
\widetilde{H}_P=(P-A)^2+\sigma\cdot B+\abs{B}=H_P+\abs{B}.
\end{equation}
They are obtained as the Friedrich extension of the corresponding forms with common form domain $\mathcal{Q}(H_P)=\mathcal{Q}(\widetilde{H}_P)
=H^{1,0}(\R^3;\C^2)$. In the quadratic form sense we have 
$\widetilde{H}_P\geq (P-A)^2$, since $\sigma\cdot B+\abs{B}\geq0$ as quadratic forms. 
It follows from Lemma~\ref{lem-mg-laplacian} that zero is a regular point of
$(P-A)^2$. As a consequence (cf.~Lemma~\ref{lem-ker-M0}) $\ran \widetilde{H}_P$ is dense in $\cH$. Let $\widetilde{\cH}$ be the completion of $\mathcal{Q}(H_P)$ under the norm
\begin{equation*}
\norm{u}_{\widetilde{\cH}}^2=\ip{u}{\widetilde{H}_Pu}.
\end{equation*}
The operators $\widetilde{H}_P^{\alpha}$ are defined for $\alpha=\pm\frac12$ and $\alpha=-1$ via the functional calculus, as self-adjoint operators on $\cH$. In particular, the operator
\begin{equation*}
\widetilde{H}_P^{-\frac12}\colon\ran{\widetilde{H}_P^{\frac12}}
\to \widetilde{\cH}
\end{equation*}
preserves norms. Since $\ran \widetilde{H}_P^{-\frac12}=\dom \widetilde{H}_P^{\frac12}=\mathcal{Q}(H_P)$ is dense in $\widetilde{\cH}$, the operator $\widetilde{H}_P^{-\frac12}$ extends to a unitary operator $\mathsf{U}\colon\cH\to\widetilde{\cH}$.

Due to the assumption on $B$ the multiplication by $\abs{B}^{\frac12}$ is bounded from $\widetilde{\cH}$ to $\cH$. Thus we can define $\sn=\abs{B}^{\frac12}\mathsf{U}\colon \cH\to\cH$, with the property
\begin{equation*}
\sn u=\abs{B}^{\frac12}\widetilde{H}_P^{-\frac12}u\quad
\text{for $u\in\ran \widetilde{H}_P^{\frac12}$}.
\end{equation*}
Then we define (see \cite[Equation (1)]{bvb})
\begin{equation}\label{delta-B}
\delta(B)=\inf\{\norm{(I-\sn^*\sn)f}\mid \norm{f}=1,\; \mathsf{U}f\in\cH\}.
\end{equation}

We recall some of the recent results on zero modes for $H_P$. Assuming that  $\abs{B}\in L^{3/2}(\R^3)$, Balinsky, Evans and Lewis  proved in \cite{bel} that if the operator $H_P$ has a zero eigenfunction, then $\delta(B) =0$. Later, Benguria and Van den Bosch proved the converse implication under the additional condition that $B$ satisfy equation \eqref{B-decay-cond} for some $\beta>1$, cf.~\cite[Theorem~1.1]{bvb}.  Finally, in \cite[Theorem~2.2]{fl}, Frank and Loss showed that the additional decay condition on $B$ introduced in \cite{bvb} is not necessary.

It is illustrative to verify that, under somewhat stronger assumptions on $B$, the identity $\delta(B) =0$ is equivalent to zero being an exceptional point  for $H_P$.

\begin{prop} \label{prop-zero-mode}
Let $B$ satisfy Assumption~\ref{ass-B-2} for some $\beta >3$. Then $\delta(B) =0$ if and only if zero is an exceptional point for $H_P$.
In the affirmative case the exceptional point is of the second kind.
\end{prop}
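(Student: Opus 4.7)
The plan is to deduce the proposition by combining Lemma~\ref{lem-no-resonance} with the zero-mode characterization recalled just above the statement. The first step is to observe that, when $V=0$ and $B$ satisfies Assumption~\ref{ass-B-2}, the identity $\rank S_P=\rank S_{P,1}$ established in Lemma~\ref{lem-no-resonance} rules out the exceptional cases (E1) and (E3) from the Pauli classification in Section~\ref{sec-pauli}, since both of these cases require $\rank S_{P,1}<\rank S_P$. Thus only the regular case ($S_P=0$) and the second exceptional case (E2), where $\rank S_{P,1}=\rank S_P\geq 1$, can occur. In particular, zero is exceptional for $H_P$ if and only if $S_P\neq 0$, and in that event the exceptional point is automatically of the second kind, with zero being an eigenvalue of $H_P$ of multiplicity $\rank S_P$. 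This already yields the last assertion of the proposition.

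Next I would translate $S_P\neq 0$ into the statement that $H_P$ has a non-trivial $L^2$ kernel. The Pauli analogue of Lemma~\ref{lem-ker-M0} provides a bijection between $\ker M_{P,0}$ and $\nul(H_P)$, and the argument used in the proof of Lemma~\ref{lem-no-resonance}, together with Lemma~\ref{lem-orthogonal-pauli}, shows that in the purely magnetic case every element of $\nul(H_P)$ in fact lies in $L^2(\R^3;\C^2)$. Hence $S_P\neq 0$ is equivalent to $\ker H_P\cap L^2(\R^3;\C^2)\neq\{0\}$, i.e.\ to zero being an eigenvalue of $H_P$.

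Finally, I would apply the zero-mode results recalled in the paragraph preceding the proposition. Since $\beta>3$, the field $B$ certainly satisfies $\abs{B}\in L^{3/2}(\R^3)$, so the Balinsky--Evans--Lewis implication from~\cite{bel} together with the Benguria--Van den Bosch converse from~\cite{bvb}, in its Frank--Loss refinement~\cite{fl}, gives
\begin{equation*}
\delta(B)=0\ \Longleftrightarrow\ \ker H_P\cap L^2(\R^3;\C^2)\neq\{0\}.
\end{equation*}
Chaining this equivalence with the one from the previous step closes the proof.

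The overall argument is essentially a bookkeeping exercise in the classification table combined with the citations from~\cite{bel,bvb,fl}. The step that I expect to be the main point deserving care is the identification of $S_P\neq 0$ with the existence of an $L^2$ zero eigenfunction, since Lemma~\ref{lem-no-resonance} is not literally stated in that form; extracting it requires re-reading its proof, in which the Hardy--Littlewood--Sobolev inequality and the Frank--Loss theorem are used precisely to upgrade $u=-G_0\idd_2 w_P^{\ast} f$ attached to an arbitrary $f\in\ker M_{P,0}$ to an $L^2$ zero mode of $H_P$.
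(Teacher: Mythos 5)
Your argument is logically sound but takes a genuinely different route from the paper. You reduce the proposition to external citations: Lemma~\ref{lem-no-resonance} rules out the resonance cases (E1) and (E3), so exceptional is equivalent to $\ker H_P\cap L^2\neq\{0\}$, and then you invoke the Balinsky--Evans--Lewis and Benguria--Van den Bosch / Frank--Loss equivalence $\delta(B)=0\Leftrightarrow\ker H_P\cap L^2\neq\{0\}$ to close the loop. The paper, by contrast, regards this proposition as an \emph{independent verification} of that equivalence and gives a self-contained proof that never uses~\cite{bel,bvb,fl}: in the forward direction it takes $u\in\ker(\id+G_0\idd_2 W_P)$, rewrites the equation as $(\id+G_0\idd_2\widetilde W)u=G_0\idd_2\abs{B}u$ with $\widetilde W=W_P+\abs{B}$, inverts, and shows by direct computation (via Corollary~\ref{cor-regular-pauli}) that $f=\abs{B}^{1/2}u$ is an eigenvector of $\sn\sn^*$ with eigenvalue $1$, whence $\delta(B)=0$; in the reverse direction it uses compactness of $\sn^*\sn$ together with Weyl's criterion to upgrade $\delta(B)=0$ to an actual eigenvalue $1$ of $\sn\sn^*$, sets $u=\widetilde H_P^{-1}\abs{B}^{1/2}f$, and verifies $H_Pu=0$ so that $\ker M_{P,0}\neq\{0\}$. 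Lemma~\ref{lem-no-resonance} is only used at the very end to conclude the exceptional point is of the second kind. So your proof buys brevity at the price of importing the main content of the equivalence from the literature, while the paper's proof reproves that equivalence within its own resolvent-expansion framework under the stronger decay hypothesis $\beta>3$ (which is indeed what makes the phrase ``it is illustrative to verify'' appropriate). Both are correct; yours would be a less satisfying proof for the paper's stated purpose, since it leaves nothing to verify.
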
 

\begin{proof}
Using Corollary~\ref{cor-regular-pauli} and $\abs{B(x)}^{\frac12}\lesssim \jap{x}^{-\beta/2}$ with $\beta>3$ we get that
\begin{equation}
\lim_{\eta\downarrow0}\abs{B}^{\frac12}
(\widetilde{H}_P+\eta I)^{-1}\abs{B}^{\frac12}=\abs{B}^{\frac12}
\widetilde{H}_P^{-1}\abs{B}^{\frac12}=\sn\sn^*,
\end{equation}
with convergence in operator norm.
We note that as a consequence the operator $\sn\sn^*$ is compact. But then $\sn^*\sn$ is also compact.

Assume that zero is an exceptional point for $H_P$. Then due to 
Lemmas~\ref{lem-ker-M0} 
and~\ref{lem-no-resonance}, there exists $u\in L^2(\R^3;\C^2)\cap  H^{1, -s}(\R^3;\C^2)$ with $\tfrac 12 < s < \beta -\tfrac 12$, such that 
$(\id + G_0\idd_2 W_P)u=0$, where $W_P= W_1 +\sigma\cdot B$, cf.~\eqref{W1-def}. Let $
\widetilde{W} \coloneqq W_P +\abs{B}$.

Hence
\begin{equation} 
(\id + G_0\idd_2 \widetilde{W})u= G_0\idd_2 \abs{B} u .
\end{equation} 
Since $(\id +G_0\idd_2 \widetilde W )^{-1}$
exists and is bounded on $H^{1,-s}(\R^3;\C^2)$ for any  $\tfrac 12 < s < \beta -\tfrac 12$, it follows that
\begin{equation} \label{eq-u} 
u = (\id + G_0\idd_2 \widetilde W )^{-1} G_0\idd_2  \abs{B} u .
\end{equation}
Let $f =\abs{B}^{\tfrac 12}  u$. Then $f\in L^2(\R^3;\C^2)$, and, in view of Corollary \ref{cor-regular-pauli}, 
\begin{align*} 
\sn\sn^* f &=  \abs{B}^{\tfrac 12}  \bigl[ \bigl(\sigma\cdot (P-A)\bigr)^2 + \abs{B}\bigr]^{-1}  \abs{B}^{\tfrac 12}  f =  \abs{B}^{\tfrac 12}  
(\id + G_0\idd_2 \widetilde W )^{-1}  G_0\idd_2 \abs{B} u \\
&=   \abs{B}^{\tfrac 12}  u = f.
\end{align*}
This shows that $\sn\sn^*$ has eigenvalue $1$, and therefore so does $\sn^* \sn$. Hence $\delta(B)=0$, see~\eqref{delta-B}.

Conversely, assume that $\delta(B)=0$. Then we can find a sequence $g_n\in\cH$ with $\norm{g_n}=1$ such that
\begin{equation*}
\lim_{n\to\infty}\norm{(\id-\sn^*\sn)g_n}=0.
\end{equation*}
It follows from Weyl's criterion that $1$ is in the spectrum  of $\sn^*\sn$. Since this operator is compact, $1$ is an eigenvalue of $\sn^*\sn$, hence also an eigenvalue of $\sn\sn^*$.
Thus there exists 
$f\in  L^2(\R^3;\C^2)$, $\norm{f}=1$, such that
\begin{equation} \label{f-ef}
 \abs{B}^{\tfrac 12} \widetilde{H}_P^{-1}  \abs{B}^{\tfrac 12} f = f.
\end{equation}
Since $ \abs{B}^{\tfrac 12} f\in H^{-1, \frac 32 +0}(\R^3;\C^2)$, it follows from Corollary~\ref{cor-regular-pauli} that 
$$
u \coloneqq \widetilde{H}_P^{-1}   \abs{B}^{\tfrac 12}
 f = (\id + G_0\idd_2 \widetilde{W})^{-1} G_0\idd_2 \abs{B}^{\tfrac 12} f \in  H^{1, -\frac 12 -0}(\R^3;\C^2).
$$
Moreover, from~\eqref{f-ef} we deduce the identity
\begin{align*}
\abs{B}^{\tfrac 12} f &= \abs{B}  
\widetilde{H}_P^{-1}  \abs{B}^{\tfrac 12} f = \abs{B}^{\tfrac 12}  f - H_P u, 
\end{align*} 
which implies $H_P u =0$.
This shows that $u\in \nul H_P$ and therefore $\ker M_0 \neq \{0\}$, cf.~Lemma~\ref{lem-ker-M0}. 

The last statement follows from Lemma~\ref{lem-no-resonance}.
\end{proof}

\begin{rem}
Sharp conditions for the nonexistence of zero energy eigenfunctions of $\sigma\cdot (P-A)$ in terms of $L^p$-norms of $B$ and $A$
were recently established in~\cite{fl,fl2}.
\end{rem}

\section{General perturbations}
\label{sec-general}
The set-up used here applies to a much larger class of perturbations of $-\Delta$ than the perturbations defined in~\eqref{W-def}, and leads to resolvent expansions as those obtained in previous sections.

The idea is to combine the factorization scheme in~\cite{jn} with some of the estimates from~\cite{JK}, extending what was done above. See also the comment on the bottom of page~588 in~\cite{JK}.

In the sequel we use the notation $\cH=L^2(\R^3)$ and $H_0=-\Delta$, with domain $H^{2,0}$.

\begin{assumption}\label{gen-V}
Let $\myW$ be a symmetric $H_0$-form-compact operator on $\cH$. 
Let $\beta>0$. Assume that $\myW$ defines a compact operator in $\B(1,-\beta/2;-1,\beta/2)$, also denoted by $\myW$. 
\end{assumption}

\noindent
Note that Proposition \ref{prop-M0} continues to hold in this abstract setting. In particular, the proof of the fact that $0$ is an isolated point of $\sigma(M_0)$ remains unchanged.

\medskip

\noindent We have the following result. The proof is a variant of the proof of \cite[Proposition~A.1]{IJ}.

\begin{lem}\label{lem-gen-fac}
Let $\myW$ satisfy Assumption~\ref{gen-V} for some $\beta>0$.
Let $\K=\ell^2(\N)$, if $\rank \myW=\infty$. Otherwise, let $\K=\C^{\rank \myW}$. Then there exist a bounded operator $\myw\colon H^{1,-\beta/2}\to\K$ and a self-adjoint and unitary operator
$\myU$ on $\K$ such that
\begin{equation}\label{gen-fac}
\myW=\myw^*\myU \myw.
\end{equation}
\end{lem}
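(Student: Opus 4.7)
The plan is to transfer $\myW$ to a genuine compact self-adjoint operator on a Hilbert space via Riesz duality, apply the spectral theorem, and read off the desired factorisation from the resulting polar-type decomposition.

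First, I would equip $\X := H^{1,-\beta/2}$ with its native Hilbert inner product
$(u,v)_{\X} := \langle \langle x\rangle^{-\beta/2}(1-\Delta)^{1/2}u,\, \langle x\rangle^{-\beta/2}(1-\Delta)^{1/2}v\rangle_{L^2}$,
and identify its antidual, via the extension of the $L^2$ pairing, with $H^{-1,\beta/2}$. Let $R\colon \X \to H^{-1,\beta/2}$ denote the corresponding Riesz isomorphism, characterised by $\langle u, Rv\rangle = (u,v)_{\X}$. Then $\widetilde{\myW} := R^{-1}\myW \in \B(\X)$. The symmetry of $\myW$ as a form on $\cH$ extends by density of $\cS$ to the pairing between $\X$ and $H^{-1,\beta/2}$, and $R^{-1}$ intertwines this pairing with $(\cdot,\cdot)_{\X}$; hence $\widetilde{\myW}$ is self-adjoint on $\X$. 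By Assumption~\ref{gen-V} the operator $\myW$ is compact as a map $\X \to H^{-1,\beta/2}$, and boundedness of $R^{-1}$ then gives compactness of $\widetilde{\myW}$ on $\X$.

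Next I would invoke the spectral theorem for compact self-adjoint operators to obtain nonzero real eigenvalues $\lambda_n\to 0$ and an orthonormal system $\{\phi_n\}$ in $\X$ with
$\widetilde{\myW} u = \sum_n \lambda_n (\phi_n,u)_{\X}\,\phi_n$.
The index $n$ runs over a set of cardinality $\rank \widetilde{\myW} = \rank \myW$, which I identify with the index set of $\K$ from the statement. Writing $\lambda_n = \sigma_n |\lambda_n|$ with $\sigma_n \in \{-1,+1\}$, I set
\begin{equation*}
\myw\colon \X \to \K,\quad (\myw u)_n = |\lambda_n|^{1/2}\,(\phi_n,u)_{\X}, \qquad \myU\in\B(\K),\quad (\myU\xi)_n = \sigma_n \xi_n.
\end{equation*}
Orthonormality of $\{\phi_n\}$ together with $\sup_n |\lambda_n|<\infty$ makes $\myw$ bounded; visibly $\myU^* = \myU$ and $\myU^2 = I_{\K}$, so $\myU$ is a self-adjoint unitary. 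The identity \eqref{gen-fac} then follows by a direct computation of $\langle v, \myw^*\myU\myw u\rangle$ against the spectral expansion of $\widetilde{\myW}$, using that $R\phi_n$ is precisely the element of $H^{-1,\beta/2}$ realising the functional $(\phi_n,\cdot)_{\X}$.

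The main obstacle is not analytic but notational: one must fix conventions for sesquilinearity on $\X$ and for the duality pairing between $H^{1,-\beta/2}$ and $H^{-1,\beta/2}$ so that the symmetry of $\myW$ (inherited from its $\cH$-self-adjoint form) unambiguously translates, through $R^{-1}$, into self-adjointness of $\widetilde{\myW}$ on $\X$. Once this is pinned down, the remainder is the standard square-root factorisation of a compact self-adjoint operator, and the distinction between $\K = \ell^2(\N)$ and $\K = \C^{\rank \myW}$ is automatic from whether the eigenvalue sequence of $\widetilde{\myW}$ is infinite or finite.
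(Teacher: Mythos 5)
Your proposal is correct and takes essentially the same approach as the paper: both transfer $\myW$ to a compact self-adjoint operator on a Hilbert space, apply the spectral theorem, and read off the factorization with $\myU$ the diagonal matrix of signs. The only difference is cosmetic—the paper conjugates by the explicit isometry $\jap{x}^{-\beta/2}\jap{P}\colon H^{1,-\beta/2}\to L^2$ to land on $L^2$, while you work intrinsically on $H^{1,-\beta/2}$ via the Riesz map $R$; these two $\widetilde{\myW}$'s are unitarily equivalent, so the spectral data coincide. Incidentally, your choice of weight $\abs{\lambda_n}^{1/2}$ in $\myw$ is the correct one for the identity $\myw^*\myU\myw=\myW$ (the paper's displayed $\abs{\lambda_j}$ appears to be a slip for $\abs{\lambda_j}^{1/2}$).
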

\begin{proof}
We assume $\myW\neq 0$.
Define 
\begin{equation*}
\widetilde{\myW}=\jap{x}^{\beta/2}\jap{P}^{-1}\myW\jap{P}^{-1}\jap{x}^{\beta/2}.
\end{equation*}
By assumption this operator is compact and self-adjoint on $\cH$.

Let $N=\rank \myW$, and let $\{u_j\mid j=1,2,\ldots,N\}$, be an orthonormal sequence in $\cH$ such that
\begin{equation*}
\widetilde{\myW}=\sum_{j=1}^{N}\lambda_j\ket{u_j}\bra{u_j}.
\end{equation*}
Here $\{\lambda_j\}$ denotes the \emph{non-zero} eigenvalues of $\widetilde{\myW}$, repeated with multiplicity. 

Define $\myU$ on $\K$ as a matrix by
\begin{equation}
(\myU)_{mn}=\begin{cases}
\sgn(\lambda_m), & \text{for $m=n$, $1\leq m\leq N$},\\
0, & \text{otherwise}.
\end{cases}
\end{equation}
Then $\myU$ is self-adjoint and unitary. For $j=1,2,\ldots,N$, define
\begin{equation}
\eta_j=\jap{P}\jap{x}^{-\beta/2}u_j. 
\end{equation}
Then define $\myw\colon H^{1,-\beta/2}\to\K$ by
\begin{equation}
(\myw f)_j=\begin{cases}
\abs{\lambda_j}\ip{\eta_j}{f}, & j=1,2,\ldots, N,\\
0, & j>N,
\end{cases}
\end{equation}
for $f\in H^{1,-\beta/2}$.
With these definitions the factorization \eqref{gen-fac} follows.
\end{proof}

\begin{rem}
In explicit cases, e.g. the magnetic perturbation $W$ in~\eqref{W-def}, there are other factorizations that are `natural'. The same holds for a multiplicative perturbation. On the other hand, in the case of a self-adjoint  finite rank perturbation Lemma~\ref{lem-gen-fac} gives a natural factorization. In any case, due to the uniqueness of coefficients in an asymptotic expansion, the choice of factorization does not matter. However, it may be difficult to see explicitly in concrete examples that two coefficient expressions are equal.
\end{rem}

\begin{rem}
Recall that factored perturbations are additive in the following sense. Let $\myW_j$, $j=1,2$, be perturbations satisfying Assumption~\ref{gen-fac}. Let $\myW_j=\myw_j^*\myU_j\myw_j$, $j=1,2$, be factorizations with intermediate Hilbert spaces $\K_j$, $j=1,2$, and with the mapping properties stated in Lemma~\ref{lem-gen-fac}.

Let $\myW=\myW_1+\myW_2$ and $\K=\K_1\oplus\K_2$. Define
\begin{equation}
\myw=\begin{bmatrix} \myw_1 \\ \myw_2 \end{bmatrix}
\quad\text{and}\quad
\myU=\begin{bmatrix}
\myU_1 & 0 \\ 0  & \myU_2 \end{bmatrix}.
\end{equation} 
Here we use matrix notation for operators on $\K=\K_1\oplus\K_2$.
Then it is straightforward to verify that $\myW$ satisfies Assumption~\ref{gen-fac} and that we have the factorization 
$\myW=\myw^*\myU \myw$, with $\myw$ and $\myU$ having the mapping properties stated in Lemma~\ref{lem-gen-fac}.
\end{rem}

\appendix

\section{Computation of $(m_{P,0}+S_{P,1})^{-1}$}\label{appA}
In this appendix we give some of the details in the computation of 
$(m_{P,0}+S_{P,1})^{-1}$ and its application.
Consider the second part of case (E3), i.e. the vectors $S_P\alpha1$ and
$S_P\beta1$ are linearly independent. Note that we have the orthogonal direct sum decomposition
\begin{equation}\label{direct-sum}
S_P\K_P= S_{P,1}\K_P \oplus (S_P-S_{P,1})\K_P.
\end{equation}
In the case we consider here $\dim (S_P-S_{P,1})\K_P=2$.
It suffices to find the inverse of $m_{P,0}$, considered as a map in $(S_P-S_{P,1})\K_P$. Since $\{ S_P\alpha1,S_P\beta1\}$ is a basis of $(S_P-S_{P,1})\K_P$ this amounts to inverting a $2\times 2$ matrix. 

To simplify the notation we introduce the shorthand notation $a=S_P\alpha1$ and $b=S_P\beta1$. They form a basis (not necessarily orthogonal) of the two dimensional space $\widetilde{\K}=(S_P-S_{P,1})\K_P$, i.e.
$\widetilde{\K}=\myspan(\ket{a},\ket{b})$. To find the inverse of the map $m_{P,0}$ in $\widetilde{\K}$ we first find an expression for the identity, denoted by $I_{\widetilde{\K}}$, in terms of the four rank one operators $\ket{a}\bra{a}$, $\ket{a}\bra{b}$, $\ket{b}\bra{a}$, and $\ket{b}\bra{b}$. The result is
\begin{equation*}
I_{\widetilde{\K}}=
\frac{1}{\norm{a}^2\norm{b}^2
-\abs{\ip{a}{b}}^2}
\Bigl[ \norm{b}^2\ket{a}\bra{a}
- \ip{a}{b}\ket{a}\bra{b}
-\ip{b}{a}\ket{b}\bra{a}
+\norm{a}^2\ket{b}\bra{b}
\Bigr].
\end{equation*}
Next we solve the equation
\begin{equation*}
\bigl(c_1\ket{a}\bra{a}+c_2\ket{a}\bra{b}
+c_3\ket{b}\bra{a}+c_4\ket{b}\bra{b}\bigl)m_{P,0}=I_{\widetilde{\K}}.
\end{equation*}

\noindent The solution is 
\begin{equation*}
(c_1, c_2, c_3, c_4) =\kappa\, \big( \norm{b}^4+\abs{\ip{a}{b}}^2, - (\norm{b}^2+\norm{a}^2) \ip{a}{b}, -( \norm{b}^2+\norm{a}^2)\ip{b}{a}, \norm{a}^4+\abs{\ip{a}{b}}^2 \big),
\end{equation*} 
where $\kappa =  \frac{-4\pi}{(\norm{a}^2\norm{b}^2\
-\abs{\ip{a}{b}}^2)^2}$.
This   implies 
\begin{equation*}
(m_{P,0}\bigl\lvert_{\widetilde{\K}})^{-1} = 
\kappa
\begin{bmatrix}
\ket{a} & \ket{b}
\end{bmatrix}
\begin{bmatrix}
\norm{b}^4+\abs{\ip{a}{b}}^2 & -(\norm{a}^2+\norm{b}^2)\ip{a}{b}
\\
-(\norm{a}^2+\norm{b}^2)\ip{b}{a} & (\norm{a}^4+\abs{\ip{a}{b}}^2)
\end{bmatrix}
\begin{bmatrix}
\bra{a} \\ \bra{b}
\end{bmatrix}\, .
\end{equation*}
The $2\times2$ matrix above is self-adjoint, so it can be diagonalized. This implies that we can find $e,f\in \widetilde{\K}$ such
\begin{equation}\label{mP0-inverse}
(m_{P,0}\bigl\lvert_{\widetilde{\K}})^{-1}=\ket{e}\bra{e}+\ket{f}\bra{f}.
\end{equation}
Due to the decomposition~\eqref{direct-sum} we have
\begin{equation}\label{m0inverse}
(m_{P,0}+S_{P,1})^{-1}=S_{P,1}+ (m_{P,0}\bigl\lvert_{\widetilde{\K}})^{-1}.
\end{equation}
The next step is to use \eqref{mP0-inverse} and \eqref{m0inverse} in \cite[(A.36)]{jn1}.    Following the calculations in \cite[Appendix A]{jn1} we get
\begin{equation}\label{F-1-new}
F_{-1}=P_0W_PG_3\idd_2W_PP_0+\ket{\psi_c^1}\bra{\psi_c^1}
+\ket{\psi_c^2}\bra{\psi_c^2},
\end{equation} 
where $\psi_c^1,\psi_c^2\in\nul(H_P)$ are give by 
\begin{equation} \label{psi-12}
\begin{aligned}
\ket{\psi_c^1} &= \frac{1}{\|e\|^2} \big( P_0W_PG_2\idd_2 w^* \ket{e}\,  -  G_0\idd_2 w^* \ket{e} \big) \\[7pt]
\ket{\psi_c^2} &= \frac{1}{\|f\|^2} \big( P_0W_PG_2\idd_2 w^* \ket{f}\,  -  G_0\idd_2 w^* \ket{f} \big)\, .
\end{aligned}
\end{equation}

The case (E1) with multiplicity two zero resonances can be treated in the same manner. We obtain formulas \eqref{F-1-new} and \eqref{psi-12} with $P_0=0$.

\paragraph*{Acknowledgments.} AJ was partially supported by grant 8021--00084B from Independent Research Fund Denmark \textbar\ Natural Sciences.


\begin{thebibliography}{99}

\bibitem{be} A.~Balinsky and W.~D.~Evans: On the zero modes of Pauli operators. {\em J. Func. Anal.} {\bf 179} (2001) 120--135.

\bibitem{bel} A.~Balinsky, W.~D.~Evans and R.~T.~Lewis: Sobolev, Hardy and CLR inequalities associated with Pauli operators in $\R^3$. {\em J. Phys. A.} {\bf 34} (2001) L19. 


\bibitem{bvb} R.~Benguria and H. Van Den Bosch: A criterion for the existence of zero modes for the Pauli operator with fastly decaying fields. {\em J.~Math.~Phys.} {\bf 56} (2015) pp. 052104

\bibitem{egs} M. B. Erdo\u{g}an, M. Goldberg and W. Schlag:
Strichartz and smoothing estimates for Schr\"{o}dinger
operators with large magnetic potentials in $\R^3$.
\emph{J. Eur. Math. Soc.} \textbf{10} (2008) 507--531.

\bibitem{fl} R.~L.~Frank and M.~Loss: Which magnetic fields support a zero mode? {\em J. Reine Angew. Math.} {\bf 788} (2022) 1--36. 

\bibitem{fl2} R.L.~Frank and M.~Loss: A sharp criterion for zero modes of the Dirac equation. {\em  J. Eur. Math. Soc.}, to appear.   

\bibitem{elt} D.~M.~Elton: The local structure of zero mode producing magnetic potentials. {\em Comm. Math. Phys}. {\bf 229} (2002) 121--139.

\bibitem{IJ} K. Ito and A. Jensen: Resolvent expansion for the Schrödinger operator on a graph with infinite rays. \emph{J. Math. Anal.} \textbf{464} (2018) 616--661.

\bibitem{JK} A. Jensen and T. Kato: Spectral properties of Schr\"odinger operators and time-decay of the wave functions. {\em Duke Math. J.} {\bf 46} (1979) 583--611.

\bibitem{jn} A.~Jensen and G.~Nenciu: A unified approach to resolvent expansions at thresholds. {\em Rev.~Math.~Phys.} {\bf 13} (2001) 717--754.

\bibitem{jn2} A.~Jensen and G.~Nenciu: Erratum: ``A unified approach to resolvent expansions at thresholds''. {\em Rev.~Math.~Phys.} {\bf 16} (2004) 675--677.

\bibitem{jn3} A.~Jensen and G.~Nenciu: Schr\"odinger operators on the half line: Resolvent expansions and the Fermi golden rule at thresholds. {\em Proc. Indian Acad. Sci. (Math. Sci.)} {\bf 116} (2006)
375--392.

\bibitem{jn1} A.~Jensen and G.~Nenciu: The Fermi Golden Rule and its form at thresholds in odd dimensions. {\em Commun. Math. Phys.} {\bf 261} (2006) 693--727.

\bibitem{kk} A.~I. Komech and E.~A. Kopylova: Dispersive decay for the magnetic Schrödinger equation. \emph{J. Funct. Anal.} \textbf{264} (2013)
735--751.

\bibitem{kov1} H.~Kova\v r\'{\i}k: Resolvent expansion and time decay of the wave functions for two-dimensional magnetic Schr\"odinger operators. {\em Comm.~Math.~Phys.} {\bf 337} (2015) 681--726. 

\bibitem{kov2}  H.~Kova\v r\'{\i}k: Spectral properties and time decay of the wave functions of Pauli and Dirac operators in dimension two.  {\em Adv. Math.} {\bf 398} (2022), 108244.


\bibitem{LL} E. H. Lieb and M. Loss, \textit{Analysis. Second edition}. Graduate Studies in Mathematics, 14. American Mathematical Society, Providence, RI, 2001. xxii+346 pp.

\bibitem{ly} M. Loss and H.-T. Yau: Stability of Coulomb systems with magnetic fields. III. Zero
energy bound states of the Pauli operator. {\em  Comm. Math. Phys.} {\bf 104} (1986) 282--290.

\bibitem{mu} M. Murata: Asymptotic expansions in time for solutions of Schr\"odinger-type equations. {\em J. Funct. Anal.} {\bf 49} (1982), 10--56.

\bibitem{si} B. Simon: Kato’s inequality and the comparison of semigroups. {\em J. Funct. Anal.} {\bf 32} (1979), 97--101.

\bibitem{ya} D. R. Yafaev: Scattering by magnetic fields. {\em  St. Petersburg Math. J.} {\bf 17} (2006) 875--895.

\end{thebibliography}
\end{document}